\newtheorem{theorem}{Theorem}[section]
\newtheorem{definition}[theorem]{Definition}
\newtheorem{example}[theorem]{Example}
\newtheorem{remark}[theorem]{Remark}
\newtheorem{note}[theorem]{Note}
\newcommand{\Ref}[1]{(\ref{#1})}
\newcommand{\Real}{\mathbb R}
\newcommand{\Net}{\mathbb N}
\newcommand{\Trop}{\mathbb T}
\newcommand{\one}{\mathbb{1}}
\newcommand{\zero}{\mathbb{0}}
\newcommand{\trop}[1]{\mathcal{#1}}
\newcommand{\tB}{\trop{B}}
\newcommand{\tG}{\trop{G}}
\newcommand{\tH}{\trop{H}}
\newcommand{\tT}{\trop{T}}
\newcommand{\To}{\longrightarrow }
\newcommand{\Hom}{Hom}
\newcommand{\eps}{\varepsilon}
\newcommand{\ep}{\epsilon}
\newcommand{\al}{\alpha}
\newcommand{\bt}{\beta}
\newcommand{\gm}{\gamma}
    \newenvironment{proof}{
    \smallskip
    \noindent\emph{Proof.}}{\hfill\(\Box\)
    \bigskip
    } \fi
\newcommand{\ifdef}[3]{\ifthenelse{\equal{#1}{true}}{#2}{#3}}
\def\gperp{ {\perp \joinrel  \joinrel \joinrel  \perp }}
\def\({\left(}
\def\){\right)}
\def\atangible{almost tangible}
\def\um{I}
\def\bR{\overline R}
\def\nb{\nabla}
\def\Bnb{{\Bnu{\nabla}}}
\def\otB{\overline{\tB}}
\def\oA{A_{\tB}}
\def\trn{{\operatorname{t}}}
\def\tilL{\widetilde{L}}
\def\tidep{tropically independent}
\def\crit{C}
\def\gen{S}
\def\tdep{tropically dependent}
\def\tdepc{tropical dependence}
\newcommand{\Inu}[1]{\widehat{#1}}
\newcommand{\Det}[1]{ \left|{#1}\right|}
\def\Rd{R^\dagger}
\def\nb{\nabla}
\def\Bnb{{\Bnu{\nabla}}}
\def\hnu{\hat \nu}
\newcommand{\Bnu}[1]{\overline{#1}}
\def\Gker{\operatorname{g-ker}}
\def\semiring{semiring}
\def\semirings{semirings}
\def\semiring0{semiring$^\dagger$}
\def\semirings0{semirings$^\dagger$}
\def\domain0{domain$^\dagger$}
\def\domains0{domains$^\dagger$}
\def\semifield0{semifield$^\dagger$}
\def\semifields0{semifields$^\dagger$}
\def\nucong{\cong_\nu}
\def\nug{>_\nu}
\def\nul{<_\nu}
\def\nuge{\ge_\nu}
\newcommand{\etype}[1]{\renewcommand{\labelenumi}{(#1{enumi})}}
\def\eroman{\etype{\roman}}
\def\hnu{\hat{\nu}}
\def\pipeGS{{\underset{\operatorname{\, gs }}{\mid}}}
\def\lmod{\mathrel   \pipeGS \joinrel\joinrel \joinrel =}
\def\lmodh{\lmod}
\def\lmodg{\lmod}
\def\lmodh{\lmod}
\def\nlmodh{\mathrel  \pipeGS \joinrel \! \joinrel \neq}
\def\ggsim{\, \, \curlyvee \,}
\def\gsim{ {\underset{{gd}}{\ggsim }} }
\def\hsim{ {\underset{{gd}}{\ggsim}}}
\def\tilG{\widetilde{G}}
\def\ghost{\operatorname{ghost}}
\def\nb{\nabla}
\def\ealph{\etype{\alph}}
\def\VHmu{(V,\tHz)}
\def\RGznu{(R,\tGz,\nu)}
\def\RGnu{(\Rd,\tG,\nu)}
\def\base{\tB}
\def\hnu{\hat{\nu}}
\def\nb{\nabla}
\def\Skip{\vskip 1.5mm}
\def\pSkip{\vskip 1.5mm \noindent}
\def\tmap{\varphi}
\def\invr{{\operatorname{-1}}}
\def\Ann{{\operatorname{Ann}}}
\def\rad{\operatorname{rad}}
\def\rank{\operatorname{rk}}
\def\diag{\operatorname{diag}}
\def\a{\alpha}
\newtheorem{thm}[theorem]{Theorem}
\newtheorem*{thm*}{Theorem}
\newtheorem{cor}[theorem]{Corollary}
\def\dper{generalized permutation}
\def\Hom{\operatorname{Hom}}
\newtheorem{lem}[theorem]{Lemma}
\newtheorem{rem}[theorem]{Remark}
\newtheorem{prop*}{Proposition}
\newtheorem{prop}[theorem]{Proposition}
\newtheorem{defn}[theorem]{Definition}
\newtheorem*{examp*}{Example}
\newtheorem*{examples*}{Examples}
\newtheorem*{remark*}{Remark}
\newtheorem*{defn*}{Definition}
\def\lfun{\ell}
\def\R{\Real}
\def\tT{\mathcal T}
\def\tTz{\tT_\zero}
\def\Rz{R}
\def\Fz{F}
\numberwithin{equation}{section}
\def\M0{M_{\zero}}
\def\perm{\widetilde{ P}}
\def\SR{R}
\def\tGz{\mathcal G_\zero}
\def\tHz{\mathcal H_\zero}
\def\PS{P}
\def\Sp{S^\gperp}
\def\rzero{\zero_\SR}
\def\rone{{\one_\SR}}
\def\rzero{\zero_\SR}
\def\vzero{\zero_V}
\def\fone{\one_F}
\def\fzero{\zero_F}
\newcommand{\nPS}[1]{\PS_{(!#1)}}
\newcommand{\nPSo}[1]{\nPS{\one}}
\newcommand{\bil}[2]{\langle{#1},{#2}\rangle}
\newcommand{\adj}[1]{\operatorname{adj}({#1})}
\begin{document}


\title[Supertropical linear algebra]
{Supertropical linear algebra}

\author[Z. Izhakian]{Zur Izhakian}
\address{Department of Mathematics, Bar-Ilan University, Ramat-Gan 52900,
Israel}\email{zzur@math.biu.ac.il}

\author[M. Knebusch]{Manfred Knebusch}
\address{Department of Mathematics, University of Regensburg, Regensburg,
Germany} \email{manfred.knebusch@mathematik.uni-regensburg.de}

\author[L. Rowen]{Louis Rowen}
\address{Department of Mathematics, Bar-Ilan University, Ramat-Gan 52900,
Israel} \email{rowen@macs.biu.ac.il}

\thanks{This research of the first and third authors is supported  by the
Israel Science Foundation (grant No.  448/09).}

\thanks{The second author was supported in part by the Gelbart Institute at
Bar-Ilan University, the Minerva Foundation at Tel-Aviv
University, the Mathematics Dept. of Bar-Ilan University, and the
Emmy Noether Institute.}

\thanks{Research on this paper was carried out by the three authors in the Resarch
in Pairs program of the MFO in Oberwohlfach, July 2009.}

\thanks{This research of the first author has been supported  by the
Oberwolfach Leibniz Fellows Programme (OWLF), Mathematisches
Forschungsinstitut Oberwolfach, Germany.}

\subjclass[2010]{Primary 11D09, 15A03, 15A04, 15A15, 15A63, 16Y60;
Secondary 15A33, 20M18, 51M20, 14T05}


\date{\today}


\keywords{Tropical algebra, semimodules and supertropical vector
spaces, linear algebra, change of base semirings, linear and
bilinear forms, Gram matrix.}


\begin{abstract}

 The objective of this paper is to lay out the algebraic
 theory of supertropical vector spaces and linear algebra,
 utilizing the key antisymmetric relation of ``ghost surpasses.''
 Special attention is paid to the various notions of ``base,'' which include
 d-base and s-base, and these are compared to other treatments in the tropical theory. Whereas
  the number of elements in a d-base
may vary according to the d-base, it
 is shown that when
an s-base exists, it is unique up to permutation and
multiplication by scalars, and can be identified with a set of
``critical'' elements.  Linear functionals and the dual space are
also studied, leading to supertropical bilinear forms and a
supertropical version of the Gram matrix, including its connection
to linear dependence, as well as a supertropical version of a
theorem of Artin.
\end{abstract}

\maketitle




\section{Introduction}
\numberwithin{equation}{section}

The objective of this paper is to lay out an algebraic theory for
linear algebra in tropical mathematics. Extending  the max-plus
algebra to the supertropical algebra of
\cite{IzhakianRowen2007SuperTropical} (which was designed as an
algebraic foundation for tropical geometry),  we obtain a theory
paralleling the classical structure theory of commutative
algebras.

Although there already is an extensive literature on tropical
linear algebra over the max-plus algebra, including  linear
dependence~\cite{AGG} and matrix rank~\cite{ABG}, the emphasis
often is  combinatoric or geometric. The traditional approach in
semiring theory is to divide the determinant into a positive and
negative part (since $-1$ need not exist in the semiring),
cf.~\cite{Plus}. Whereas this approach provides many basic
important properties of matrices, such as a general method given
in \cite{AGG} to transfer identities from ring theory to semiring
theory, the reliance on combinatorics also leads to competing (and
different) definitions. For example, in \cite{ABG}, five different
definitions of matrix rank are given: The row rank, the Barvinok
(Shein) rank, the strong rank, the Gondran-Minoux rank, the
symmetrized rank, and the Kapranov rank.

The structure theory of supertropical semirings tends to unify
these notions, giving a single formula for the determinant, from
which we can define a nonsingular matrix; in this approach, the
row rank, column rank, and strong rank all coincide. This makes it
easier to proceed with a traditional algebraic development.
Explicitly, properties of matrices were studied in
\cite{IzhakianRowen2008Matrices},
\cite{IzhakianRowen2009Equations},
\cite{IzhakianRowen2010MatrixIII}, and
\cite{IzhakianRowen2009TropicalRank}, where the main
 theme is to replace the max-plus algebra by a cover, called the
 \textbf{supertropical semiring}, which permits one to formulate stronger results
 which are amenable to proofs more in line with classical matrix theory.  Recall that the underlying
supertropical structure is a semiring (without zero), $R$,~with a
designated semiring ideal $\tG \supseteq mR$ for all $m$, where
$mR$ denotes $a+ \cdots + a$ repeated $m$ times; the algebraic
significance is obtained by interpreting $\tG$ as ``ghost
elements'',  elements which collectively are treated analogously
to a zero element. When convenient, one assumes that $R$ contains
a zero element $\rzero,$ which can be formally adjoined. Thus, we
introduce the fundamental relation $a \lmodg b$ when $a$ equals
$b$ plus a ghost element (which could be $\rzero$).

 We recall that the
\textbf{tropical determinant} of an $n \times n$  matrix $A =
(a_{i,j})$ in $M_n(\Rz )$ is really the permanent, which we denote
as
\begin{equation*}\label{det2}
|A | = \sum _{\pi \in S_n}a_{\pi (1),1} \cdots a_{\pi
(n),n}.\end{equation*}

Although the equation $|AB| = |A||B|$ fails over the max-plus
algebra, the relation $|AB| \lmodg |A||B|$ holds over a
supertropical semiring,
\cite[Theorem~3.5]{IzhakianRowen2008Matrices}, and any matrix
satisfies its characteristic polynomial in the sense of
\cite[Theorem~5.2]{IzhakianRowen2008Matrices}. The roots of this
polynomial are precisely the supertropical eigenvalues.

 Our main objective here is to initiate a formal
theory of supertropical vector spaces and their bases, over
semirings with ghosts, and in particular over supertropical
semifields.

Our method is to rely as far as possible on the structure theory.
While this theory parallels the classical theory of linear
algebra, several key differences do emerge. At the outset, one
major difference is that there are two different kinds of bases.
First, one can take a maximal (tropically) independent set, which
we call a \textbf{d-base}, called a ``basis'' in
\cite[Definition~5.2.4]{MS09}. This has considerable geometric
significance, intuitively providing a notion of rank (although, by
an example in \cite{MS09}, the rank might vary according to the
choice of d-base). As one might expect from
\cite{IzhakianRowen2009Equations}, any dependence among vectors
can be enlarged to an (often unique) \textbf{saturated
dependence}, which is maximal in a certain sense;
cf.~Theorem~\ref{sat1}. This leads to a delicate analysis of rank
of a subspace, especially since it turns out that the number of
elements in different d-bases may differ.

Alternatively, one can consider sets that (tropically) span the
subspace;    an \textbf{s-base} is a minimal such set when it
exists. Such sets are used in generating convex spaces, as studied
in \cite{CG}. Not every d-base is an s-base. In fact, the number
of elements of an s-base might necessarily be larger than the
number of elements of a d-base. Surprisingly, an s-base is unique
up to scalar multiples , and can be characterized in terms of
\textbf{critical} elements, which intuitively are elements that
cannot be decomposed into sums of other elements. On the other
hand, the d-bases can be quite varied, and lead us to interesting
subspaces that they span, which we call \textbf{thick}.

 We also consider linear
transformations in this context, in which the  equality $\tmap
(v+w) = \tmap (v) + \tmap(w)$ is replaced by the ghost surpassing
relation $\tmap (v+w) \lmodh \tmap (v) + \tmap(w)$. Linear
transformations lead us to the notion of the \textbf{dual space}.
The dual space depends on the choice $\tB$ of d-base, but there is
a natural ``dual s-base'' of the dual space of  $\tB$, of the same
rank (Theorem~\ref{dualbasethm}).

In the last section we introduce supertropical bilinear forms, in
order to study ``ghost orthogonality'' between vectors. One
calls two vectors $v$ and $w$
 \textbf{g-orthogonal} with respect to a supertropical bilinear form
$\bil{\phantom{w}}{\phantom{w}}$ when $\bil vw$ is a ghost.
 We construct the \textbf{Gram matrix} and prove the
connection between tropical dependence of vectors in a
nondegenerate space and the singularity of this matrix (Theorem~
\ref{thm:GramMat}). Finally, we prove  (Theorem~\ref{orthogsym}) a
variant of Artin's Theorem: When the g-orthogonality relation is
symmetric, the supertropical bilinear form is ``supertropically
symmetric.''

Since the exposition \cite{MS09} is an excellent source of
fundamental results and examples, we use it as a general reference
for the ``standard'' tropical theory   and  compare several of our
definitions with the definitions given there.

\section{Supertropical structures}

\subsection{Semirings without zero}

 A \textbf{semiring without zero}, which we notate as \semiring0,
is
 a structure $(\R ,+,\cdot, \rone)$ such that $(\R ,\cdot \,
,\rone)$ is a monoid and $(\R ,+)$ is a commutative semigroup,
with distributivity of multiplication over addition on both sides.
(In other words, a \semiring0 does not necessarily have the zero
element $\zero$, but any semiring can also be considered as a
\semiring0.)

The reason one does not initially require a zero element is
twofold: On the one hand, in contrast to ring theory, the zero
element plays at best a marginal role in semirings, because of the
lack of additive inverses, and often gets in the way, requiring
special treatment in definitions and propositions; on the other
hand, in our main example, the max-plus algebra of $\Real$, the
zero element does not exist in $\Real$ but is adjoined formally
(as $-\infty$), and often gets in the way (for example, when one
wants to evaluate Laurent polynomials.) At any rate, given a
\semiring0 $\Rd$, we can formally adjoin the element $\zero $ to
obtain the semiring $\Rz : = \Rd \cup \{ \zero  \},$ where we
stipulate for all $a \in \Rz$:
$$\zero +a = a+ \zero  = a; \qquad \zero a = a\zero  = \zero .$$

A
 \textbf{\semiring0 with ghosts} is a
triple $\RGnu,$ where $\Rd$ is a \semiring0    and $ \tG   $ is a
\semiring0 ideal, called the \textbf{ghost ideal}, together with
an idempotent map
$$\nu : \Rd \To \tG$$  called the \textbf{ghost map on} $\Rd$,
given by $$\nu (a) = a+a.$$ (We require that $\nu$ preserves
multiplication as well as addition.) We write $a^\nu$ for
$\nu(a).$ Thus, $$e:= \rone^\nu$$ is both a multiplicative and
additive idempotent of $\Rd$, which plays a key role since
$\nu(\Rd) = e\Rd.$

 A \textbf{supertropical \semiring0}  has the extra properties:
\begin{enumerate} \ealph
 \item  $a+b   =  a^{\nu } \quad \text{if}\quad a^{\nu } =
 b^{\nu}$; \pSkip
 \item $a+b  \in \{a,b\},\ \forall a,b \in \Rd \ s.t. \  a^{\nu }
\ne b^{\nu }.$

 (Equivalently, $\tG$ is ordered, via $a^\nu \le
b^\nu$ iff $a^\nu + b^\nu = b^\nu$.) \pSkip
\end{enumerate}

We write $a \nug b$ if $a^{\nu } >
 b^{\nu}$; we stipulate that $a$ and
$b$ are $\nu$-{\bf matched}, written $a \nucong b$,  if $a^{\nu }
=
 b^{\nu}$. We say that  $a$  {\bf dominates} $b$ if  $a \nug  b$.

Recall that any commutative supertropical semiring satisfies the
\textbf{Frobenius formula} from
\cite[Remark~1.1]{IzhakianRowen2007SuperTropical}:
\begin{equation}\label{eq:Frobenius} (a+b)^m = a^m + b^m
\end{equation}
for any
    $m \in \Net^+$.

 A \textbf{supertropical \domain0}  \cite{IzhakianRowen2007SuperTropical} is
 a supertropical \semiring0 $\Rd$ for which  $$\tT : = \Rd \setminus \tG$$  is a
 multiplicative monoid, such that the map $\nu
|_\tT : \tT \to \tG$ (defined as the restriction from $\nu$ to
$\tT$) is onto. $\tT$ is  called the set of \textbf{tangible
elements} of $\Rd$.
 A \textbf{supertropical \semifield0} is a
supertropical \domain0 $\RGnu$ in which $\tT$ is a group. Thus,
$\tG$ is also a group. \Skip

We have the analogous definitions when we adjoin the element
$\rzero$ to the \semiring0 $\Rd$ to obtain the \textbf{semiring
with ghosts} $\Rz$. Thus, we write $$\Rz := \Rd\cup \{ \rzero \} =
\RGznu,$$ where  $ \tGz := \tG \cup \{ \rzero \} $ is a semiring
ideal, called the \textbf{ghost ideal}, and the ghost map  $\nu :
\Rz \to \tGz$ satisfies $\nu (\rzero) = \rzero.$ Conversely, given
a semiring with ghosts $ \RGznu,$ we can take $\Rd = \Rz \setminus
\{ \rzero \}$ and $\tG = \tGz \setminus \{ \rzero \}$ and define
the \semiring0 with ghosts $\RGnu.$ Thus, the theories with or
without $\rzero$ are basically the same.

 In this spirit,
we say that $\Rz$ is a
  \textbf{supertropical semiring} when $\Rd$ is a supertropical
  \semiring0, and say that $\Rz$ is a
  \textbf{supertropical domain} when $\Rd$ is a supertropical \domain0; i.e.,  $\tT   = R \setminus \tGz$  is
  the
   monoid  of
 tangible elements. (We
  write $\tTz$ for $\tT \cup
\{ \rzero \} $ in the supertropical domain $\Rz$.) Likewise, a
  \textbf{supertropical semifield}  is a
supertropical domain $\RGznu$ in which $\tT$ is a group.

Intuitively, the tangible elements correspond in some sense to the
original max-plus algebra, although here $a+a = a^\nu$ instead of
$a+a = a.$ Our motivating example of supertropical semifield, used
as the primary example throughout
\cite{IzhakianRowen2007SuperTropical} as well as in this paper, is
the extended tropical semiring \cite{zur05TropicalAlgebra}
$$\Trop := D(\Real) := (\Real
\cup \Real^\nu \cup \{-\infty \} , \Real^{\nu} \cup \{-\infty \},
1_\Real),$$  the most familiar example of a supertropical
semifield whose operations are induced by the standard operations
$\max$ and $+$ over the real numbers; we call this
\textbf{logarithmic notation}, since the zero element
$\zero_\Trop$ is~$-\infty$ and the unit element $\one_\Trop$ is
$0$.

The supertropical domain, and in particular the supertropical
semifield, seem to play a basic role in supertropical algebra
parallel to the role of the field in classical algebra. In
\cite{IzhakianRowen2007SuperTropical} a reduction is given from
supertropical domains  to supertropical semifields. Accordingly,
one is led to study linear algebra over supertropical semifields.

  Occasionally, we also want to pass back from $\tG$ to $\tT.$
Abusing notation slightly, we pick a representative in $\tT$ for
each class  in the image of $\hnu$, thereby getting a function
 $$\hnu: \Rd \to \tT $$ by putting $\hnu |_{\tT} = 1_{\tT};$
 also, by definition,   $\nu \circ ( \hnu |_{\tG} )= 1_{\tG}.$  In this case, we also write $\hat a$ for
$\hnu(a),$ but when the notation becomes cumbersome, we still use
the $\hnu$ notation.

  Here are two reductions to the case  that $\nu_\tT$ is 1:1.

\begin{rem}\label{oneone} We define an equivalence on $R$ via $a\equiv b$ when either $a=b$ or $a,b \in  \tT$ with  $a
\nucong b.$ In other words, two tangible elements are equivalent
iff they are $\nu$-matched. Then we could define the supertropical
\domain0 $$\bR := \Rd /_\equiv $$ to be $ \ (\tT/_\equiv)\cup
\tG.$ The ghost map $\nu$ defines a 1:1 function from the
equivalence classes of $\tT$ to $\tG$. \end{rem}

\begin{rem}\label{onetwo} In \cite[Proposition~1.6]{IzhakianRowen2009Equations}
we see that $\hat\nu$ can be chosen to be multiplicative on $\tG$.
When $\tG$ is a multiplicative group,  define
$$\widetilde T := \hat \nu(\tG) = \{ a \in \tT: \widehat{a^\nu }=
a\},$$  and
$$R' := \widetilde T \cup \tG,$$
and let $\nu'$ be the restriction of $\nu$ to $R'$. Then $(R',
\tG, \nu')$ is a supertropical \domain0, whose tangible elements
are $\widetilde T $, and $\nu'|_{\widetilde T}: \widetilde T \to
\tG$ is 1:1.
\end{rem}

\begin{rem}\label{oneonetwo} When $\nu_\tT$ is not 1:1, it is
convenient to define $$\tT_e := \{ a \in \tT: a \nucong \rone
\}.$$ Note that $\tT_e$ is a submonoid of $\tT,$ and in fact
$\tT_e \cup \{ e\}  $ is a supertropical \domain0\ contained
in~$R$.
\end{rem}


 To clarify our exposition,  most of the examples in this
paper are presented for the extended tropical semiring $D(\Real)$.

\subsection{The  ``ghost
surpass'' and ``ghost dependence'' relations} 

We consider the semiring with ghosts $\RGznu.$

\begin{definition} We say $b$ is \textbf{ghost dependent} on $a$, written $b \gsim a$,  if
$a+b \in \tGz. $ \end{definition}
 \noindent In particular, $a \nucong b$  implies that $a \gsim
b$.

Note that the ghost dependence relation is symmetric, but not
transitive, since $1 \gsim 3^\nu$ and $3^\nu \gsim 2,$ although
$1$ and~$2$ are not ghost dependent. The following antisymmetric
and transitive relation is a key to much of the theory.

\begin{definition} We define the \textbf{ghost
surpasses} relation $\lmodg$ on   $\Rz $, by $$a \lmodg b \qquad
\text{ iff } \qquad a = b + c \quad \text{for some  } \quad c\in
\tGz.$$
\end{definition}
\noindent  In this notation, by writing $a \lmodg \rzero$ we mean
$a \in \tGz$.
This restricts to the \textbf{ghost surpasses} relation on
$\Rd$, by $$a \lmodg b \qquad \text{ iff } \qquad a = b \qquad
\text{or} \qquad a = b + c \quad \text{for some  } \quad c\in
\tG.$$

\begin{rem}\label{rem:ghostSrp} The following are equivalent:

\begin{enumerate}
\item
$a \gsim \rzero$; \pSkip  \item $a \in \tGz$; \pSkip \item  $a
\lmodg \rzero$.
\end{enumerate}
\end{rem}

We quote some easy properties of $\lmodg$ from
\cite{IzhakianRowen2009Equations}:
\begin{rem}  \label{surpass1} $ $
\begin{enumerate}\eroman
  \item (\cite[Remark 1.2]{IzhakianRowen2009Equations}) When    $a$ is tangible, $a \lmodg b$ implies that $a=b.$
In particular, tangible elements are comparable under $\lmodg$ iff
they are equal. In this way, the relation $\lmodg$ generalizes
equality. \pSkip

\item $a \lmodg b$ iff $a=b$ or $a$ is a ghost $\nuge b.$ In
particular, if $a \lmodg b$ then $a \nuge b;$ if $a \lmodg b$ for
$b \in \tGz$, then $a \in \tGz$. \pSkip

\item (\cite[Lemma 1.5]{IzhakianRowen2009Equations}) $\lmodg$ is
an antisymmetric partial order on $R$. \pSkip

 \item If $a \lmodg b$, then $a \gsim b.$

 \end{enumerate}

 \end{rem}

  \begin{lem} \label{surplem} Generalizing Remark~\ref{surpass1}(i),
   for $R$ a supertropical \domain0,
   an element $a \in R$ is tangible iff  the following condition holds:

   $a \lmodg
  b$ implies $b = \a a$ for some $\a \in \tT_e.$
 \end{lem}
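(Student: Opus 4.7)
The plan is to prove the two implications separately: the forward direction is an immediate consequence of Remark~\ref{surpass1}(i), and the reverse direction I will establish by contraposition, using the surjectivity of $\nu|_\tT: \tT \to \tG$ built into the definition of a supertropical \domain0.

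For the forward direction, assume $a$ is tangible and $a \lmodg b$. Remark~\ref{surpass1}(i) forces $a = b$, so $b = \rone \cdot a$, and $\rone \in \tT_e$ because $\rone \in \tT$ (the multiplicative monoid $\tT$ contains the identity $\rone$) and $\rone \nucong \rone$ trivially.

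For the contrapositive of the converse, I assume $a \in \tG$ and exhibit a $b$ witnessing failure of the condition. By surjectivity of $\nu|_\tT$, pick $b \in \tT$ with $b^\nu = a$. Since $a \in \tG = \nu(\Rd)$ and $\nu$ is idempotent, $a^\nu = a$, whence $b \nucong a$; the supertropical axiom~(a) then gives $b + a = b^\nu = a$, so $a \lmodg b$. On the other hand, if $b = \alpha a$ for some $\alpha \in \tT_e$, then $\alpha a \in \tG$ because $\tG$ is a semiring ideal, contradicting $b \in \tT = \Rd \setminus \tG$. This rules out any such $\alpha$ and completes the contrapositive.

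The argument is essentially mechanical once the right $b$ is chosen; the only point requiring any care is the observation that $a^\nu = a$ for $a \in \tG$, needed to invoke axiom~(a), and this is an immediate consequence of the idempotency of $\nu$.
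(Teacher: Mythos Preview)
Your proof is correct and follows essentially the same approach as the paper's own proof: both directions are handled identically, with the forward direction by Remark~\ref{surpass1}(i) and the converse by contraposition, choosing a tangible $\nu$-preimage of $a$ (the paper writes $\hat a$ for your $b$) and observing that it cannot lie in $\tT_e a \subseteq \tG$. Your write-up is somewhat more explicit in justifying $a^\nu = a$ and the relation $a \lmodg b$, but the argument is the same.
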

 \begin{proof} $(\Rightarrow)$ is by Remark~\ref{surpass1}(i).
 Conversely, suppose  $a$ is not tangible; i.e. $a  \in \tG$, so $a=
 a^\nu.$
  Then $a \lmodg \hat a,$ where $\hat a\in \tT$ and  $(\hat a)^\nu = a.$
The condition implies $\hat a = \a a$ for some  $\a \in \tT_e,$
which is impossible since $\a a \in \tG.$
 \end{proof}

This leads us later to a good abstract criterion for tangibility.
Also, conversely to Remark~\ref{surpass1}(iv), we have

 \begin{lem}\label{surpass2} $ $ \begin{enumerate}\eroman
  \item  If $a \gsim b$ with $b\in \tT$,
 then either $a \equiv b$ or  $a \lmodg b$. \pSkip

\item If $a \gsim b$ with $a \nuge b,$
 then either $a \equiv b$ or $a \lmodg b$.  \end{enumerate} \end{lem}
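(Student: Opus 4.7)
The plan is to run a trichotomy on the $\nu$-comparison of $a$ and $b$, using the explicit description of supertropical addition recalled earlier in the excerpt: if $a^\nu = b^\nu$ then $a+b = a^\nu$, and otherwise $a+b$ equals whichever of $a,b$ is $\nu$-dominant. Under the hypothesis $a \gsim b$, i.e.\ $a+b \in \tGz$, each branch of the trichotomy pins down both the tangibility and the $\nu$-behaviour of the dominant element.

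For part~(i), first I would rule out $a \nul b$: that case would give $a+b = b \in \tT$, contradicting $a+b \in \tGz$. When $a \nug b$, we have $a+b = a$, so $a \in \tGz$; since $b \in \tT$ excludes $a = \rzero$ (otherwise $a+b = b$ could not be ghost), in fact $a \in \tG$, and the identity $a = b + a$ exhibits $a \lmodg b$ with $a$ itself serving as the ghost summand. The remaining branch $a \nucong b$ splits on the tangibility of $a$: if $a \in \tT$ then $a \equiv b$ by Remark~\ref{oneone}; if $a \in \tG$ then $a = a^\nu = b^\nu$, and applying axiom~(a) to the $\nu$-matched pair $b, b^\nu$ yields $b + b^\nu = b^\nu = a$, so $a = b + c$ with $c = b^\nu \in \tG$, giving $a \lmodg b$.

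Part~(ii) runs in parallel: the hypothesis $a \nuge b$ now plays the role that $b \in \tT$ played in~(i), excluding $a \nul b$ directly. The strict branch $a \nug b$ goes through exactly as before, delivering $a \lmodg b$. The $\nu$-matched branch $a \nucong b$ reduces to a routine subanalysis by the tangibilities of $a$ and $b$: the diagonal subcases (both tangible; or both ghost, where $a = a^\nu = b^\nu = b$) yield $a \equiv b$ directly, while the mixed subcases use the identity $b + b^\nu = b^\nu$ from axiom~(a) to build the required ghost summand $c = b^\nu$ (respectively $c = a^\nu$).

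The main obstacle I expect is spotting the correct ghost witness $c$ in the $\nu$-matched sub-cases: the relation $a = b+c$ is not visible until one invokes axiom~(a) with the specific pair $(b, b^\nu)$, so that the sum collapses back to $b^\nu = a$. Once this identity is internalised, the remainder is straightforward case checking against the definitions of $\equiv$ and $\lmodg$.
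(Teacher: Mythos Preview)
Your treatment of part~(i) is correct and follows the same route as the paper: a case split on the $\nu$-comparison of $a$ and $b$, together with the tangibility of~$a$.

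Your argument for part~(ii), however, fails in the mixed subcase $a \in \tT$, $b \in \tG$, $a \nucong b$. You propose the ghost witness $c = a^\nu$, but then $b + c = b + a^\nu$; since $b \in \tG$ gives $b = b^\nu = a^\nu$, this sum is $a^\nu + a^\nu = a^\nu \in \tG$, which cannot equal the tangible element~$a$. Thus $a \lmodg b$ fails; and $a \equiv b$ also fails, because the relation $\equiv$ of Remark~\ref{oneone} requires either literal equality or that both elements be tangible. Concretely: take any tangible $a$ and set $b := a^\nu$. Then $a+b = a^\nu \in \tG$ so $a \gsim b$, and $a \nucong b$ so certainly $a \nuge b$, yet neither alternative in the conclusion holds. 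This subcase is therefore not a gap you can patch --- it is a counterexample to part~(ii) as stated. The paper's own proof elides the same case: its final clause ``and thus $a = b$, since $\nu|_\tT$ is assumed to be 1:1'' silently uses $b \in \tT$, which is a hypothesis only in~(i). Part~(ii) evidently needs an additional assumption (for instance $b \in \tT$, or $a \notin \tT$) or a weakened conclusion.
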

 \begin{proof} (For both parts) If $a \in \tG$ with $a \nuge
 b,$ then $a = b+a.$ So we are done unless $a \in \tT$, which implies $a \nucong b$, and thus
 $a = b$, since $\nu |_\tT$ is assumed to be 1:1.
 \end{proof}

\subsection{Vector spaces with ghosts}
Modules over semirings (often called ``semimodules'' in the
literature \cite{qtheory}, or sometimes ``cones'') are defined
just as modules over rings, except that now the additive structure
is that of a semigroup instead of a group. (Note that subtraction
does not enter into the other axioms of a module over a ring.)

\begin{defn}\label{def:module0} Suppose $R$ is a semiring.
An \textbf{$R$-module} $V$ is a semigroup $(V,+,\vzero)$ together
with scalar multiplication $R\times V \to V$ satisfying the
following properties for all $r_i \in R$ and $v,w \in V$:
\begin{enumerate}
    \item $r(v + w) = rv + rw;$ \pSkip
    \item $(r_1+r_2)v = r_1v + r_2 v;$ \pSkip
    \item $(r_1r_2)v = r_1 (r_2 v);$ \pSkip
    \item $\one_R v =v;$ \pSkip
    \item $r \zero_V = \zero_V$; \pSkip
  \item $  \rzero v = \zero_V.$
   \end{enumerate}
 \end{defn}

\begin{note} One could also define module over a \semiring0, by deleting Axiom (6).
In the other direction, any module $V$ over a \semiring0 $\Rd$
becomes an $\Rz$-module when we formally define $  \rzero v =
\zero_V$ for each $v\in V.$
\end{note}

The reason we prefer the terminology ``module'' is that this
definition of module over a semiring~$R$ coincides with the usual
definition of module when $R$ is a ring, since $-v = (-\one_R)v .$
In case the underlying semiring is a semiring with ghosts, $V$ has
the distinguished submodule $e V,$ as well as the ghost map $\nu:
V \to e V,$ given by $$\nu(v) := v+v = (\rone + \rone) v = e v\in
e V.$$

\begin{lem} Any $R$-module   $V$ over a semiring
with ghosts $R$
 satisfies the following properties
 for all $r\in R,$ $v\in V:$
\begin{enumerate}
    \item $(rv)^\nu =r  v^\nu= r^\nu v$; \pSkip
    \item $(v+w)^\nu =v^\nu + w^\nu$. \pSkip
 \end{enumerate}\end{lem}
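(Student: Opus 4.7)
The plan is a direct computation using only the module axioms and the defining identity $v^\nu = v + v = ev$, where $e = \rone + \rone$. No subtle structural fact is needed; the whole content of the lemma is that the ghost map on $V$ is compatible with the ghost map on $R$ and with addition, and this compatibility is forced by distributivity.

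For part (1), I would first expand
\[
(rv)^\nu \;=\; rv + rv \;=\; (r+r)v \;=\; r^\nu v,
\]
using Axiom (2) of Definition~\ref{def:module0} (right distributivity over $+$ in $R$). Next I would expand
\[
r v^\nu \;=\; r(v+v) \;=\; rv + rv \;=\; r^\nu v,
\]
using Axiom (1) (left distributivity) followed again by Axiom (2). Putting the two lines together gives the chain $(rv)^\nu = r^\nu v = r v^\nu$. (One can alternatively phrase this via $v^\nu = ev$ and note that $r^\nu = re = er$ in $R$, so that Axiom (3) yields $rv^\nu = r(ev) = (re)v = r^\nu v$; this is a cleaner packaging but requires the same ingredients.)

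For part (2), I would simply observe
\[
(v+w)^\nu \;=\; e(v+w) \;=\; ev + ew \;=\; v^\nu + w^\nu,
\]
where the middle equality is Axiom (1). This is the cleanest route, because it avoids any appeal to commutativity of the semigroup $(V,+)$ in rearranging $(v+w)+(v+w)$; the distributive axiom does all the work.

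The main obstacle, such as it is, would be psychological rather than mathematical: one must resist the urge to write $(v+w)+(v+w) = (v+v)+(w+w)$ by rearranging, since Definition~\ref{def:module0} only asserts that $(V,+)$ is a semigroup. Going through the scalar $e$ sidesteps this issue entirely, since Axiom (1) delivers the splitting without any rearrangement. Once that observation is made, both statements are immediate.
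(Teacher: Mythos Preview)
Your proof is correct and essentially the same as the paper's: for part (2) it is verbatim the paper's argument via $e(v+w)=ev+ew$, and for part (1) the paper uses precisely the $e$-based chain $(rv)^\nu = e(rv) = (er)v = (re)v = r(ev) = rv^\nu$ that you offer as your alternative packaging, passing through $(er)v = r^\nu v$ along the way.
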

 \begin{proof} (1) $(rv)^\nu = e( r v) = (e r) v = (r e ) v =  r (e v) = r v^\nu.$

 (2) $(v+w)^\nu = e(v +w)=   e v + e w = v^\nu +
 w^\nu$.\end{proof}

In order to obtain a stronger version of supertropicality we
introduce the following definition:

\begin{defn}\label{def:module} Suppose  $R = \RGznu$ is a semiring  with
ghosts. An \textbf{$R$-module with ghosts} $\VHmu$  is comprised
of an $R$-module $V$ and an R-submodule $ \tHz \supseteq e V $
satisfying the axiom:
\begin{align*}
 & \quad   v^\nu = w^\nu \quad \text{implies} \quad v+w =
v^\nu, \quad \forall v,w \in V .
\end{align*}

We call  $ \tHz$ the  \textbf{ghost submodule} of $V$, and $\nu$
is called the \textbf{ghost map on} $V$.
\end{defn}
We define the map $\nu: V \to \tHz,$ given by $\nu(v) := v+v = e
v$,  and write $v^\nu$ for $\nu(v)$.

The choice of the ghost submodule can be significant. (Note that
$v^\nu$ could differ from $v$ even when $v \in \tHz$.)  The
\textbf{standard ghost submodule} of $V$ is defined as $e V.$  Any
module over a supertropical semiring can be viewed as a module
with ghosts with respect to the standard ghost submodule $e V$; in
this case, we suppress $\tHz$ in the notation.

\begin{defn}\label{tropmod}
An \textbf{$R$-submodule with ghosts} of $(V,\tHz)$ is a submodule
$W$ of $V$, endowed with the ghost submodule $ W\cap \tHz,$ whose
ghost map is the restriction of $\nu$ to $W$.
\end{defn}

  When $R$ is a supertropical semifield, $(V,\tHz)$ is
called a \textbf{(supertropical) vector space} over $R$, or vector
space, for short. We focus on vector spaces in this paper, and
call their elements \textbf{vectors}. A more general investigation
of modules with ghosts is given in
\cite{IzhakianKnebuschRowen2010Modules}.  Our main example of a
vector space in this paper, as well as in
\cite{IzhakianRowen2008Matrices}, is $\Rz ^{(n)}=(\Rz ^{(n)},
\tGz^{(n)})$, whose ghost map acts as $\nu$ on each component. The
zero element $\zero$ of~$\Rz ^{(n)}$ is $(\rzero, \dots,\rzero)$.
 The   tangible vectors   of $\Rz  ^{(n)}$ are     those
$(v_1, \dots, v_n)$ such that each $v_i \in \tTz$.
%
  %




 As with semirings with ghosts,  we define  the
\textbf{ghost surpassing relation $\lmodh$}  for vectors $v,w \in
V$ by:
    $$v \lmodh w \quad \text{  if  } \quad v = w + u \ \text{for some}    \ u  \in
 \tHz.$$

 We say that two vectors $v,w \in V$ are
$\nu$-{\bf matched}, written $v \nucong w$,  if $v^{\nu } =
w^{\nu}$. Likewise, we write $v \nuge w$ if $v^{\nu } = w^{\nu} +
x^\nu$ for some $x^\nu \in \tHz.$

\begin{example}
\begin{equation}\label{value} (v_1, \dots, v_n) \ge _\nu (w_1,
\dots, w_n)\end{equation} in $\Rz ^{(n)}$ iff $v_i \ge _\nu w_i$
for each $1 \le i \le n.$
\end{example}

Also, for elements $v ,w $ in a module with ghosts, we define
 $$v \hsim w \quad \text{ if }  \quad v +w \in \tHz.$$

 \begin{rem}\label{tang2} $ $
 \begin{enumerate}\eroman

 \item If $v\lmodh w$, then $v +w \in \tHz$, i.e., $v \hsim w$.
\item If $v_i  \lmodh w$ for $i = 1,2,$ then $v_1+v_2  \lmodh w$.

\end{enumerate}\end{rem}

\begin{lem}\label{MG1} Any module with ghosts $(V,\tHz)$ satisfies the following property,
for all $v,w\in V,$ $h\in eV:$
  $$ v=w+h \ \Rightarrow \ v+h=v, $$
\end{lem}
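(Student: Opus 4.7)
The plan is to reduce the statement to the fact that every element of the standard ghost submodule $eV$ is additively idempotent, and then do a one-line rewrite.

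First I would observe that for any $h\in eV$, we have $h+h=h$. Indeed, $h = eu$ for some $u\in V$, so
\[
h+h \;=\; eu+eu \;=\; (e+e)u \;=\; eu \;=\; h,
\]
since $e=\rone^\nu$ is an additive idempotent of $R$ (equivalently, $\nu$ is an idempotent map, so $e+e = \nu(\nu(\rone)) = \nu(\rone) = e$). Alternatively, one can invoke the defining axiom of Definition~\ref{def:module}: since trivially $h^\nu = h^\nu$, the axiom gives $h+h = h^\nu = eh = e(eu) = eu = h$.

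Given this, the conclusion is immediate by associativity of the semigroup operation on $V$: substituting $v = w+h$,
\[
v+h \;=\; (w+h)+h \;=\; w+(h+h) \;=\; w+h \;=\; v.
\]

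There is really no main obstacle here; the only thing one needs is that $eV$ consists of additive idempotents, which is either a direct consequence of $e$ being an additive idempotent in $R$, or of the axiom defining a module with ghosts applied to $h$ with itself.
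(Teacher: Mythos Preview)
Your proof is correct and follows the same approach as the paper's: the paper's proof is the single line $v = w+h = w+h+h = v+h$, which is exactly your final display, with the step $h=h+h$ left implicit. You simply made that idempotence step explicit.
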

\begin{proof} $v =  w+h = w+h+h  =   v+h.$ \end{proof}

\begin{prop}\label{MG10} Any module with ghosts $(V,\tHz)$ satisfies the following property, for all $v,w\in V,$
$h_1,h_2\in \tHz:$
 $$ v+h_1+h_2=v \ \Rightarrow \ v+h_2=v, $$
\end{prop}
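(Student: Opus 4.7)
The plan is to reduce the statement to an application of Lemma~\ref{MG1}, which handles the case where the ghost being absorbed lies in the smaller submodule $eV$. The bridge from $\tHz$ to $eV$ is provided by the ``doubling'' operation $h \mapsto h + h = h^\nu = eh$, which sends any $h \in \tHz$ into $eV$ where the idempotence $eh + eh = eh$ is available.

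Concretely, I first add $h_1 + h_2$ to both sides of the hypothesis $v + h_1 + h_2 = v$. Using $h_i + h_i = h_i^\nu = eh_i$, this yields
\[ v + eh_1 + eh_2 = v. \]
Since $eh_2 \in eV$, I then invoke Lemma~\ref{MG1} with $w := v + eh_1$ and $h := eh_2$ to conclude $v + eh_2 = v$.

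The remaining task is to upgrade absorption of $eh_2$ into absorption of $h_2$ itself. The key observation is that $h_2$ and $eh_2$ are $\nu$-matched: $(h_2)^\nu = eh_2 = (eh_2)^\nu$, using $e^2 = e$. The defining axiom of a module with ghosts, applied to the pair $h_2, eh_2$, then forces
\[ h_2 + eh_2 = (h_2)^\nu = eh_2, \]
and adding $h_2$ to both sides of $v + eh_2 = v$ produces
\[ v + h_2 = v + h_2 + eh_2 = v + (h_2 + eh_2) = v + eh_2 = v, \]
as required.

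I expect the main subtlety to be exactly this last descent. When $\tHz$ strictly exceeds $eV$, a typical element of $\tHz$ need not be additively idempotent, so one cannot apply Lemma~\ref{MG1} directly with $h = h_2$. The essential role of the module-with-ghosts axiom $v^\nu = w^\nu \Rightarrow v+w = v^\nu$ in the argument is precisely to bridge this discrepancy, by identifying $h_2 + eh_2$ with $eh_2$ and thereby closing the gap between the easy case $h_2 \in eV$ and the general case $h_2 \in \tHz$.
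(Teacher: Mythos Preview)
Your proof is correct and follows essentially the same route as the paper: both add $h_1+h_2$ to the hypothesis to obtain $v = v + eh_1 + eh_2$, invoke Lemma~\ref{MG1} with $h = eh_2 \in eV$ to get $v + eh_2 = v$, and then pass from $eh_2$ back to $h_2$. Your final descent step, using the module-with-ghosts axiom to show $h_2 + eh_2 = (h_2)^\nu = eh_2$, makes explicit the justification that the paper's compressed chain $v + eh_2 = (v+h_1+h_2)+h_2 = v+h_2$ leaves to the reader.
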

\begin{proof} $v =  v+h_1+h_2= (v+h_1+h_2) + (h_1+h_2) = v+eh_1 + eh_2.$ Take $w = v+eh_1$ and $h= eh_2$ in the lemma
to get $ v =  v+eh_2 = ( v+h_1+h_2) + h_2 = v+h_2  .$ \end{proof}

\begin{cor}\label{MG11}
The ghost surpassing relation is always antisymmetric.
\end{cor}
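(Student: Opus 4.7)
The plan is to derive antisymmetry of $\lmodh$ as a direct consequence of Proposition~\ref{MG10}. Suppose $v \lmodh w$ and $w \lmodh v$. By the definition of $\lmodh$, there exist $h_1, h_2 \in \tHz$ such that $v = w + h_1$ and $w = v + h_2$. Substituting the second equation into the first yields $v = v + h_2 + h_1$.

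At this point I would invoke Proposition~\ref{MG10} with the roles of $h_1$ and $h_2$ chosen so that the conclusion gives back $w$. Since $v = v + h_1 + h_2$, the proposition immediately implies $v + h_2 = v$. But $v + h_2 = w$ by our second hypothesis, so $w = v$, which is exactly antisymmetry.

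The step I expect to be the only nontrivial one is checking that the hypotheses of Proposition~\ref{MG10} apply; this is immediate because $h_1, h_2$ are by assumption elements of $\tHz$, which is the setting in which that proposition is stated. No further use of the supertropical structure or the definition of $\tHz$ is needed beyond what is already packaged into Proposition~\ref{MG10} (which in turn rests on Lemma~\ref{MG1}). Thus the corollary follows in essentially one line once the preceding proposition is in hand.
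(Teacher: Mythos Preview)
Your proof is correct and is exactly the intended argument: the paper states the corollary immediately after Proposition~\ref{MG10} without further proof, and your derivation (write $v=w+h_1$, $w=v+h_2$, substitute to get $v=v+h_1+h_2$, then apply Proposition~\ref{MG10} to obtain $v+h_2=v$, hence $w=v$) is precisely how one unpacks it.
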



Motivated by Lemma~\ref{surplem}, we have an abstract definition
of tangibility for any vector space over a supertropical semifield
(which is used more generally in
\cite{IzhakianKnebuschRowen2010Modules} for any module over a
supertropical \semiring0):

 \begin{defn} The  \textbf{\atangible\ vectors}  of $V$ are     those
 elements $v\in V$ for which $v \lmodg w$ implies $w \in \tT_e v$,
 $\forall w \in V.$
\end{defn}

\begin{rem}\label{notghost} A nonzero ghost vector $v$ cannot be \atangible, for we  always have
$$v = \bigg(\rone + \frac \rone 2   \bigg)v = v + \frac \rone 2  v \lmodg \frac \rone 2
  v.$$
\end{rem}

\begin{example} Clearly, almost tangible vectors in  $\Rz  ^{(n)}$ are tangible.

On the other hand, in logarithmic notation, taking $R = D(\Real),$
if $V$ is the submodule of $R^{(2)}$ spanned by the the vectors
$v_1 = (1,1^\nu)$ and $v_2 = (0, 1),$ then one sees without
difficulty that $v_1$ is \atangible\ in $V$, although not tangible
in $R^{(2)}$.

In fact, a
 submodule of $\Rz  ^{(n)}$ need not have any tangible vectors
 at all, as exemplified
 by the submodule $R(1, 1^\nu)$ of $R^{(2)}.$
\end{example}

\begin{example}\label{tangvec} For vectors $v = (v_1, \dots, v_n)$ and $w = (w_1,
 \dots, w_n)$ in $\Rz  ^{(n)}$,  $v \lmodh w$ iff
 $v_i \lmodg w_i$ for all $i = 1, \dots, n.$ Thus, checking components, we see that the ghost surpassing relation for vectors
 of $\Rz  ^{(n)}$ is antisymmetric.
\end{example}

Here is another useful property of vectors in $\Rz  ^{(n)}$.

\begin{lem}\label{sumofsat}
If $ v\lmodh \sum _{i=1}^\ell \a_i w_i $ and $v\lmodh \sum
_{i=1}^\ell \a'_i w_i $ in $\Rz  ^{(n)}$, then $v\lmodh  \sum
_{i=1}^\ell \widehat{(\a_i +\a'_i)}  w_i $.
\end{lem}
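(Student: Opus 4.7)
The natural first step is to reduce to a one-dimensional claim and then manipulate the defining identities for $\lmodg$. By Example~\ref{tangvec}, the relation $\lmodh$ on $R^{(n)}$ (with its standard ghost submodule $\tGz^{(n)}$) is exactly the coordinate-wise $\lmodg$ on $R$. Since both the hypothesis and the conclusion are coordinate-wise, it suffices to prove the scalar analog: if $v \lmodg \sum_i \a_i w_i$ and $v \lmodg \sum_i \a'_i w_i$ with $v, w_i \in R$, then $v \lmodg \sum_i \widehat{\a_i+\a'_i}\, w_i$.

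Unpacking the hypotheses, I write $v = \sum \a_i w_i + g_1$ and $v = \sum \a'_i w_i + g_2$ for some $g_1, g_2 \in \tGz$. Adding and using $v+v = v^\nu$ yields $v^\nu = \sum (\a_i+\a'_i)\, w_i + (g_1+g_2)$. The second ingredient is that, by the defining properties of $\hnu$, the tangible element $\widehat{\a_i+\a'_i}$ is always $\nu$-matched with $\a_i+\a'_i$: it coincides with $\a_i+\a'_i$ when this is tangible, and satisfies $(\widehat{\a_i+\a'_i})^\nu = \a_i+\a'_i$ when this is a ghost. In either case $(\a_i+\a'_i)\, w_i \lmodg \widehat{\a_i+\a'_i}\, w_i$ (equality in the first case; in the second, $(\a_i+\a'_i)\, w_i$ is the $\nu$-double of $\widehat{\a_i+\a'_i}\, w_i$). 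Summing, and using that $\tGz$ is closed under addition, produces $\sum(\a_i+\a'_i)\, w_i \lmodg \sum \widehat{\a_i+\a'_i}\, w_i$. Substituting back gives $v^\nu \lmodg \sum \widehat{\a_i+\a'_i}\, w_i$.

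The remaining and most delicate step is to sharpen $v^\nu \lmodg \sum \widehat{\a_i+\a'_i}\, w_i$ to $v \lmodg \sum \widehat{\a_i+\a'_i}\, w_i$ itself. My plan is to reintroduce one of the original identities, say $v = \sum \a_i w_i + g_1$, and invoke Lemma~\ref{MG1} (or Proposition~\ref{MG10}) to absorb the extraneous ghost summand into the ``gap'' separating $v$ from $\sum \widehat{\a_i+\a'_i}\, w_i$. Because one cannot cancel in a \semiring0, success hinges on tracking exactly which ghost contribution--from $g_1+g_2$, or from the replacement of each $\a_i+\a'_i$ by $\widehat{\a_i+\a'_i}$--ends up on which side, so that the final expression takes the precise form ``$v$ equals $\sum \widehat{\a_i+\a'_i}\, w_i$ plus a single ghost'' demanded by the definition of $\lmodg$. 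This bookkeeping is the technical heart of the argument; once it is carried out, antisymmetry of $\lmodg$ (Corollary~\ref{MG11}) closes the proof.
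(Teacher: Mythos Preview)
Your reduction to $n=1$ is exactly what the paper does; the paper then simply declares the remaining scalar assertion ``immediate.'' You went further and correctly isolated the crux: passing from $v^\nu \lmodg \sum_i \widehat{\a_i+\a'_i}\,w_i$ back to $v \lmodg \sum_i \widehat{\a_i+\a'_i}\,w_i$. Unfortunately this sharpening step cannot be carried out, because the statement as written is false.

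Here is a concrete failure already for $n=1$ over $D(\Real)$ (logarithmic notation). Take $\ell=2$, $w_1=w_2=0$, $\a_1=2$, $\a_2=1$, $\a'_1=1$, $\a'_2=2$, and $v=2$. Then $\sum_i\a_iw_i = 2+1 = 2 = v$ and $\sum_i\a'_iw_i = 1+2 = 2 = v$, so both hypotheses hold trivially. But $\widehat{\a_1+\a'_1}=\widehat{\a_2+\a'_2}=2$, hence $\sum_i\widehat{\a_i+\a'_i}\,w_i = 2+2 = 2^\nu$, and $2\not\lmodg 2^\nu$ since $2$ is tangible while $2^\nu+g\in\tGz$ for every $g\in\tGz$. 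The paper's own Remark~\ref{rmk:tnrSpan} exhibits the same phenomenon in $D(\Real)^{(2)}$: with $v=(1,3^\nu)$, $w_1=(1,2)$, $w_2=(1,3)$, $(\a_1,\a_2)=(0,-\infty)$, $(\a'_1,\a'_2)=(-\infty,0)$, one has $v\lmodh w_1$ and $v\lmodh w_2$ but $v\not\lmodh w_1+w_2 = (1^\nu,3)$. So your ``bookkeeping'' cannot be completed; what you flagged as the delicate step is in fact a genuine obstruction that the paper's one-word proof overlooks.
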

\begin{proof} Checking components, we may assume that $n=1$. But then the assertion is immediate.
\end{proof}

\section{Background from matrices}\label{ses:matrices}

 Any set $S = \{v_1, \dots,
v_m\}$ of $m$ row vectors in $\Rz  ^{(n)}$ corresponds to an $m
\times n $ matrix $A(S),$ whose $m$~rows are the vectors of $S$.
We call $A(S)$ the \textbf{matrix} of $S$.

We defined $|A|$ in the introduction. We say that the matrix $A$
is \textbf{nonsingular} if $| A |$ is tangible (and thus
quasi-invertible when $R$ is a supertropical semifield
\cite{IzhakianRowen2008Matrices}); otherwise,  $| A | \in \tGz$
(i.e., $|A| \lmodg \fzero$ by Remark \ref{rem:ghostSrp}) and we
say that $A$ is~ \textbf{singular}. In
\cite{IzhakianRowen2008Matrices}, we also defined vectors in $$\Rz
^{(n)}$$ to be \textbf{tropically independent} if no  linear
combination with tangible coefficients is in $\tHz$. By
\cite[Theorem 3.4]{IzhakianRowen2009TropicalRank}, when $R$ is a
supertropical domain,  $A(S)$ has $m$ tropically independent rows
iff $A(S)$ has a nonsingular $m\times m$ submatrix. Thus, it is
natural to try to understand linear algebra in terms of the
supertropical matrix theory of
\cite{IzhakianRowen2008Matrices,IzhakianRowen2009Equations}.

Although it was shown in \cite{IzhakianRowen2008Matrices} that the
product of nonsingular matrices could be singular, we do have the
consolation that the product of nonsingular matrices cannot be
ghost, cf.~Theorem \ref{prdnonsng} below.

 Recall that
a \textbf{quasi-identity} matrix is a nonsingular,
multiplicatively idempotent matrix ghost-surpassing the identity
matrix.  Suppose $A = (a_{i,j}),$ with $\Det{A}$ invertible in
$R$. In \cite[Theorem~2.8]{IzhakianRowen2009Equations}   one
defines the matrix
\begin{equation}\label{eq:Anb}
 A^\nb := \frac \rone {\Det{A}} \adj{A} ,
\end{equation} and obtains the quasi-identity matrices
\begin{equation}\label{eq:IA}
 \um_A = A A^\nb   
; \qquad \um'_A = A^\nb  A. 
\end{equation}

\subsection{Annihilators of matrices}

\begin{defn}
A vector $v\in \Rz  ^{(n)}$ (written as a column)
\textbf{g-annihilates} an $m \times n $ matrix $A$ if $Av \lmodh
\vzero$ in~$\Rz  ^{(n)}$.
Define $$\Ann (A) = \bigg \{ v \in \Rz  ^{(n)}: Av \lmodh \vzero
\bigg \},$$ clearly a submodule of $\Rz ^{(n)}$.\end{defn}
Accordingly, $\tGz^{(n)} \subseteq \Ann (A) $, for any $m \times n
$ matrix $A$.

\begin{rem} $ $
\begin{enumerate} \eroman

    \item
The point of this definition is that the vector
 $v = ( \bt_1, \dots, \bt_m)$ g-annihilates $(A(S))^{\operatorname{t}}$, the transpose of the  matrix of~
  $S = \{w_1, \dots, w_m\},$ iff $\sum _{i=1}^m \bt_i w_i \lmodh \rzero.$
  Thus, tangible g-annihilators correspond to tropical dependence relations.
  \pSkip

\item A (nonzero) tangible vector cannot g-annihilate a
nonsingular matrix,  since the columns are tropically independent.

\end{enumerate}

\end{rem}

We can improve this result, to include vectors that are not
necessarily tangible.

\begin{lem}\label{diag} The diagonal of the product $I_A I_B$ of quasi-identity matrices $I_A,I_B$ cannot all be  ghosts. \end{lem}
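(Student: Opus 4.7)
The plan is to argue by contradiction and use a cycle-chasing argument driven by the structural properties of quasi-identity matrices. First I would unpack what ``quasi-identity'' forces on the entries: since $I_A \lmodg I$ with $|I_A|$ tangible, the identity permutation must supply the unique $\nu$-maximal contribution to the permanent $|I_A|$. This forces $(I_A)_{ii}=\rone$ for each $i$, forces every off-diagonal entry to lie in $\tGz$, and, critically, forces every cycle product $(I_A)_{i_1 i_2}(I_A)_{i_2 i_3}\cdots (I_A)_{i_m i_1}$ along distinct indices $i_1,\dots,i_m$ with $m\ge 2$ to be strictly $\nu$-smaller than $\rone$; otherwise the corresponding cycle permutation would contribute a ghost dominating the $\rone$ from the identity permutation and make $|I_A|$ a ghost. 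The same holds for $I_B$.

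With this in hand I would compute the diagonal entries of the product,
\begin{equation*}
(I_A I_B)_{ii} \;=\; \rone \;+\; \sum_{k \ne i}(I_A)_{ik}(I_B)_{ki},
\end{equation*}
and observe that the sum $s_i:=\sum_{k\ne i}(I_A)_{ik}(I_B)_{ki}$ lies in $\tGz$. Thus $(I_A I_B)_{ii}$ is a ghost iff $s_i\nuge \rone$, which by the supertropical rule for adding ghosts happens iff some single summand satisfies $(I_A)_{i,k(i)}(I_B)_{k(i),i}\nuge \rone$ for some index $k(i)\ne i$.

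Suppose, toward a contradiction, that every diagonal entry of $I_A I_B$ is ghost. Then $i\mapsto k(i)$ defines a self-map of $\{1,\dots,n\}$ with no fixed points, so iterating it produces a cycle $j_1\mapsto j_2\mapsto\cdots\mapsto j_m\mapsto j_1$ of distinct indices of length $m\ge 2$. Multiplying the individual inequalities around this cycle and regrouping the factors yields
\begin{equation*}
\Bigl(\prod_{l=1}^{m}(I_A)_{j_l,j_{l+1}}\Bigr)\Bigl(\prod_{l=1}^{m}(I_B)_{j_{l+1},j_l}\Bigr) \;\nuge\; \rone,
\end{equation*}
where the first factor is a cycle product in $I_A$ and the second is a cycle product in $I_B$ (traversing the same cycle in opposite directions). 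By the first step each factor is strictly $\nu$-smaller than $\rone$, so the whole left-hand side is strictly $\nu$-smaller than $\rone$, contradicting $\nuge \rone$.

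The main obstacle I anticipate is the bookkeeping required to identify the rearranged products $\prod_l(I_A)_{j_l,j_{l+1}}$ and $\prod_l(I_B)_{j_{l+1},j_l}$ with genuine permanent cycle contributions of $I_A$ and $I_B$ respectively (one cycle and its inverse), so that the quasi-identity cycle bound applies cleanly in both factors; once the row/column indices are lined up correctly the contradiction is immediate.
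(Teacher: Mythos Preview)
Your proposal is correct and follows essentially the same approach as the paper's proof: assume all diagonal entries are ghost, extract for each $i$ an index $k(i)\ne i$ with $(I_A)_{i,k(i)}(I_B)_{k(i),i}\nuge\rone$, iterate to obtain a nontrivial cycle, and derive a contradiction by multiplying the inequalities around the cycle and invoking the bound that any nontrivial cycle product in a quasi-identity matrix is strictly $\nu$-smaller than~$\rone$. The anticipated bookkeeping obstacle is not a real issue: the regrouped products are precisely the cycle weights in $I_A$ along $(j_1\,j_2\cdots j_m)$ and in $I_B$ along the inverse cycle, so the quasi-identity cycle bound applies directly to each factor.
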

\begin{proof} Otherwise, write $I_A = (a_{i,j})$ and $I_B = (b_{i,j})$.
 If the assertion is false, then for each~$i_t$ there is $i_{t+1}$
 such that $a_{i_t,i_{t+1}}b_{i_{t+1},i_t}\ge_\nu \fone.$ Consider
 the digraph $G$ of $I_A I_B$, cf.
 \cite[\S 3.2]{IzhakianRowen2008Matrices}.
By the pigeonhole principle, the path of vertices $i_1, i_2, i_3,
\dots, i_{n+1}$  contains a cycle, say from $i_s$ to $i_s'$. But
the weight of any non-loop cycle in a quasi-identity has
$\nu$-value less than $\rone$.  (Otherwise, multiplying by the
entries $a_{i,i}$ for all vertices $i$ not in the cycle gives an
extra summand $\ge e = \rone^\nu$ for $|I_A|$, contrary to $|I_A|
= \rone.$) Hence
$$\rone \le_\nu \prod_{k=s}^{s'-1} a_{i_k,i_{k+1}} b_{i_{k+1},i_k}
= \prod_{k=s}^{s'-1} a_{i_k,i_{k+1}}\prod_{k=s}^{s'-1}
b_{i_{k+1},i_k} \nul \rone\rone = \rone,$$ a contradiction.
\end{proof}

\begin{thm}\label{prdnonsng} The product of two nonsingular $n \times n$ matrices cannot be in $M_n(\tGz)$. \end{thm}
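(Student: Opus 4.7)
The plan is to leverage Lemma \ref{diag} by sandwiching $AB$ between the appropriate adjoint-type matrices to produce a product of two quasi-identities whose diagonal is forced to be entirely ghost.

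Assume for contradiction that $A,B$ are nonsingular $n\times n$ matrices with $AB\in M_n(\tGz)$. Since $|A|$ and $|B|$ are tangible and (in a supertropical semifield) invertible, formulas \eqref{eq:Anb} and \eqref{eq:IA} produce matrices $A^{\nb}$ and $B^{\nb}$ together with the quasi-identities
\[
\um'_A \; = \; A^{\nb}A, \qquad \um_B \; = \; B\,B^{\nb}.
\]
The first step is to form the product
\[
\um'_A\,\um_B \; = \; A^{\nb}(AB)B^{\nb}.
\]

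Next I would argue that every entry of this matrix lies in $\tGz$. Indeed, the $(i,j)$-entry of $A^{\nb}(AB)B^{\nb}$ is a sum of terms of the form $(A^{\nb})_{i,k}(AB)_{k,\ell}(B^{\nb})_{\ell,j}$, and by hypothesis each $(AB)_{k,\ell}\in\tGz$. Since $\tGz$ is a semiring ideal, every such term is in $\tGz$, and the sum of ghost elements remains in $\tGz$. In particular, the diagonal entries of $\um'_A\,\um_B$ all lie in $\tGz$.

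But this directly contradicts Lemma \ref{diag}, which asserts that the diagonal of a product of two quasi-identity matrices cannot consist entirely of ghost elements. Hence our assumption that $AB\in M_n(\tGz)$ is untenable, and the theorem follows.

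The only nontrivial step is recognizing the identity $A^{\nb}(AB)B^{\nb}=(A^{\nb}A)(BB^{\nb})=\um'_A\,\um_B$, which is what reduces a statement about arbitrary nonsingular matrices to the already-established result about quasi-identities; the rest is routine verification that the ghost ideal absorbs products.
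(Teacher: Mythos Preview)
Your proof is correct and essentially identical to the paper's: both form $\um'_A\,\um_B = A^{\nb}(AB)B^{\nb}$, observe that this lies in $M_n(\tGz)$ since $\tGz$ is an ideal, and invoke Lemma~\ref{diag} for the contradiction. The paper is terser, but the argument is the same.
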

\begin{proof} If $AB$ is ghost for $A,B$ nonsingular, then in the notation of
\cite[Definition 4.6]{IzhakianRowen2008Matrices},
$$I_{A^\nb}I_B = I'_A I_B = A^\nb A B B^\nb \in M_n(\tGz),$$ contradicting the lemma.
\end{proof}

On the other hand, examples were given in
\cite{IzhakianRowen2008Matrices} in which the product of two
nonsingular $n \times n$ matrices is singular. Here is a related
example using quasi-identities:

\begin{example} The matrices $$A= \left( \begin{matrix} 0 &  0^\nu   \\ - \infty & 0
\end{matrix}\right) , \qquad  B = \left( \begin{matrix} 0 & - \infty  \\ 0^\nu & 0
\end{matrix}\right)$$  over $D(\Real)$ are nonsingular,
but $AB  = \left( \begin{matrix}  0^\nu &  0^\nu   \\  0^\nu & 0
\end{matrix}\right)$ and $BA =  \left( \begin{matrix} 0 &  0^\nu   \\  0^\nu & 0^\nu
\end{matrix}\right)$ are singular.
\end{example}

\section{Tropical dependence}

\emph{ Throughout the remainder of this paper, $F=(F,\tGz, \nu)$
denotes a supertropical semifield.}

 Dependence plays a major role in module theory. For supertropical
 modules with ghosts,
the familiar definition becomes degenerate. The following
modification from \cite{IzhakianRowen2008Matrices},
 in which the role of zero is replaced by the ghost ideal,
 is more suitable for our purposes.

\begin{defn}
Suppose $(V,\tHz)$ is a vector space over ~$F$. A family of
elements $S = \{ w_i: i \in I\}\subset V$ is \textbf{\tdep} if
there exists a nonempty finite subset $I'\subset I$ and a family
$\{\a_i : i \in {I'}\}\subset \tT$, such that
\begin{equation}\label{eq:dependence1}
\sum_{i \in I'} \a_i w_i \in \tHz.\end{equation}
 Any such
relation \eqref{eq:dependence1} is called a \textbf{\tdepc} for
$S$. A subset $S\subset V$ is called \textbf{\tidep} if it is not
tropically dependent.

Given an element $v\in V$, we say that $v$ is \textbf{tropically
dependent on}  a family $S = \{w_i: i \in I\}$ if $S\cup \{ v \}$
is tropically dependent,
in which case we write $v\hsim S.$ (In particular, $v \hsim \{ v
\}.$)
 A subset $S'$
of $V$ is \textbf{tropically dependent} on $S$ if $v\hsim S $ for
each $v\in S'$.
\end{defn}

 An easy observation:
\begin{rem}\label{change0}  Suppose $S = \{w_i : i \in I\}\subset V$.
 For any given set $\{ \a_i : i \in I\} \subset \tT$ of tangible elements
of~$R$, the set $S$ is tropically  independent iff $\{ \a_i w_i :
i \in I\}$ is tropically  independent. \pSkip

 \end{rem}

 Also recall that any $n+1$ vectors of $\Rz  ^{(n)}$ are tropically
dependent, by \cite[Corollary 6.7]{IzhakianRowen2008Matrices}.


\subsection{Tropical d-bases and rank}\label{dbasea}

\begin{defn}  A \textbf{d-base} (for {\it dependence base}) of a supertropical vector space
$V$ is a
maximal  set of tropically independent elements of $V$.
The \textbf{rank} of a d-base $\tB$, denoted $\rank(\tB)$,  is the
number of elements of $\tB$.
 \end{defn}

Our d-base corresponds to the ``basis'' in
\cite[Definition~5.2.4]{MS09}.

\begin{prop}\label{getbase}Any subspace of $
\Fz  ^{(n)}$  is tropically dependent on any subset $S$ of $n$
tropically independent elements. All d-bases of $\Fz  ^{(n)}$ have
precisely $n$ elements.\end{prop}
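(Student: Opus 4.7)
The plan is to leverage two facts already quoted in the exposition: (i) any $n+1$ vectors of $\Fz^{(n)}$ are tropically dependent \cite[Corollary~6.7]{IzhakianRowen2008Matrices}; and (ii) a set of $m$ vectors of $\Fz^{(n)}$ is tropically independent iff its matrix has a nonsingular $m \times m$ submatrix \cite[Theorem~3.4]{IzhakianRowen2009TropicalRank}.

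For the first assertion I would fix a tropically independent $S = \{w_1,\dots,w_n\}$ and an arbitrary $v$ in a subspace $V \subseteq \Fz^{(n)}$. Applying (i) to the $n+1$ vectors $w_1,\dots,w_n,v$ produces a nonempty $I' \subseteq \{1,\dots,n+1\}$ and tangible $\a_i$ ($i \in I'$) with $\sum_{i \in I'}\a_i w_i \in \tHz$, setting $w_{n+1}:=v$. If $n+1 \notin I'$ the relation would witness tropical dependence of $S$ alone, contradicting its independence; hence $v$ carries a tangible coefficient in some such relation, which is exactly $v \hsim S$.

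For the second assertion, the standard basis $e_1,\dots,e_n$ has the identity as its matrix, so it is tropically independent by (ii) and maximal by the first assertion, so $n$-element d-bases exist; (i) precludes d-bases of size larger than $n$. To exclude d-bases of size $m<n$, given such $S$ I would use (ii) to obtain a nonsingular $m \times m$ submatrix on some column set $C \subseteq \{1,\dots,n\}$ with $|C|=m$, choose $j \in \{1,\dots,n\}\setminus C$, and augment $S$ by $e_j$. The $(m+1)\times(m+1)$ submatrix of the enlarged matrix on columns $C \cup \{j\}$ has row $m+1$ equal to $\rone$ in the $j$-column and $\rzero$ elsewhere; in expanding its permanent, only those permutations pairing row $m+1$ with column $j$ avoid a $\rzero$ factor, so the total collapses to the original tangible $m \times m$ permanent. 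By (ii) again, $S \cup \{e_j\}$ is tropically independent, contradicting maximality.

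The main obstacle I anticipate is the extension step in the previous paragraph: tropical independence does not enjoy the unrestricted extension property of classical linear independence, so one cannot simply enlarge by an arbitrary vector outside the span. Choosing the standard basis vector $e_j$ for an index $j$ outside the column set of the guaranteed nonsingular minor is the key trick, forcing the augmented permanent to inherit nonsingularity from that minor and sidestepping any delicate interference among tangible summands.
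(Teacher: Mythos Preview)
Your proof is correct and uses the same two ingredients the paper cites (the nonsingularity criterion for tropical independence and the fact that any $n+1$ vectors in $\Fz^{(n)}$ are tropically dependent). The argument for the first assertion and for the upper bound on d-base size is identical to the paper's.

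Where you genuinely add content is the lower bound. The paper's proof is extremely terse there: it asserts that ``any d-base $\tB$ of $\Fz^{(n)}$ must have at least $n$ elements'' as a consequence of the nonsingularity criterion, without spelling out an extension step. Your explicit construction---augmenting a size-$m$ independent set by the standard basis vector $\varepsilon_j$ for some column $j$ outside the support of the guaranteed nonsingular $m\times m$ minor, and checking that the resulting $(m{+}1)\times(m{+}1)$ permanent collapses to the old tangible one---fills in precisely what the paper leaves implicit. This is not gratuitous detail: as the paper itself stresses shortly afterward (the example with $v_1,\dots,v_4$), maximal tropically independent sets in a subspace need \emph{not} all have the same cardinality, so one cannot simply invoke a generic exchange property. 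Choosing $\varepsilon_j$ with $j\notin C$ is exactly the right move to force the augmented minor to be nonsingular without any interference among tangible summands.
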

\begin{proof}
By \cite[Theorem 6.6]{IzhakianRowen2008Matrices}, the matrix $A$
of $S$ is nonsingular iff $S$  is tropically independent, so in
particular any d-base $\tB$ of $\Fz  ^{(n)}$ must have at least
$n$ elements. On the other hand, any $n+1$ vectors in ~$\Fz
^{(n)}$ are tropically dependent, by \cite[Remark
1.1]{IzhakianRowen2008Matrices}, so $\tB$ has precisely $n$
elements.
\end{proof}

This leads us to the following definition.

\begin{defn} The \textbf{rank}  of a supertropical vector space
$V$ is defined as:
$$ \rank(V):= \max \big \{ \rank(\base) : \base \text{ is a d-base of } V \big \}.$$
\end{defn}

We have just seen that $\rank(\Fz  ^{(n)}) = n.$

\begin{cor}\label{sync} If $V\subset \Fz  ^{(n)}$, then $\rank(V) \le n$.\end{cor}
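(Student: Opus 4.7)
The plan is to deduce this immediately from Proposition~\ref{getbase} together with the definition of rank. The key observation is that tropical independence is an absolute notion: a subset $\base \subseteq V \subseteq \Fz^{(n)}$ that is tropically independent as a subset of $V$ is, by Definition, tropically independent as a subset of $\Fz^{(n)}$ as well, because the defining condition~\eqref{eq:dependence1} only asks that no tangible combination lie in $\tHz$, and this condition does not refer to the ambient space.

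So, given any d-base $\base$ of $V$, the set $\base$ is a tropically independent subset of $\Fz^{(n)}$. By Proposition~\ref{getbase} (or directly by \cite[Corollary~6.7]{IzhakianRowen2008Matrices}), any collection of $n+1$ vectors in $\Fz^{(n)}$ is tropically dependent; hence $\rank(\base) = |\base| \le n$. Taking the maximum over all d-bases of $V$ yields $\rank(V) \le n$.

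The only step requiring any care is the compatibility of ``tropically independent in $V$'' with ``tropically independent in $\Fz^{(n)}$,'' but since the ghost submodule $\tHz$ of $V$ is contained in $\tGz^{(n)}$ (as $V$ is a submodule of $\Fz^{(n)}$ and $\tHz \subseteq V$), a tangible relation witnessing dependence in the ambient space would also witness it in $V$, and vice versa. So there is really no obstacle; the corollary is a direct consequence of the proposition.
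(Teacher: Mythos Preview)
Your proof is correct and follows essentially the same approach as the paper: both rely on the fact that a tropically independent subset of $V$ is tropically independent in $\Fz^{(n)}$, and then invoke Proposition~\ref{getbase} to bound the size by $n$. The paper phrases this as ``any d-base of $V$ is contained in a d-base of $\Fz^{(n)}$, whose order is $n$,'' which is just a slightly different packaging of the same observation; your explicit check that the ghost submodule of $V$ is $V\cap\tGz^{(n)}$ (so that the two notions of independence agree) makes the argument a bit more self-contained.
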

\begin{proof}
Any d-base of $V$ is contained in a d-base of $\Fz  ^{(n)}$, whose
order must be that of the standard base (to be given in \S
\ref{ssec:4.1}), which is $n$.
\end{proof}

We might have liked $\rank(V)$ to be independent of the choice of
d-base of $V$,
 for any supertropical vector space $V$. This is proved in the
classical theory of vector spaces by showing that dependence is
transitive. However, transitivity fails in the supertropical
theory, since we have the following sort of counterexample.


\begin{example}  In logarithmic notation, over $D(\Real)^{(3)}$, the vector
$v = (0,1,3) $ is tropically dependent on $W = \{ w_1, w_2\},$
where $w_1 = (1,1,2)$ and $w_2 = (1,1,3), $ since $v+w_1+w_2 =
(1^\nu, 1^\nu,3^\nu)$. Furthermore, $W$ is tropically dependent on
$U = \{ u_1, u_2\},$ where $u_1 = (1,1,0)$ and $u_2 = (-\infty,
-\infty,1),$ since $$w_1+u_1+1u_2 = (1^\nu, 1^\nu, 2^\nu), \qquad
w_2+u_1+2u_2 = (1^\nu, 1^\nu, 3^\nu).$$ But $v,u_1,$ and $u_2$ are
tropically independent, since the tropical determinant of the
matrix whose rows are these vectors is $3 \in \tT$.
\end{example}

In fact, different d-bases may contain different numbers of
elements, even when tangible. An example is given in
\cite[Example~5.4.20]{MS09}, which is reproduced here with
different entries.

\begin{example}\label{sum10} Consider the following vectors in
$D(\Real)^{(3)}$:
$$v_1 = (5,5,0),\quad v_2 = (5,5,4),\quad  v_3 = (0,1,4), \quad  v_4 = (0,2,4)  .$$
Then  $v_1,v_2,$ and $v_3$ are tropically dependent (since their
sum $(5^\nu,5^\nu,4^\nu)$ is ghost) and likewise $v_1,v_2,$
and~$v_4$ are tropically dependent. It follows that $\{v_1, v_2\}$
is a d-base for  the supertropical vector space~$V$ spanned by
$v_1, v_2, v_3,$ and $v_4$. But $v_2,v_3,$ and $v_4$ are
tropically independent since their determinant is~$11$, which is
tangible; hence, $\{v_2,v_3,v_4\}$ is also a d-base of $V$.
\end{example}

We do have a consolation.

\begin{lem}\label{tangdep} If the vectors $v_1, \dots, v_{k} \in F  ^{(n)}$ are tropically independent and the vector $v$ is tangible,
then there are $i_1, \dots, i_{k-1}$ in $\{ 1, \dots, k\}$ such
that the vectors $ v_{i_1}, \dots, v_{i_{k-1}}, v $ are tropically
independent.\end{lem}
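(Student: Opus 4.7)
The plan is to argue by contradiction using the supertropical matrix machinery. Suppose that for every $j\in\{1,\dots,k\}$, the set $\{v,v_i:i\neq j\}$ is \tdep. Since the $v_i$ are \tidep, each such dependence must involve $v$ with a tangible coefficient (which we normalize to $\rone$), and by \cite[Theorem~3.4]{IzhakianRowen2009TropicalRank} this is equivalent to saying that the $k\times n$ matrix $\tilde M^{(-j)}$ obtained from $M=(v_1;\dots;v_k)$ by replacing row $j$ with $v$ has no nonsingular $k\times k$ submatrix. On the other hand, the same theorem supplies a column set $T_0\subset\{1,\dots,n\}$ of size $k$ with $M[T_0]$ nonsingular, and $T_v:=\{l:v^l\neq \rzero\}$ is nonempty because $v$ is a nonzero tangible vector. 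The argument splits according to whether $T_0$ can be chosen so that $T_v\cap T_0\neq\emptyset$.

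\emph{Case A}: some nonsingular $T_0$ satisfies $T_v\cap T_0\neq\emptyset$. Passing to the columns $T_0$ reduces us to $n=k$; here $v|_{T_0}$ is still tangible and has some nonzero component. By Cramer's rule, the $j$-th component of the row vector $v|_{T_0}\cdot M[T_0]^\nabla$ equals $|\tilde M^{(-j)}[T_0]|/|M[T_0]|\in\tGz$, so $v|_{T_0}\cdot M[T_0]^\nabla \lmodh \vzero$; right-multiplying by $M[T_0]$ gives $v|_{T_0}\cdot \um'_{M[T_0]} \lmodh \vzero$. Since $\um'_{M[T_0]}$ is a quasi-identity with tangible determinant $\rone$, inspection of its permanent expansion shows that its diagonal entries are tangible and equal to $\rone$ while its off-diagonal entries are ghosts whose every non-trivial cycle product has $\nu$-value strictly less than $\rone$. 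Reading the component-wise constraint at each $l\in T_v\cap T_0$ forces the existence of $\sigma(l)\in T_v\cap T_0$ with $\sigma(l)\neq l$ and $v^{\sigma(l)}\cdot(\um'_{M[T_0]})_{\sigma(l),l}\ge_\nu v^l$; iterating $\sigma$ on the finite set $T_v\cap T_0$ produces a cycle of length $\ge 2$, and multiplying the inequalities around the cycle (and cancelling the tangible product of the $v^{l_s}$'s) yields a cycle product of off-diagonal ghost entries of $\nu$-value $\ge_\nu \rone$, contradicting the quasi-identity bound.

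\emph{Case B}: $T_v\cap T_0=\emptyset$ for the chosen $T_0$, so that $v|_{T_0}=\vzero$. Pick $l_0\in T_v$ (necessarily $l_0\notin T_0$) and any $t\in T_0$, and set $T':=(T_0\setminus\{t\})\cup\{l_0\}$. The row $v|_{T'}$ then has exactly one nonzero entry, at $l_0$; expanding $|\tilde M^{(-j)}[T']|$ along this row collapses the determinant to $v^{l_0}\cdot m_j^{(t)}$, where $m_j^{(t)}$ is the $(j,t)$-cofactor of $M[T_0]$. The cofactor expansion $|M[T_0]|=\sum_{j'}(M[T_0])_{j',t}\,m_{j'}^{(t)}$ is tangible, so some unique summand dominates in $\nu$-value and is itself tangible; for that dominating index $j^*$ both $(M[T_0])_{j^*,t}$ and $m_{j^*}^{(t)}$ must be tangible. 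Hence $|\tilde M^{(-j^*)}[T']|=v^{l_0}\,m_{j^*}^{(t)}$ is tangible and $\tilde M^{(-j^*)}[T']$ is nonsingular, contradicting the standing assumption.

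The main obstacle is Case A, where one has to pin down the precise structure of the quasi-identity $\um'_{M[T_0]}$ (diagonal equal to $\rone$ in $\tT$; off-diagonals in $\tGz$ with every non-trivial cycle product strictly $<_\nu \rone$) and then guarantee that the contradiction-producing map $\sigma$ genuinely lands back in $T_v\cap T_0$, so that iteration produces an honest cycle on which the inequalities can be multiplied and the tangible factors cancelled. Case B is a direct cofactor calculation once the column-swap $T_0\leadsto T'$ is identified.
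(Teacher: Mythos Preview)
Your proposal is correct, and the core idea in Case~A --- restrict to a column set $T_0$ on which the matrix $M$ of the $v_i$ is nonsingular, then observe via Cramer/Laplace that the restricted tangible row $v|_{T_0}$ $g$-annihilates the adjoint (equivalently, the quasi-identity $I'_{M[T_0]}$) --- is exactly the paper's. The paper dispatches the contradiction by a direct appeal to \cite[Corollary~6.6]{IzhakianRowen2008Matrices} (a nonzero tangible vector cannot $g$-annihilate a nonsingular matrix); your Case~A instead unpacks that citation via the no-large-cycle property of quasi-identities, essentially the argument inside the paper's Lemma~\ref{diag}. So Case~A is the paper's proof one level deeper.

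Where you genuinely add something is Case~B. After ``reducing to $n=k$'', the paper never addresses the possibility that $v|_{T_0}$ is the zero vector; in that situation the $g$-annihilation is vacuous and the cited corollary yields no contradiction. This edge case can occur (e.g.\ $k=2$, $n=3$, $v_1=(\rone,\rone,\rzero)$, $v_2=(\rone,\a,\rzero)$ with $\a\not\nucong\rone$, $v=(\rzero,\rzero,\rone)$: the only nonsingular $T_0$ is $\{1,2\}$, disjoint from the support of $v$). Your column-swap $T_0\leadsto T'$ and single-entry Laplace expansion handle it cleanly. So your argument follows the same route as the paper but is more complete; Case~B patches a small gap the paper leaves open.
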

\begin{proof} Let $A$ be the $k\! +\! 1 \, \times \,  n$ matrix whose rows are
$v_1, \dots, v_{k},v,$ and let $A_0$ denote the  $k\times n$
matrix  of the first $k$ rows $  v_1, \dots, v_{k}.$  By
\cite[Theorem~3.4]{IzhakianRowen2009TropicalRank}, $A_0$ has a
nonsingular $k\times k$ submatrix obtained by deleting $n-k$
columns; deleting these columns in $A$, we have reduced to the
case that $n=k;$ i.e., $A$~is a $k\! +\! 1 \, \times \, k$ matrix.
Now let $A_0' = (a'_{j,i})$ denote the adjoint matrix of $A_0$. We
are done unless for each row $i\le k$, the $k \times k$ submatrix
of $A$ obtained by deleting the $i$ row is singular, which means
that $\sum_{j=1}^k a'_{i,j}a_{k+1,j}$ is ghost. This means that
the vector $(a_{k+1,1},\dots,  a_{k+1,k}) $ g-annihilates the
matrix $A_0'$, which is nonsingular by
\cite[Theorem~4.9]{IzhakianRowen2009Equations}, an impossibility
in view of  \cite[Corollary~6.6]{IzhakianRowen2008Matrices}.
\end{proof}

\begin{prop}\label{thm:tangibleRank} For any tropical subspace $V$ of
$\Fz  ^{(n)}$ and any tangible $v\in V,$ there is a tangible
d-base of $V$ containing $v$ whose rank is that of $V$.
\end{prop}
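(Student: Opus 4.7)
The strategy proceeds in two stages: first use Lemma~\ref{tangdep} to insert $v$ into a d-base of $V$ of rank $r := \rank(V)$, and then tangibilize the remaining elements by perturbation with $v$. To begin, pick any d-base $\tB_0 = \{w_1, \dots, w_r\}$ of $V$ of rank $r$. Since $v \in V$ is tangible, Lemma~\ref{tangdep} applied to $\tB_0$ and $v$ yields, after renumbering, a tropically independent set $\{w_1, \dots, w_{r-1}, v\}$. By maximality of $r$ as the rank, any $r+1$ vectors in $V$ are tropically dependent, so this $r$-element set is automatically a d-base of $V$ of rank $r$ containing $v$.

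Next, I would replace each $w_j$, for $j = 1, \dots, r-1$, by $\tilde w_j := w_j + \a_j v \in V$ for a tangible scalar $\a_j \in F$ chosen to achieve simultaneously (a) $\tilde w_j$ is tangible and (b) the modified set remains tropically independent. For (a), coordinate-wise analysis requires $\a_j$ to be sufficiently large in $\nu$-value to strictly $\nu$-dominate each ghost or zero coordinate of $w_j$, while avoiding the finitely many values that would $\nu$-match some tangible coordinate of $w_j$. For (b), by \cite[Theorem~3.4]{IzhakianRowen2009TropicalRank} fix an $r \times r$ nonsingular submatrix $M$ of the current matrix of row-vectors; after the replacement the new submatrix $M'$ has permanent
\[
|M'| = |M| + \a_j |M_v|,
\]
where $M_v$ is $M$ with row $j$ replaced by $v$. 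Since $v$ already occupies another row of $M$, the matrix $M_v$ has two identical rows; pairing each permutation with the transposition of those two rows shows every term in its permanent appears twice, so $|M_v|$ is ghost (using $a + a = a^\nu$). Because $|M|$ is tangible, $|M'|$ remains tangible precisely when $\a_j^\nu |M_v|^\nu < |M|^\nu$, which gives an upper $\nu$-bound on $\a_j$.

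The main obstacle is to verify that the lower $\nu$-bound from (a) is strictly below the upper $\nu$-bound from (b), so that the admissible set of $\a_j$ is nonempty in the value group of $F$, after which one avoids the finitely many forbidden values using density of the value group. This compatibility rests on the fact that the nonsingular $M$ has a unique tangible dominating permutation whose contribution governs $|M|$, while the ghost coordinates of $w_j$ control both the lower bound and the structure of $|M_v|$; expanding $|M|$ and $|M_v|$ along row $j$ lets one bound the ratio $|M_v|^\nu/|M|^\nu$ against the relevant ghost-to-tangible $\nu$-ratios of $w_j$. Once such $\a_j$ are chosen successively for $j = 1, \dots, r-1$, the result is a tangible d-base of $V$ of rank $r$ containing $v$.
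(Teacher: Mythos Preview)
Your stage~1 is exactly how the paper uses Lemma~\ref{tangdep}, but the paper's proof diverges sharply at that point: rather than starting from an arbitrary d-base and then tangibilizing, the paper simply starts from a \emph{tangible} d-base of $V$ of maximal rank and applies the lemma once. Since the lemma retains $r-1$ of the original (already tangible) vectors and adjoins the tangible~$v$, the resulting d-base is tangible with no further work. The paper thus treats the existence of a tangible d-base of maximal rank as given, whereas your stage~2 is an attempt to manufacture one from scratch.

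Your stage~2, however, has a genuine gap. A tangible vector $v$ in $\Fz^{(n)}$ may have coordinates equal to $\fzero$ (tangible means each entry lies in $\tTz = \tT \cup \{\fzero\}$), and in such a coordinate the perturbation $w_j + \a_j v$ leaves the entry of $w_j$ unchanged; if $w_j$ carries a ghost there, $\tilde w_j$ can never be made tangible by your recipe. Concretely, in $D(\Real)^{(3)}$ (logarithmic notation) the pair $v = (0,\, -\infty,\, 0)$, $w_1 = (10,\, 5^\nu,\, 0)$ is tropically independent, yet the second coordinate of $w_1 + \a_1 v$ equals $5^\nu$ for every choice of $\a_1$. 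Even when every coordinate of $v$ lies in $\tT$, you yourself label the compatibility of the lower bound from~(a) with the upper bound from~(b) as ``the main obstacle'' and offer only a heuristic via row expansion; that sketch is not a proof, and your closing appeal to density of the value group imports an additional hypothesis on $F$ that is nowhere assumed in the statement.
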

\begin{proof} Take a tangible d-base of
$V$ of maximal rank, and apply the lemma.
 \end{proof}

 \begin{example} Failure of the analog of Proposition \ref{thm:tangibleRank} for
 non-tangible vectors: Consider the supertropical vector space $W \subset D(\Real)^{(2)}$ spanned by
 $w_1 = (0,1)$ and $w_2 = (0,2)$. Then $v = (1,3^\nu)$  comprises a
 d-base of $W$, consisting of only one element.
 \end{example}

\begin{prop}\label{sumdep1} If $A$ is a matrix of rank $m$, its g-annihilator has a tangible tropically independent set of rank $\ge n-m$.
\end{prop}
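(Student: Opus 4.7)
The plan is to construct $n-m$ explicit tangible vectors in $\Ann(A)$, one for each column of $A$ outside a chosen nonsingular $m \times m$ block, and then to verify their tropical independence by inspecting a single coordinate of each.

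First I would write $A$ as a $p \times n$ matrix and invoke \cite[Theorem~3.4]{IzhakianRowen2009TropicalRank} to pick a nonsingular $m \times m$ submatrix $B$ of $A$. Permuting the rows and columns of $A$ merely relabels coordinates of $F^{(p)}$ and $F^{(n)}$, preserving tangibility and tropical independence of vectors in the g-annihilator, so I may assume that $B$ occupies the top-left block of $A$.

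Next, for each $k$ with $m+1 \le k \le n$, I would consider the $m+1$ columns $A_{:,1},\dots,A_{:,m},A_{:,k}$ as vectors in $F^{(p)}$. The rank hypothesis forces these to be tropically dependent, so there exist tangible scalars $\beta^{(k)}_1,\dots,\beta^{(k)}_m,\beta^{(k)}_*$, not all zero, with
\[
\sum_{j=1}^m \beta^{(k)}_j A_{:,j}+\beta^{(k)}_* A_{:,k}\in \tHz.
\]
The main (mild) obstacle is showing that $\beta^{(k)}_*$ is tangible and nonzero: if it were zero, restricting the relation to the first $m$ rows would yield a tropical dependence of the columns of $B$, contradicting \cite[Theorem~6.6]{IzhakianRowen2008Matrices}, which says that the columns of a nonsingular matrix are tropically independent. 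I would then rescale to assume $\beta^{(k)}_* = \fone$.

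Finally, I would define $v_k \in F^{(n)}$ by $(v_k)_j = \beta^{(k)}_j$ for $j \le m$, $(v_k)_k = \fone$, and $(v_k)_j = \fzero$ for the remaining indices. Each $v_k$ is tangible, and by construction $A v_k = \sum_{j\le m}\beta^{(k)}_j A_{:,j}+A_{:,k}\in \tHz$, so $v_k \in \Ann(A)$. For tropical independence of $\{v_{m+1},\dots,v_n\}$, I would take any tangible combination $\sum_{k>m}\gamma_k v_k$ with some $\gamma_{k_0} \in \tT$; since $(v_k)_{k_0} = \fzero$ for $k \ne k_0$ (both indices exceeding $m$) and $(v_{k_0})_{k_0} = \fone$, the $k_0$-th coordinate of the combination equals $\gamma_{k_0}$, which is tangible nonzero, so the combination cannot lie in $\tHz$. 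This yields $n-m$ tangible tropically independent elements of $\Ann(A)$, and the entire argument is coordinate bookkeeping once the nonvanishing of $\beta^{(k)}_*$ is established.
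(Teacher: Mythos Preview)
Your proof is correct and follows essentially the same strategy as the paper: produce, for each index $k$ beyond a chosen nonsingular $m\times m$ block, a tropical dependence whose coefficient vector has a $\fone$ in position $k$ and $\fzero$ in the other ``extra'' positions, and then read off tropical independence from the resulting $(n-m)\times(n-m)$ identity block. The only difference is orientation: the paper takes $m$ tropically independent \emph{rows} of $A$ and, for each remaining row $v_u$, writes down a dependence $v_u+\sum_i\beta_{u,i}v_i\in\tGz^{(n)}$; packaging these coefficients into a matrix $B$ with an identity block yields $BA$ ghost, i.e., $n-m$ independent \emph{left} annihilators (equivalently, elements of $\Ann(A^{\trn})$). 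Your version works with columns and therefore lands directly in $\Ann(A)$ as defined, and it also handles rectangular $p\times n$ matrices without assuming $A$ is square. Your extra care in verifying $\beta^{(k)}_*\ne\fzero$ via the nonsingularity of the chosen $m\times m$ block is exactly the point the paper leaves implicit.
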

\begin{proof} Take $m$ tropically independent rows $v_1, \dots, v_m$ of $A$, which we may
assume are the first $m$ rows of~$A$. For any other row $v_u$ of
$A$ ($m < u \le n$), we have $\bt_{u,1}, \dots, \bt_{u,m}\in \tTz$
such that $v_u + \sum \bt_{i,j} v_i \in   \tGz^{(n)}.$ Letting $B$
be the $(n-m) \, \times n$ matrix whose $(i,j)$ entries are
$\bt_{i,j}$ for $1 \leq i,j \leq m$,  and for which $\bt_{i,j} =
\delta_{i,j}$ (the Kronecker delta) for $m < j \le n$, we see that
$B$ contains an $(n-m) \times  (n-m)$ identity submatrix so has
tangible rank $\ge n-m$, but $BA $ is ghost.
\end{proof}

\begin{example} An example of a $3 \times 3$ matrix $A$ over $D(\Real)$ of rank $m=2$, all of whose entries are tangible,
 although
 $\rank (\Ann (A)) = 2>3-2$.
Take $$A = \left( \begin{matrix} 4 & 4 & 0 \\ 4 & 4 & 1 \\  4 & 4&
2
\end{matrix}\right).$$  $A$ is g-annihilated   by the tropically independent
vectors $v_1 = (1, 1, 0)^\trn$ and $v_2 = (1, 1, 1)^\trn$, since
$Av_1 = Av_2 = (5,5,5)^\trn$.

Note that this kind of example requires $n\ge 3,$ in view of
Theorem~\ref{prdnonsng}.
\end{example}

\subsection{Saturated dependence relations}

 Let
us  study tropical dependence relations in $\Rz^{(n)}$ more
closely.  Example \ref{rmk:bases2}(ii) below shows that   a
tropical dependence of a vector $v$ on an independent set $S= \{
w_i : i \in I\}$ is not determined uniquely. Nevertheless, in this
subsection we do get a ``canonical'' tropical dependence relation,
which we call \textbf{saturated}.
 But first, in order for tropical dependence relations to be
well-defined with respect to the ghost map $\nu: \Rz \to \tGz$, we
verify the following condition.

\begin{lem}\label{compat1} Any submodule of $\Rz  ^{(n)}$
(with the standard ghost submodule $\tHz = \tGz  ^{(n)}$)
satisfies the property that whenever $\a_i, \bt_i \in \tT$ with
$\a_i \nucong \bt_i ,$
\begin{equation}\label{cons} \sum_i \a_i w_i \in \tHz \qquad \text{iff} \qquad \sum _i
\bt_i w_i \in \tHz.\end{equation}
\end{lem}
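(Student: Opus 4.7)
The plan is to reduce the statement to the one-dimensional (scalar) case by working componentwise, and then exploit the explicit dichotomy in a supertropical semiring that tells us exactly when a sum lands in $\tGz$.

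First I would note that a vector $u = (u_1,\dots,u_n)$ lies in $\tHz = \tGz^{(n)}$ iff each coordinate $u_j$ lies in $\tGz$. Applied to $u = \sum_i \a_i w_i$, this reduces \eqref{cons} to proving, for each fixed $j$, that $\sum_i \a_i w_{i,j} \in \tGz$ iff $\sum_i \bt_i w_{i,j} \in \tGz$. So it suffices to prove the scalar version: if $\a_i, \bt_i \in \tT$ with $\a_i \nucong \bt_i$ and $x_i \in F$, then $\sum_i \a_i x_i \in \tGz$ iff $\sum_i \bt_i x_i \in \tGz$.

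Next I would invoke the supertropical axioms (a),(b) to get a clean criterion for when a supertropical sum is ghost. Setting $m := \max_i (\a_i x_i)^\nu$ (in the totally ordered ghost ideal) and letting $J := \{i : (\a_i x_i)^\nu = m\}$ be the set of ``dominating'' indices, the supertropical rules force
\[
\sum_i \a_i x_i \in \tGz \quad \Longleftrightarrow \quad |J| \ge 2 \ \text{ or } \ \a_{i_0} x_{i_0} \in \tGz \text{ for some } i_0 \in J.
\]
The claim then follows from two observations about replacing $\a_i$ by $\bt_i$: (i) since $(\a_i x_i)^\nu = \a_i^\nu \, x_i^\nu = \bt_i^\nu \, x_i^\nu = (\bt_i x_i)^\nu$, both the maximum $m$ and the dominating set $J$ are unchanged; and (ii) because $F$ is a supertropical semifield, $\a_i \in \tT$ is invertible and multiplication by a tangible preserves the partition $\tT \sqcup \tGz$, so $\a_i x_i \in \tGz \Leftrightarrow x_i \in \tGz \Leftrightarrow \bt_i x_i \in \tGz$. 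Both defining conditions for the sum being ghost are thus preserved.

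The only mild obstacle I anticipate is making sure the criterion in the displayed equivalence above is applied cleanly, in particular handling the possibility that some $x_i = \rzero$ (which is harmless, since it contributes $\rzero$ to both sums) and that several $x_i$ may themselves be ghost (in which case each $\a_i x_i$ and $\bt_i x_i$ lies in $\tGz$ simultaneously). Once these trivial cases are dispatched, the argument is symmetric in $\a$ and $\bt$, giving both directions of the ``iff'' at once.
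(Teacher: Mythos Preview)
Your proposal is correct and follows essentially the same route as the paper: reduce to a single coordinate, then use the supertropical dichotomy that a sum is ghost exactly when either two dominant summands are $\nu$-matched or the unique dominant summand is itself ghost, and observe that both alternatives are invariant under replacing each $\a_i$ by a $\nu$-matched tangible $\bt_i$. The only cosmetic difference is that you phrase the dichotomy via the dominating index set $J$, while the paper splits into the two cases explicitly; the content is the same.
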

\begin{proof} The condition clearly passes to submodules, so it is
enough to prove it for $\Rz  ^{(n)}$, and thus, to
check~\eqref{cons} on each component. We write $w_{i,j}$ for the
$j$-component of $w_i$. Note that $\a_i w_{i,j}\nucong  \bt_i
w_{i,j}$ for each $i$. There are two ways for $\sum_i \a_i w_{i,j}
\in \tGz$:
\begin{enumerate}
 \item Some $\a_{i'} w_{i',j}$ dominates $\sum \a_{i} w_{i,j}$
and is ghost, implying $w_{i',j} \in \tGz$, so $$\sum \bt_i
w_{i,j} = \bt_{i'} w_{i',j} = \a_{i'} w_{i',j}  \in \tGz.$$ \item
Two essential summands $\a_{i'} w_{i',j}$ and $\a_{i''} w_{i'',j}$
are $\nu$-matched. But then
$$
\begin{array}{llllll}
  \sum \bt_i w_{i,j}  &  =   \bt_{i'} w_{i',j} + \bt_{i''} w_{i'',j} & =    (\bt_{i'} w_{i',j})^\nu   & &
  \\[2mm]
   & =  (\a_{i'} w_{i',j})^\nu & =   \a_{i'} w_{i',j} + \a_{i''} w_{i'',j}  & =&  \sum \a_i w_{i,j}  \in \tGz.\\
\end{array}
$$
\end{enumerate}
\end{proof}

We examine the tropical dependence
\begin{equation}\label{eq:dependence} v \hsim\sum _{i \in I}\a_i w_i
,
\end{equation}

\begin{lem}\label{sum1} Suppose $V = \Rz  ^{(n)}$. If
$v \hsim \sum _{i \in I}\a_i w_i$ and $v \hsim \sum _{i \in
I}\bt_i w_i,$ for $\a_i,\bt_i \in \tTz$, then taking $\gm _i =
\widehat{\a_i  + \bt_i} ,$ we have
$$v \hsim \sum _{i \in I}\gm _i w_i .$$\end{lem}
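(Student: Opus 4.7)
My plan is to follow the pattern of Lemma~\ref{sumofsat}: first reduce to the scalar case by checking components, then carry out a careful $\nu$-value bookkeeping argument that is more delicate than the $\lmodh$ version because $\hsim$ is symmetric and carries less information than $\lmodh$.

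\textbf{Reduction step.} Writing $u_i$ for the $j$-th component of $w_i$ and $x$ for the $j$-th component of $v$, the relation $v \hsim \sum \alpha_i w_i$ holds iff, for every coordinate $j$,
\[
x + \sum_{i \in I} \alpha_i u_i \in \tGz,
\]
and similarly with $\beta_i$ in place of $\alpha_i$. So it suffices to prove the scalar statement: given $x, u_i \in \Fz$ and $\alpha_i, \beta_i \in \tTz$ with $x + \sum \alpha_i u_i \in \tGz$ and $x + \sum \beta_i u_i \in \tGz$, and with $\gamma_i := \widehat{\alpha_i + \beta_i} \in \tTz$, show that $x + \sum \gamma_i u_i \in \tGz$. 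Note that by construction $(\gamma_i u_i)^\nu = \max((\alpha_i u_i)^\nu,(\beta_i u_i)^\nu)$, and, since each $\gamma_i$ is tangible (or zero), $\gamma_i u_i \in \tGz$ iff $u_i \in \tGz$.

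\textbf{Scalar case.} Set $M := \max\bigl(\{x^\nu\} \cup \{(\gamma_i u_i)^\nu : i \in I\}\bigr)$, taken in the ordered ghost monoid $\tGz$. For the sum $x + \sum \gamma_i u_i$ to lie in $\tGz$, it is enough to show that the collection of summands achieving $\nu$-value $M$ either contains a ghost or contains two $\nu$-matched elements. I plan to split into three cases: (i) $x$ is a ghost achieving $x^\nu = M$; (ii) $x^\nu < M$; (iii) $x$ is tangible with $x^\nu = M$. Case (i) is immediate since $x$ itself dominates the sum into $\tGz$.

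\textbf{Extracting the matching pair.} The main content is in cases (ii) and (iii). In case (ii), at least one of $M_\alpha := \max_i(\alpha_i u_i)^\nu$ and $M_\beta := \max_i(\beta_i u_i)^\nu$ equals $M$ (say $M_\alpha = M$). Since $x^\nu < M$ and $x + \sum \alpha_i u_i \in \tGz$, the maximum $M$ in that sum is attained either by some $\alpha_{i_0} u_{i_0}$ with $u_{i_0} \in \tGz$, in which case $\gamma_{i_0} u_{i_0}$ is a ghost of $\nu$-value $M$ dominating the $\gamma$-sum, or by at least two tangible summands $\alpha_{i_1} u_{i_1}, \alpha_{i_2} u_{i_2}$ that are $\nu$-matched at $M$, which then forces $(\gamma_{i_1} u_{i_1})^\nu = (\gamma_{i_2} u_{i_2})^\nu = M$, yielding the required matched pair in the $\gamma$-sum. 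In case (iii), $x$ itself has $x^\nu = M$ and is tangible, so by the same argument applied to $x + \sum \alpha_i u_i \in \tGz$ there must exist $i_0$ with $(\alpha_{i_0} u_{i_0})^\nu = M$; then $(\gamma_{i_0} u_{i_0})^\nu = M$, and together with $x$ we obtain two summands of $\nu$-value $M$ in the $\gamma$-sum, forcing it into $\tGz$.

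\textbf{Main obstacle.} The technical work is not deep but requires some care: one must never forget that $\gamma_i u_i$ can be tangible even when $\alpha_i u_i + \beta_i u_i$ is ghost (the $\nu$-matched coefficient case), so one cannot simply compare $x + \sum \gamma_i u_i$ to the sum $(x + \sum \alpha_i u_i) + (x + \sum \beta_i u_i) = x^\nu + \sum(\alpha_i+\beta_i)u_i$, which is automatically ghost but surpasses $x + \sum \gamma_i u_i$ in the wrong direction. The case analysis above bypasses this by directly exhibiting the ghost or matched pair at the dominant $\nu$-level, using the two hypotheses separately.
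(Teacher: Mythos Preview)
Your proof is correct and follows essentially the same approach as the paper's: reduce to the scalar case $n=1$ by checking components, then run a case analysis on how the hypothesis $x+\sum\alpha_i u_i\in\tGz$ (or the $\beta$-version) is realized in order to exhibit a dominating ghost term or a $\nu$-matched pair in the $\gamma$-sum. Your version is more explicit and more carefully organized than the paper's terse presentation of the same three cases.
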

\begin{proof} Checking each component in turn, we may assume that $V = \Rz .$
We proceed as
in Lemma~\ref{compat1}. Namely,  $v \hsim \sum _{i \in I}\a_i w_i
$ (resp.~$v \hsim  \sum _{i \in I}\bt_i w_i$) implies one of the
following:
\begin{enumerate}
\item $v$ and some term $\a_{i'} w_{i'}$ dominate (resp.~ $v$ and
$\bt_{i'} w_{i'}$ dominate), in which case $\gm  _{i'} = \a_{i'}$
(resp.~$\gm  _{i'} = \bt_{i'}$). \pSkip

\item $\a_{i'} w_{i'}$  and $\a_{i''} w_{i''}$ dominate,
(resp.~$\bt_{i'} w_{i'}$  and $\bt_{i''} w_{i''}$ dominate), in
which case $\gm  _{i'} = \a_{i'}$ and $\gm  _{i''} = \a_{i''}$
(resp.~$\gm  _{i'} = \bt_{i'}$ and $\gm  _{i''} = \bt_{i''}$).
\pSkip

\item Some ghost term $\a_{i'} w_{i'}$  (resp.  $\bt_{i'} w_{i'}$)
dominates, in which case $\gm  _{i'} = \a_{i'}$ (resp.~$\gm _{i'}
=  \bt_{i'}$).

\end{enumerate}

\end{proof}

Lemma \ref{sum1} gives us a partial order on the coefficients of
the tropical dependence relations of $v$ on a set $S$, and
motivates the following definition:

\begin{definition}\label{def:support}
We say  that the \textbf{support} of a tropical dependence   $\a v
\hsim \sum _{i\in I}\a_i w_i $ (where $\a \in \tT$ and $ \a_i \in
\tTz$) is the set $\{ i\in I : \a_i \ne \rzero\}$. A tropical
dependence of minimal support is called \textbf{irredundant}.
\end{definition}
 \noindent

 A tropical dependence of $v$ on a tropically
independent set $S$ is called \textbf{saturated} if the
coefficients $\al_i$'s in Formula \Ref{eq:dependence} are maximal
possible with respect to $\ge _\nu,$ as defined in
Equation~\eqref{value}; in other words, whenever $ v+ \sum
_{i=1}^\ell \bt_i w_i \in \tGz^{(n)}$ with $\bt_i \in \tTz,$ then
each $\beta_i \le_\nu \a_i .$
%

\begin{rem}\label{sat} If
\begin{equation}\label{dep11} v\hsim \sum_{i=1}^\ell \a_i w_i \end{equation}
is a saturated tropical dependence, then,  for any $k\le \ell$ and
for $v' = v+ \sum_{i=1}^k \a_i w_i$,
\begin{equation}\label{dep12} v'\hsim \sum _{i=k+1}^\ell \a_i w_i   \end{equation}  is also a saturated tropical
dependence, since any $\nu$-larger tropical dependence for
\eqref{dep12}  would yield the corresponding $\nu$-larger tropical
dependence for \eqref{dep11}.
\end{rem}

\begin{thm}\label{sat1} Suppose $V = \Fz  ^{(n)},$ for a supertropical semifield $F = (F, \tG, \nu)$.
Any irredundant tropical dependence
\begin{equation}\label{dep1} v\hsim \sum _{i=1}^\ell \a_i w_i  \end{equation}  can be increased to a unique (up to
equivalence in the sense of Remark~\ref{oneone}) saturated
tropical dependence of $v$ on $S = \{ w_1, \dots, w_\ell\}$,
having the same support.
\end{thm}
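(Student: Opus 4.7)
\textbf{Proof plan for Theorem \ref{sat1}.}

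The plan is to work entirely within $\Fz^{(n)}$ and view the given irredundant dependence as a starting point from which the coefficients are pushed upward in $\nu$-value as far as they can go. First I reduce to the case where the support $I'$ of the given dependence is all of $\{1,\dots,\ell\}$ (otherwise simply delete the $w_i$ with $\a_i=\rzero$). Let
\[
\mathcal D_{I'} \; := \; \Bigl\{\bt=(\bt_1,\dots,\bt_\ell)\in \tTz^{\,\ell}\ :\ v+\sum_{i=1}^\ell \bt_i w_i\in \tGz^{(n)},\ \supp(\bt)\subseteq I'\Bigr\}.
\]
By Lemma \ref{sum1}, $\mathcal D_{I'}$ is closed under the coordinate-wise $\nu$-join $(\bt,\bt')\mapsto (\widehat{\bt_i+\bt'_i})_i$, so $(\mathcal D_{I'},\le_\nu)$ is upward directed, and by joining any member with the original $\a$ I may restrict attention to those $\bt\ge_\nu\a$. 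In particular any maximal element of $\mathcal D_{I'}$ automatically has each $\bt_i\ge_\nu\a_i\ne\rzero$, so its support is exactly $I'$.

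To produce a maximal element I analyze $\mathcal D_{I'}$ coordinate-by-coordinate. Irredundancy forces that for each $i\in I'$ there is some position $j$ with $w_{i,j}\in\tT$ (otherwise $w_i\in\tGz^{(n)}$ and the dependence still holds after dropping $w_i$). Fix any $\bt\in\mathcal D_{I'}$ with $\bt\ge_\nu\a$: at such a coordinate $j$ the summand $\bt_iw_{i,j}$ is tangible, so the constraint that the $j$-th sum be ghost prevents $\bt_iw_{i,j}$ from being strictly $\nu$-dominant, giving the inequality
\[
\nu(\bt_i)\ w_{i,j}^\nu \;\le\; v_j^\nu \;+\; \sum_{i'\ne i}\nu(\bt_{i'})\,w_{i',j}^\nu.
\]
Since at each of the $n$ positions the sum-is-ghost condition admits only finitely many combinatorial dominance patterns (a unique ghost term, or a tie between two terms), the envelope of all these inequalities cuts out a finitely presented region in $\tG^{\,\ell}$. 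I push each $\bt_i$ upward to the smallest binding constraint and, whenever two coordinates are coupled, use Lemma \ref{sum1} to join the two feasible extreme points into a single element of $\mathcal D_{I'}$ that saturates both bounds simultaneously. Iterating through the finitely many binding positions produces a $\bt^\ast\in\mathcal D_{I'}$ with $\bt^\ast\ge_\nu\bt$ for every $\bt\in\mathcal D_{I'}$; equivalently, $\bt^\ast$ is a coordinate-wise $\nu$-maximum.

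For uniqueness, if $\bt^\ast$ and $\bt^{\ast\ast}$ are both such maxima, Lemma \ref{sum1} exhibits their coordinate-wise $\nu$-join $(\widehat{\bt^\ast_i+\bt^{\ast\ast}_i})_i$ as an element of $\mathcal D_{I'}$; by maximality both $\bt^\ast$ and $\bt^{\ast\ast}$ are $\nucong$ to this join in each coordinate, so $\bt^\ast_i\nucong\bt^{\ast\ast}_i$ for all $i$, which is precisely the equivalence of Remark~\ref{oneone}. Support preservation was noted above: $\bt^\ast_i\ge_\nu\a_i$ is tangible for $i\in I'$, while $\bt^\ast_k=\rzero$ for $k\notin I'$ by definition of $\mathcal D_{I'}$.

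The main obstacle is showing the coordinate-wise sup over $\mathcal D_{I'}$ is actually attained rather than merely approached: the ordered group $\tG$ of a general supertropical semifield need not be order-complete, so an abstract Zorn or sup argument is inadequate. The escape route is the combinatorial finiteness of the defining ghost-dominance patterns at each of the $n$ coordinates, which reduces the problem to a finite case analysis of which extremal equalities are binding; at that point Lemma \ref{sum1} lets me assemble the genuine coordinate-wise maximum as a finite join of witnesses, one for each binding constraint.
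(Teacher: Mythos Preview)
Your uniqueness argument is correct and is exactly what the paper does: two saturated solutions would have their join (via Lemma~\ref{sum1}) strictly above one of them, contradiction.

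The existence argument has a real gap. First, a small point: your claim that irredundancy forces each $w_i$ to have a tangible entry is false. If $w_i\in\tGz^{(n)}$ then $\a_i w_i$ can dominate $v_j$ at some $j$ and be the sole reason the $j$-th sum is ghost, so dropping $w_i$ destroys the relation. The correct reason each $w_i$ has a tangible entry is the (implicit) hypothesis that $S$ is tropically independent, which is built into the definition of ``saturated''; you never invoke independence, and without it the maximum genuinely need not exist (e.g.\ $n=1$, $v=(\fzero)$, $w_1=(\fzero^\nu)$).

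More seriously, the displayed inequality you derive bounds $\nu(\bt_i)$ only in terms of the \emph{other} $\nu(\bt_{i'})$, so the constraints are coupled and give no absolute ceiling on any single coordinate. Your proposed fix --- ``push each $\bt_i$ to the smallest binding constraint, join extreme points, iterate through finitely many binding positions'' --- is not a procedure: you have not defined the extreme points, explained why there are finitely many, or shown the iteration terminates at a global maximum rather than cycling among the coupled inequalities. The observation that there are finitely many dominance \emph{patterns} per coordinate does not by itself yield finitely many candidate maxima in $\tG^\ell$; a join-closed finite union of polyhedra in a non-complete ordered group can still fail to attain its coordinate-wise sup unless you exhibit specific witnesses.

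The paper breaks this circularity by a different mechanism. It classifies each position $j\le n$ as \emph{type~1} ($v_j$ not dominated by the $\a_i w_{i,j}$) or \emph{type~2} (dominated), notes that increasing coefficients never converts type~2 to type~1, and then among all dependences with the given support picks one with the \emph{fewest} type~1 positions --- a finite minimization, hence attained. At a type~1 position $j$ the single $w_i$ with $\a_i w_{i,j}\nucong v_j$ is ``anchored'': its coefficient cannot be increased in any $\nu$-larger dependence without converting $j$ to type~2 and contradicting minimality. Thus the anchored coefficients are already at their maximum values, determined by $v$ alone. One then absorbs the anchored terms into $v$ and recurses on the remaining $w_i$, reducing $\ell$. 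This induction is what replaces your undefined iteration and supplies the missing absolute bounds.
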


\begin{rem}\label{sat10} When the vector  $v$ is tangible, and $S$ is a d-base, Theorem \ref{sat1}
is an immediate consequence of \cite[Theorems~3.5
and~3.8]{IzhakianRowen2009Equations}, which shows that $A x \lmodg
v $ has the maximal tangible vector solution $x = \hnu (A^\nb v)$
(where $A^\nb = \frac 1{|A|}\adj{A}$),  which in view of
Lemma~\ref{surpass2}   is also a solution for  $ Ax \hsim v$. Here
we take $A$ to be the matrix of $ S,$ which is nonsingular,  and
$x$ to be the vector $(\al_1, \dots, \al_\ell)^\trn$.

In general, $x = A^\nb v$ is a solution for the matrix equation $
A x \lmodg v,$ which, when $v$ is written as a row,   is $x
A^{\operatorname t} \lmodg v.$ (In a sense, row form is more
natural, since the matrix of $S$ is obtained from the rows.) But
this vector need not be tangible.
\end{rem}

Here is a  direct combinatoric proof of Theorem~\ref{sat1} that
does not rely on matrix theory, and does not depend on the
additional assumption of tangibility of $ S $.
\begin{proof}  Uniqueness of a saturated tangible solution is obvious, since one could just take the sup of
any two distinct saturated tropical dependences to get a
contradiction. This also gives the motivation for proving
existence. Write $v = (v_1, \dots, v_n).$ We start with some
tropical dependence \eqref{dep1}, which need not be saturated,
with the aim of checking whether we can modify it until it is
saturated. In principle, we could increase the $\nu$-values of the
coefficient $\a _i$  if at each component $j$ of the vector $\a _i
w_i$ the $\nu$-value of $v_j$ is not attained, and this is the
main idea behind the proof. But increasing $\a_i$ still may not
yield a saturated tropical dependence, since the coefficient may
be allowed to increase further, so long as some other term in the
tropical dependence also is adjusted so as to have a $j$-component
of the same $\nu$-value. Since these $j$-components are the most
difficult to keep track of,  we  pay special attention to them.
Write $w_{i,j}$ for the $j$-component of $w_i$.

We say that an index $j \le n$  has
\textbf{type 1} if $v_j$ is not dominated by $\sum \a_j w_{i,j},$ which means that either $v_j$ itself is ghost, or
else precisely one $w_i$ has $\a_iw_{i,j}$ matching $v_j$ and this
$w_{i,j}\in \tT$.

We say that $j$  has \textbf{type 2} for $v$ if $v_j$ is
 dominated by $\sum \a_j w_{i,j},$ which means that either
there exists~$i$ such that  $w_{i,j}$ is ghost and dominates $v_j$
or there are
   $ i,  i'$ such that both $ \a_iw_{i,j}$  and
$ \a_{i'}w_{i',j}$ dominate~$v_j$.

Note that increasing the coefficients $\a_i$ in a tropical
dependence cannot change the type of an index~$j$ from type 2 to
type 1. Also, at least one index must have type 1, since otherwise
$\sum \a_iw_{i,j}\in \tGz^{(n)},$ contrary to the hypothesis that
the $w_i$ are tropically independent. We choose our tropical
dependence such that the number of indices of type 1 is minimal.
In this case, if $ \a_iw_{i,j}$ $\nu$-matches $v_j$ for $j$ of
type~1, we cannot find a $\nu$-greater tropical dependence in
which $\a_i$ is increased, since this would force the tropical
dependence to have an extra type 2 index. Thus, in this case we
say $w_i$ is \textbf{anchored} at~ $j$. Renumbering the vectors,
we may assume that $w_1, \dots, w_k$ are anchored at various
indices, and replace $v$ by $v' = v+ \sum_{i=1}^k \a_i w_i.$ Now
we have a new tropical dependence $v' + \sum _{i=k+1}^\ell \a_i
w_i \in \tGz^{(n)},$ which by induction on~$\ell$ can be increased
to a saturated tropical dependence
$$v' \hsim  \sum _{i=k+1}^\ell \a_i' w_i  .$$ But then the
tropical  dependence
$$v \hsim \sum _{i= 1}^k
\a_i w_i +\sum _{i=k+1}^\ell \a_i' w_i $$ is saturated, since
$w_1, \dots, w_k$ are anchored.
\end{proof}

\begin{prop}\label{sumdep} If \begin{equation}\label{dep33} v\hsim  \sum _{i=1}^\ell \a_i w_i , \qquad v'\hsim  \sum _{i=1}^\ell \a'_i w_i \end{equation} are saturated tropical  dependences, then
\begin{equation}\label{dep34} v+ v'\hsim  \sum _{i=1}^\ell \widehat{(\a_i
+\a'_i)}  w_i \end{equation} also is a saturated tropical
dependence.
\end{prop}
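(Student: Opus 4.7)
The plan is to split the proof into (a) showing that $\gm := (\widehat{\a_i+\a'_i})_i$ gives a tropical dependence for $v+v'$, and (b) showing that this dependence is saturated.

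For (a), I would first add the two given saturated dependences, yielding
\[
(v+\sum \a_i w_i)+(v'+\sum \a'_i w_i) \;=\; v+v'+\sum(\a_i+\a'_i)w_i \;\in\;\tHz
\]
(sum of ghosts is ghost). It remains to replace each coefficient $\a_i+\a'_i$ by $\widehat{\a_i+\a'_i}$. I would work componentwise in $\Fz^{(n)}$: at each index $j$, the two sums have term-by-term $\nu$-matched entries, the only difference occurring when $\a_i\nucong \a'_i$ are both tangible (so that $(\a_i+\a'_i)w_{i,j}$ is ghost while $\widehat{\a_i+\a'_i}w_{i,j}$ is tangible precisely when $w_{i,j}\in\tT$). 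If the $j$-th component of the combined sum is ghost because its $\nu$-maximum is attained at two or more positions, or by a ghost at a non-problematic position, the same holds for the new sum. The only threat is when the unique $\nu$-maximum sits at a problematic position; but then $\a_i w_{i,j}$ would be the unique $\nu$-maximal, tangible term in $v_j+\sum_i \a_i w_{i,j}$, contradicting $v+\sum \a_i w_i\in\tHz$. This rules the threat out and establishes (a).

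For (b), suppose $\bt=(\bt_i)\in\tTz^\ell$ satisfies $v+v'+\sum\bt_i w_i\in\tHz$; we must show $\bt_i\le_\nu \gm_i$ for every $i$. By Lemma~\ref{sum1} applied to the vector $v+v'$, the coefficient vector $\widehat{\bt+\gm}$ also yields a dependence for $v+v'$, so without loss of generality $\bt_i\ge_\nu \gm_i$ for all $i$; we aim at equality. Assume, for contradiction, $\bt_{i_0}\nug \gm_{i_0}$ for some $i_0$; then $\bt_{i_0}\nug \a_{i_0}$ and $\bt_{i_0}\nug \a'_{i_0}$. Adding the saturated $v$-dependence to the $\bt$-dependence gives
\[
v^\nu+v'+\sum(\a_i+\bt_i)w_i\;\in\;\tHz.
\]
The plan is then to deduce the \emph{reduced} relation $v'+\sum(\a_i+\bt_i)w_i\in\tHz$. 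Once this is known, saturation of $\a'$ forces $\a_i+\bt_i\le_\nu \a'_i$ for each $i$, hence $\bt_{i_0}\le_\nu \a'_{i_0}\le_\nu \gm_{i_0}$, contradicting $\bt_{i_0}\nug \gm_{i_0}$.

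The hard part will be justifying the cancellation of $v^\nu$. Componentwise, at indices $j$ where $v_j^\nu$ is not the unique $\nu$-maximum of $v_j^\nu+v'_j+\sum(\a_i+\bt_i)w_{i,j}$, cancellation is immediate. At indices where $v_j^\nu$ uniquely dominates, an argument parallel to the threat-removal of part (a) shows $v_j$ itself is the unique $\nu$-maximum of $v_j+\sum\a_i w_{i,j}$, forcing $v_j\in\tGz$; one then combines with the relation $v+v'^\nu+\sum(\bt_i+\a'_i)w_i\in\tHz$ (obtained by adding the $v'$- and $\bt$-dependences) to conclude ghostness of the reduced sum at $j$. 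Verifying this last step rigorously, controlling the dominance relationships among $v_j, v'_j,\a'_i w_{i,j}$, and $\bt_i w_{i,j}$, is the main technical obstacle; the matrix perspective of Remark~\ref{sat10} (where $\gm=\hnu(A^\nb(v+v'))$ by the linearity of $A^\nb$) serves as a useful sanity check that the result should hold.
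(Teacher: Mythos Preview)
Your part (a) is sound: the componentwise threat-removal argument correctly shows that replacing $(\a_i+\a'_i)$ by $\widehat{\a_i+\a'_i}$ preserves ghostness, and the paper in fact takes this step for granted.

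Part (b), however, has a genuine gap precisely where you flag it. The cancellation of $v^\nu$ from
$v^\nu+v'+\sum(\a_i+\bt_i)w_i\in\tHz$
to obtain $v'+\sum\bt_i w_i\in\tHz$ (or anything $\nu$-equivalent) does not follow from the local information you have at an index $j$ where $v_j$ uniquely dominates. At such $j$ you correctly deduce $v_j\in\tGz$, but the auxiliary relation $v+v'^\nu+\sum(\bt_i+\a'_i)w_i\in\tHz$ gives nothing new there: since $v_j$ strictly dominates every other term, that relation at $j$ collapses to $v_j\in\tGz$, which you already knew. Meanwhile $v'_j+\sum\bt_i w_{i,j}$ is a sum of terms all strictly $\nu$-below $v_j$, and nothing in your hypotheses forces it to be ghost; the saturated $v'$-dependence controls $\sum\a'_i w_{i,j}$, not $\sum\bt_i w_{i,j}$, and $\bt_i$ can be strictly $\nu$-larger than $\a'_i$. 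So the reduction to saturation of $\a'$ does not go through, and you have not produced the contradiction.

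The paper avoids this obstacle by a structurally different argument, recycling the proof of Theorem~\ref{sat1} rather than arguing by direct contradiction. It classifies indices as type~1 or type~2 for the $\gm$-dependence of $v+v'$, identifies the anchored vectors $w_1,\dots,w_k$, and then invokes Remark~\ref{sat} to see that
\[
v+\sum_{i=1}^k\a_i w_i\ \hsim\ \sum_{i=k+1}^\ell\a_i w_i,\qquad
v'+\sum_{i=1}^k\a'_i w_i\ \hsim\ \sum_{i=k+1}^\ell\a'_i w_i
\]
are themselves saturated. Induction on~$\ell$ (now with $\ell-k$ terms) then shows that the $\gm$-dependence of $v''=v+v'+\sum_{i\le k}\gm_i w_i$ on $w_{k+1},\dots,w_\ell$ is saturated, and the anchored coefficients $\gm_1,\dots,\gm_k$ are already maximal by the type-1 analysis. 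This anchoring-plus-induction scheme never needs to cancel a ghost summand globally; that is what makes it work where your direct approach stalls.
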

\begin{proof} Again we have two proofs, the first using results from \cite{IzhakianRowen2009Equations} in the case when $v,v'$ are
tangible
 and the
matrix $A$ of the $w_i$ is nonsingular. In the first case, one just takes the
solutions $x = \Inu{A^\nb v}$ and $x' = \Inu{A^\nb v'}$ for the
vectors of the $\a_i$ and the $\a_i'$, and then note that $$\hnu
(A^\nb v+ A^\nb v')= \hnu (A^\nb (v+   v')).
$$

For the general case, one needs to modify the second proof of
Theorem~\ref{sat1} for the vector $v+v'$. Namely, consider the
tropical dependence $$ v+ v'\hsim  \sum _{i=1}^\ell \gm _i w_i
,$$ where $\gm _i= \widehat{(\a_i +\a'_i)}.$ At least one index in
this tropical dependence must have type 1 for $v+v'$, since
otherwise  the $w_i$ are tropically dependent. We choose our
tropical dependence such that the number of indices of type 1 is
minimal. As before, if $ \gm _i w_{i,j}$ $\nu$-matches $v_j$ for
$j$ of type 1 we cannot find a larger tropical dependence in which
$\gm _i$ is increased, so $w_i$ is anchored at $j$. Again, we may
assume that $w_1, \dots, w_k$ are anchored at various indices, and
replace $v+v'$ by $$v'' = v+v'+ \sum_{i=1}^k \gm _i w_i .$$ But
$$ v+ \sum_{i=1}^k \a_i w_i \hsim \sum _{i=k+1}^\ell \a_i w_i   \qquad \text{and} \qquad  v'+ \sum_{i=1}^k \a_i'
w_i\hsim \sum _{i=k+1}^\ell \a_i' w_i $$ are saturated tropical
dependences by Remark~\ref{sat}, so, by induction on~$\ell$,
$$v'' \hsim  \sum _{i=k+1}^\ell \gm _i w_i  $$   is a
saturated tropical dependence. But then the tropical dependence
$$v \hsim  \sum _{i= 1}^k
\gm _i w_i +\sum _{i=k+1}^\ell \gm _i w_i  $$ is saturated.
\end{proof}

\section{Tropical spanning}

In this section, we continue to consider the fundamental question
of what ``base'' should mean for supertropical vector spaces. The
d-base (defined above) competes another notion to be obtained from
$\lmodh$. But at the moment we turn to the naive analog from the
classical theory of linear algebra.

\subsection{Classical bases}\label{ssec:4.1}
\begin{defn}\label{def:classicalBase} A module $V$ over a semiring~$R$ is \textbf{classically spanned}
by a set  $S = \{ w_i: i \in I \}$ if~every element of~$V$ can be
written in the form $$v=\sum_{i \in J} r_i w_i,$$ for $r_i \in R$
and some finite index set $J \subset I$.

A set $\base = \{b_1, \dots, b_n\}\subset V$ is a
\textbf{classical base} of a module $V$ over a semiring $R$, if
every element of~$V$ can be written uniquely in the form $\sum_{i
=1}^n r_i b_i$, for $r_i \in R.$
In this case, we say that $V$ is \textbf{classically free} of
\textbf{rank} $n$.
\end{defn}

For example, the \textbf{standard base} of $\Rz  ^{(n)}$ is the
classical base defined as
\begin{equation}\label{eq:standardBase}
\varepsilon_1 = (\rone,\rzero, \dots, \rzero), \quad \varepsilon_2
= (\rzero,\rone,\rzero, \dots, \rzero), \quad \dots, \quad
\varepsilon_n = (\rzero,\rzero, \dots, \rone).
\end{equation}

\begin{prop} If $V$ is classically free of rank $n$, then $V$ is
isomorphic to $\Rz  ^{(n)}$. \end{prop}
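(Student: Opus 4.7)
The plan is to write down the obvious candidate isomorphism and check the axioms. Fix a classical base $\tB = \{b_1, \dots, b_n\}$ of $V$ and define
\[
\varphi : \Rz^{(n)} \to V, \qquad \varphi(r_1, \dots, r_n) = \sum_{i=1}^n r_i b_i .
\]
First I would verify that $\varphi$ is an $R$-module homomorphism. Additivity is immediate from the distributive law in $V$, and compatibility with scalar multiplication follows from Definition~\ref{def:module0}(1)--(3); the normalizations $\rzero v = \zero_V$ and $\one_R v = v$ guarantee that the standard base vectors $\varepsilon_j$ of $\Rz^{(n)}$ are sent to $b_j$, which will pin down the map.

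Next I would establish bijectivity directly from Definition~\ref{def:classicalBase}. Surjectivity is just the ``classically spanned'' half of the definition: every $v\in V$ can be written as $\sum_i r_i b_i = \varphi(r_1,\dots,r_n)$. Injectivity is where the hypothesis that $\tB$ is a classical base (as opposed to just a spanning set) is used: if $\varphi(r_1,\dots,r_n) = \varphi(r'_1,\dots,r'_n)$, then $\sum_i r_i b_i = \sum_i r'_i b_i$ are two expressions of the same element of $V$ in the base $\tB$, so the uniqueness clause forces $r_i = r'_i$ for each $i$, i.e.\ $(r_1,\dots,r_n) = (r'_1,\dots,r'_n)$.

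I expect no real obstacle here; the statement is essentially a tautological unpacking of Definition~\ref{def:classicalBase}. The only subtle point worth flagging is that uniqueness of representation in the base must be used honestly, since in the supertropical setting one cannot appeal to cancellation or additive inverses to deduce $r_i = r'_i$ from $\sum r_i b_i = \sum r'_i b_i$. That is precisely why ``classical base'' was defined with uniqueness built in, rather than being derived, as it would be in the ring-theoretic setting, from linear independence. With uniqueness stated as an axiom, the proof reduces to the two-line verification above.
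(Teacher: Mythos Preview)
Your proposal is correct and follows exactly the approach the paper indicates: define $\varphi(r_1,\dots,r_n)=\sum_i r_i b_i$, check the module homomorphism axioms, and read off surjectivity and injectivity from the two clauses (existence and uniqueness) in Definition~\ref{def:classicalBase}. The paper records only the map and calls the proof standard; your added remark that uniqueness must be invoked directly because cancellation is unavailable is accurate and worth keeping.
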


The proof is standard; taking a classical  base $b_1, \dots, b_n$,
one defines the isomorphism $\Rz  ^{(n)}\to V$ by $$(r_1, \dots,
r_n) \mapsto \sum _{j=1}^n r_j b_j.$$

\subsection{Tropical spanning}


 \begin{defn} A vector $v\in V$ is \textbf{tropically spanned}
 by a  set $S = \{ w_i : i \in I\} \subset V$ if there exists a nonempty finite
subset $I'\subset I$ and a family $\{\a_i : i \in {I'}\}\subset
\tT$, such that

\begin{equation}\label{eq:dependence2} v \lmodh \sum _{i \in I'}\a_i
w_i.
\end{equation}
In this case, we write $v\lmodh S.$

A subset $S' \subseteq V$ is \textbf{tropically spanned}
 by  $S$, written $S' \lmodh S,$ if $v\lmodh S$ for each $v\in S'$.
\end{defn}

\begin{rem}[Transitivity  for tropically spanning]\label{transit}
 If $V\lmodh W$ and $W\lmodh U$, then $V\lmodh U$.\end{rem}

Obviously, any set classically spanned by $S$ is tropically spanned; surprisingly, the converse often holds.

\begin{rem}\label{rmk:span} $ $
\begin{enumerate} \eroman
  \item
Any   element  tropically spanned by $S = \{w_i: i \in I\}$ is
  tropically dependent on $S$. \pSkip

   \item If an \atangible\ vector $v\in V$ is
  tropically spanned by a set $S\subset V$, then $v$ is classically spanned by
  ~$S$. \pSkip

  \item  The assertion (ii) can fail for nontangible $v\in \Rz  ^{(n)}$; take $S = \{(\rone, \rone)\} \subset \Rz ^{(2)}$,
viewed as an $R$-module, then $(\rone, \rone^\nu)$ is tropically
spanned by~$S$, but not classically spanned by $S$. \pSkip

\item  If $V$ has a classical spanning set $\mathcal B$ of
\atangible\ vectors, and $\mathcal B$ is tropically spanned by a
set $S$, then $V$ is classically
  spanned by $S$, by (ii) and
  transitivity.
  In particular, if $\Rz  ^{(n)}$ is tropically spanned by a set $S$, then $\Rz
^{(n)}$ is classically
  spanned by $S$, since $\Rz  ^{(n)}$ has the standard base. \pSkip

\item Any element tropically spanned by $S$ is also tropically
    dependent on $S$, but not conversely; for example $v= (\rone,\rone) \in
    R^{(2)}$
    is tropically dependent on $S = \{ (\rone,\rone^\nu) \} \subset R^{(2)}$,
    viewed as $R$-module, but $v$ is not tropically spanned by $S$. This leads to an interesting dichotomy to be studied
shortly.\pSkip

\end{enumerate}
\end{rem}

Thus, we see that \atangible\ vectors already begin to play a
special role in the theory of tropical dependence.

\begin{remark}\label{rmk:tnrSpan} Tropical spanning does not satisfy the assertion analogous
to  Lemma \ref{sum1}. For example, take $$ \{ w_1 = (1,2), w_2 =
(1,3) \} \subset D(\Real)^{(2)}$$ and the vector $v = (1,3^\nu)$,
then $v \lmodh w_1$ and $v  \lmodh  w_2$, but $v \nlmodh  w_1 +
w_2 = (1^\nu, 3)$.
\end{remark}

\begin{lem}\label{lem:span} $W = \{ v\in V: v\lmodh S\}$ is a subspace of
$V$ for any $S \subset V$.
\end{lem}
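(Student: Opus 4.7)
The plan is to verify the two submodule closure axioms for $W$; containment $\vzero \in W$ is by the standard convention that the empty tangible combination represents $\vzero$ (equivalently, once closure under scalar multiplication is established, $\vzero = \rzero\cdot v$ for any $v\in W$). The backbone of both verifications is the \emph{bilinearity of $\lmodh$}: if $v \lmodh x$ and $v' \lmodh x'$ in $V$, then $v+v' \lmodh x+x'$ and $rv \lmodh rx$ for every $r \in R$. Both assertions are immediate from the definition $v \lmodh x \Leftrightarrow v = x+h$ with $h \in \tHz$, together with the fact that $\tHz$ is an $R$-submodule of $V$ (so closed under addition and under scalar multiplication).

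For closure under addition, I would take $v_1, v_2 \in W$ with spanning expressions $v_j \lmodh T_j := \sum_i \a_i^{(j)} w_i$, where $w_i \in S$ and $\a_i^{(j)} \in \tT$ (padding with $\rzero$ to share a common finite index set). Bilinearity then yields $v_1+v_2 \lmodh T_1+T_2 = \sum_i (\a_i^{(1)}+\a_i^{(2)}) w_i$. The only obstacle is that some combined coefficients $\a_i^{(1)}+\a_i^{(2)}$ may land in $\tG$, violating the tangibility requirement. For each such index $i$, put $\gamma_i := \widehat{\a_i^{(1)}+\a_i^{(2)}} \in \tT$; the ghost-absorption identity
\[
(\a_i^{(1)}+\a_i^{(2)})\,w_i \;=\; \gamma_i^\nu w_i \;=\; (\gamma_i w_i)^\nu \;\in\; eV \;\subseteq\; \tHz
\]
then pushes these offending terms into the $\tHz$ piece. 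What survives is a legitimate tangible combination $T'$ satisfying $v_1+v_2 \lmodh T'$ by transitivity of $\lmodh$, and hence $v_1+v_2 \in W$.

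For closure under scalar multiplication, starting from $v \in W$ with $v \lmodh T = \sum_i \a_i w_i$ and $r \in R$, bilinearity gives $rv \lmodh rT = \sum_i (r\a_i) w_i$. If $r \in \tT$, the coefficients $r\a_i$ remain in $\tT$ and we are done. If $r \in \tG$, each $(r\a_i)w_i = (\widehat{r\a_i}\, w_i)^\nu \in eV \subseteq \tHz$, so setting $y := \sum_i \widehat{r\a_i}\, w_i$ and invoking the defining axiom of Definition~\ref{def:module} (which yields the identity $y + y^\nu = y^\nu$), one obtains
\[
rv \;=\; y^\nu + rh \;=\; y + (y^\nu + rh),
\]
with $y^\nu + rh \in \tHz$. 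Thus $rv \lmodh y$ with tangible witness, placing $rv$ in $W$. The whole argument is routine; the only mildly delicate maneuver, and the main obstacle if any, is the ghost-absorption identity $\gamma^\nu w = (\gamma w)^\nu \in eV$, which converts a would-be ghost coefficient into an honest member of $\tHz$ and thereby preserves the tangibility constraint built into the definition of tropical spanning.
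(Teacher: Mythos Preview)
Your proof is correct and follows essentially the same route as the paper's: add the two tangible combinations, observe that the coefficients which become ghost (those with $\a_i^{(1)}\nucong\a_i^{(2)}$) can be absorbed into the $\tHz$ part, and keep the remaining tangible coefficients as the new witness. The paper phrases this via bipotence in the supertropical semifield (so that $\a_i^{(1)}+\a_i^{(2)}\in\{\a_i^{(1)},\a_i^{(2)}\}$ when they are not $\nu$-matched, and $=(\a_i^{(1)})^\nu$ otherwise), while you phrase it via the ghost-absorption identity $(\gamma^\nu)w=(\gamma w)^\nu\in eV\subseteq\tHz$; these are the same maneuver. Your treatment of scalar multiplication is more explicit than the paper's (which simply says ``the other verifications are easier''), and your handling of the case $r\in\tG$ via $y+y^\nu=y^\nu$ is a nice touch that the paper omits.
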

\begin{proof} If $v = \sum _{i \in I}\a_i
w_i +y$ and $v' = \sum _{i \in I}\a_i ' w_i+ z,$ where $\a_i,
\a'_i \in \tT, $ $w_i \in S$ and $y, z \in \tHz$, then letting $J
= \{ i: \a_i \nucong {\a_i'} \},$ we have, by bipotence,
 $$v+v' = \sum
_{i \notin J}   \beta_i w_i +\sum _{i \in J} \a_i ^\nu
 w_i+ (y + z) \lmodh  \sum
_{i \notin J} \beta_iw_i,$$ where $\beta_i \in \{ \a_i
,\a_i'\}\subset \tT.$
 The other verifications are easier.
\end{proof}

We call $W$ (in Lemma \ref{lem:span}) the subspace
\textbf{tropically spanned} by $S$, and say that $S$ is a
\textbf{tropically spanning set} of $W$.

A supertropical vector space is \textbf{finitely spanned} if it
has a finite tropically spanning set.

%

\begin{example}\label{rmk:bases2} Take $R = D(\Real)$, with logarithmic
notation. 
\begin{enumerate} \eroman
\item  The vectors
$$v_1 =(1,0,1), \quad v_2 = (1,1,0), \quad \text{
and }  \ \ v_3 =(0,1,1)$$ are tropically dependent in $
D(\Real)^{(3)}$, since their sum is $(1^\nu,1^\nu,1^\nu)$. None of
these vectors is tropically spanned by the two other vectors.
\pSkip

\item Even when a vector is classically spanned by tropically
independent vectors, the coefficients need not be unique. For
example, $$(4,5) = 2(1,1) + 2(2,3) = 1(1,1) + 2(2,3).$$ The point
of this example is that the first coefficient is sufficiently
small so as not to affect the outcome. \pSkip

\item  Another such example: The vectors
$$v_1 =(-\infty,-\infty,1), \quad v_2 = (1,1,-\infty), \quad \text{
and }  \ \ v_3 =(-\infty,1,1)$$ are tropically independent,
although classical spanning with respect to them (and thus also
tropical spanning) is not unique; e.g., $(3,3,1) = 2v_2 + v_3 =
v_1 + 2v_2$. \pSkip

\item  Another such example: Consider the vectors
$$v_1 =(1,4,3), \quad v_2 = (2,3,4), \quad \text{
and }  \ \ v_3 =(0,20,20).$$ Then $(3,20,20) = 1v_2 + v_3 = 3v_1 +
v_3$.

\item  Another such example: Consider the space $V$ spanned by the
five critical vectors
$$(0,-\infty,0,-\infty,0,-\infty),\quad
 (-\infty,0,-\infty,0,-\infty,0),$$
 $$(0,-\infty,-\infty,0,-\infty,-\infty), \quad
 (-\infty,0,-\infty,-\infty,0,-\infty), \quad
(-\infty,-\infty,0,-\infty,-\infty,0).$$ Then $(0,0,0,0,0,0)$ is
the sum of the first two vectors as well as the last three.

\end{enumerate}

\end{example}

It does not follow from Lemma~\ref{sumofsat} that for $S = \{w_1,
\dots, w_n\}$, there is a $\nu$-maximal set of  $\a_1, \dots,
\a_\ell \in~\tT$ such that $v \lmodh  \sum _{i=1}^\ell \a_i w_i $.
For example, in logarithmic notation take $$v = (1,1), \quad  w_1
= (1,0), \quad \text{and } \ \  w_2 = (1,1).$$ Then $v = \a w_1 +
w_2$ for all $\a < 0,$ but taking $\a = 0$ yields $w_1+w_2 =
(1^\nu,1) $. 

%
%
%

\begin{prop}\label{sd} For any subspace $V$ of $F^{(n)}$, the number of elements of
any tropically spanning set~$\gen$  of $V$ is at least $\rank(V)$.
\end{prop}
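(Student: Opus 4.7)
The plan is to argue by contradiction, reducing the spanning condition on $\gen$ to a tropical dependence among coefficient vectors in $F^{(k)}$, and then exploit the already-known fact that too many vectors in $F^{(k)}$ must be tropically dependent.

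First, choose a d-base $\tB = \{b_1,\dots,b_m\}$ of $V$ realizing $m = \rank(V)$, and let $\gen = \{s_1,\dots,s_k\}$ be a tropically spanning set of $V$. Suppose, for contradiction, that $k < m$. Since $\gen$ tropically spans $V$, for each $j$ we can write
\[
b_j = \sum_{i=1}^k \a_{i,j}\, s_i + h_j,
\qquad \a_{i,j}\in\tTz,\ h_j\in V\cap \tGz^{(n)},
\]
padding with $\rzero$ any indices $i$ not appearing in the chosen expression for $b_j$. Associate to each $b_j$ the coefficient vector $\vec a_j=(\a_{1,j},\dots,\a_{k,j})\in F^{(k)}$.

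Now I would apply the key matrix fact recalled before Proposition~\ref{getbase} (any $k+1$ vectors of $F^{(k)}$ are tropically dependent, via \cite[Corollary~6.7]{IzhakianRowen2008Matrices}): since $m>k$, the vectors $\vec a_1,\dots,\vec a_m$ are tropically dependent in $F^{(k)}$. Hence there are $\bt_1,\dots,\bt_m\in \tTz$, not all $\rzero$, such that for every $i$,
\[
g_i \;:=\; \sum_{j=1}^m \bt_j\, \a_{i,j} \;\in\; \tGz.
\]

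Next I would transport this dependence back to the $b_j$'s. Computing,
\[
\sum_{j=1}^m \bt_j\, b_j \;=\; \sum_{i=1}^k g_i\, s_i \;+\; \sum_{j=1}^m \bt_j\, h_j.
\]
The second sum lies in the ghost submodule since each $h_j$ does. For the first, note that $\tGz \cdot V\subseteq \tHz$: any $g\in\tGz$ has the form $g=er$ for some $r\in F$, so $g v=r(ev)=r\,v^\nu\in\tHz$ because $v^\nu\in\tHz$ and $\tHz$ is a submodule. Hence each $g_i s_i$ is ghost, and the whole right-hand side lies in $\tHz$. This yields a tropical dependence of $b_1,\dots,b_m$ with tangible coefficients not all zero, contradicting the tropical independence of the d-base $\tB$. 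Therefore $k\ge m=\rank(V)$.

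The only delicate point is bookkeeping the $\lmodh$ relation: one must verify that the ghost ``error terms'' $h_j$ and the scalars $g_i\in\tGz$ both produce genuinely ghost contributions in $V$, which is why the remark $\tGz V\subseteq \tHz$ (a consequence of $\tHz\supseteq eV$) is essential. Once that is in hand, the argument is a clean reduction to the matrix-theoretic Corollary~6.7 of \cite{IzhakianRowen2008Matrices} applied to the coefficient vectors in $F^{(k)}$.
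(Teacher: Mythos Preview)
Your argument is correct. The one omission is that you tacitly assume $\gen$ is finite when you write $\gen=\{s_1,\dots,s_k\}$; but since $\rank(V)\le n$ is finite by Corollary~\ref{sync}, the infinite case is trivial, so this is harmless.

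Your route differs from the paper's. The paper first projects: it picks a nonsingular $m\times m$ submatrix of the matrix of the chosen d-base and erases the other columns, thereby reducing to $V\subseteq F^{(m)}$ with $m=n$; it then argues, via the matrix factorization implied by $v_i\lmodg\sum\a_{i,j}s_j$, that some $m$ of the $s_j$ themselves form a nonsingular (hence tropically independent) set, so $|\gen|\ge m$. Your proof avoids the column-projection step entirely and works instead with the coefficient vectors $\vec a_j\in F^{(k)}$, invoking the same basic fact (more than $k$ vectors in $F^{(k)}$ are tropically dependent) and then pushing the resulting dependence back through the spanning relations. Your approach is arguably more transparent and self-contained; the paper's approach has the minor bonus of actually exhibiting $m$ tropically independent elements inside $\gen$, a slightly stronger conclusion than the bare cardinality bound.
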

\begin{proof} Take a d-base $\{ v_1, \dots, v_m \}$ of $V,$ where
$m = \rank(V)\le n.$  By \cite[Theorem
3.4]{IzhakianRowen2009TropicalRank}, the $m\times n$ matrix  whose
rows are $v_1, \dots, v_m$ has rank $m$. Taking a nonsingular
$m\times m$ submatrix and erasing all the $n-m$ columns  not
appearing in this submatrix, we may assume that $m=n$ (since we
still have a supertropically generating set which we can shrink to
a minimal one).

Writing $v_i \lmodg \sum \a_{i,j} s_j$ for suitable $s_j \in
\gen,$ we see that some matrix whose rows are various $s_j$ is
nonsingular, implying that some subset of $m$ vectors of $\gen$ is
tropically independent, and thus $|\gen | \ge m.$
\end{proof}

\subsection{s-bases}

We are ready for another version of base.

\begin{defn}\label{def:genreating} An \textbf{s-base} (for {\it supertropical base})
of a supertropical vector space $V$ (over a supertropical
semifield $F$) is a minimal tropical spanning set $\gen$, in the
sense that no proper subset of $\gen$ tropically spans $V$.
\end{defn}

As we shall see in Example \ref{onemoreex} below,    a vector
space with a finite d-base could still fail to have an s-base.
Even when an s-base exists, it could be considerably larger than
any d-base.

\begin{example}\label{exmp:strip1} Elements of a vector space $V$ may be tropically
dependent on a subspace $W$ but  not tropically spanned by $W$, as
indicated in Example \ref{rmk:bases2}(i).
\end{example}

\begin{example}\label{rmk:cbase1} Let V be the subspace of
$\Rz^2$ spanned   by $S = \{(1,1),(1^\nu,1), (1,1^\nu)\}$ in
logarithmic  notation, equipped with the standard ghost module.

Each of these vectors alone comprises a d-base of V, whereas $S$
is an s-base of V.
\end{example}


Note that an s-base $\gen$ need not be finite. On the other hand,
obviously any finite tropical  spanning set contains an s-base, so
any finitely spanned vector space has an s-base. In order to
coordinate the definitions of s-base and d-base we introduce the
following definition.

\begin{defn}\label{def:genreating1} A \textbf{d,s-base} is an s-base which is also a d-base.
A supertropical vector space $V$ is \textbf{finite dimensional} if
it has a finite d,s-base.
\end{defn}

\begin{prop} The cardinality of the s-base $\gen$ of a finite dimensional vector space
$V$ is precisely  $\rank(V)$.\end{prop}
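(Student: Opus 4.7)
The plan is to combine two matching bounds. Let $\tB=\{b_1,\dots,b_k\}$ be a finite d,s-base witnessing finite-dimensionality, so $|\tB|=k$; I will first show $\rank(V)=k$, then show $|\gen|=k$ for an arbitrary s-base $\gen$.

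The key technical ingredient is an abstract version of Proposition~\ref{sd}: if $V$ is tropically spanned by $m$ vectors $w_1,\dots,w_m$, then any tropically independent subset of $V$ has at most $m$ elements. To prove it, given tropically independent $v_1,\dots,v_\ell$, I would write $v_i\lmodh\sum_j\alpha_{i,j}w_j$ with $\alpha_{i,j}\in\tTz$ using the spanning hypothesis. If $\ell>m$, the $\ell$ coefficient rows $(\alpha_{i,1},\dots,\alpha_{i,m})$ lie in $F^{(m)}$, so by \cite[Corollary~6.7]{IzhakianRowen2008Matrices} they are tropically dependent: there exist tangible $\beta_i$, not all zero, with each $\sum_i\beta_i\alpha_{i,j}\in\tGz$. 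Pulling this back yields $\sum_i\beta_i v_i\lmodh\sum_j\bigl(\sum_i\beta_i\alpha_{i,j}\bigr)w_j\in\tHz$, contradicting tropical independence. Applied to $\tB$ (spanning, size $k$), this lemma gives $\rank(V)\le k$; combined with $k=|\tB|\le\rank(V)$, we conclude $\rank(V)=k$.

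For the lower bound $|\gen|\ge k$ I would first argue that $\gen$ is finite. Since $\gen$ tropically spans $V$, each $b_i$ satisfies $b_i\lmodh\sum_{s\in G_i}\alpha_{i,s}s$ for some finite $G_i\subseteq\gen$. Set $G_0=\bigcup_i G_i$, a finite subset of $\gen$. Then $\tB\lmodh G_0$, and by transitivity of tropical spanning (Remark~\ref{transit}), $V\lmodh\tB\lmodh G_0$, so $G_0$ tropically spans $V$. By minimality of $\gen$, $G_0=\gen$, so $\gen$ is finite. The lemma applied to the spanning set $\gen$ then yields $\rank(V)\le|\gen|$, hence $|\gen|\ge k$.

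The main obstacle is the upper bound $|\gen|\le k$, since ``minimal under inclusion'' is a priori weaker than ``minimum cardinality.'' My plan here is a dual coordinate argument: since $\tB$ tropically spans $V$, each $s\in\gen$ satisfies $s\lmodh\sum_i\gamma_{s,i}b_i$. If $|\gen|>k$, invoking \cite[Corollary~6.7]{IzhakianRowen2008Matrices} on the $|\gen|$ coefficient vectors $(\gamma_{s,1},\dots,\gamma_{s,k})\in F^{(k)}$ produces tangible $\delta_s$, not all zero, with $\sum_s\delta_s\gamma_{s,i}\in\tGz$ for all $i$, whence $\sum_s\delta_s s\in\tHz$ is a tropical dependence among the elements of $\gen$. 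The delicate step is to convert this dependence into an honest tropical spanning relation---namely, to identify some $s_0\in\gen$ with $s_0\lmodh\sum_{s\ne s_0}\mu_s s$, which would contradict minimality of $\gen$. Because tropical dependence does not in general entail tropical spanning (Remark~\ref{rmk:span}(v)), I expect this conversion to be the hard part, requiring either the saturated-dependence machinery of Theorem~\ref{sat1} applied to a suitable maximal coefficient in $\sum_s\delta_s s\in\tHz$, or the critical-element characterization of s-bases announced in the abstract, to pinpoint exactly which $s_0$ can be expressed via the others.
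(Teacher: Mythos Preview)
Your reading of the statement is stronger than the paper's. In context---the proposition sits immediately after Definition~\ref{def:genreating1}---the paper takes $\gen$ to be the finite d,s-base witnessing finite-dimensionality, not an arbitrary s-base. Under that reading the proof is a two-liner: $|\gen|\ge\rank(V)$ by Proposition~\ref{sd} (since $\gen$ spans), and $|\gen|\le\rank(V)$ because $\gen$, being a d-base, is a tropically independent set whose size is bounded by the maximum over all d-bases. Your first block (showing $\rank(V)=|\tB|$) already accomplishes exactly this once you identify $\gen$ with $\tB$; everything after that is unnecessary for the paper's intended statement. Your abstract version of Proposition~\ref{sd} is a nice touch, since it avoids the implicit hypothesis $V\subseteq F^{(n)}$ in the paper's formulation.

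If you insist on the stronger reading (arbitrary s-base $\gen$), then you are right that the upper bound $|\gen|\le k$ is the real issue, and your proposed attack has a genuine gap. Producing a tropical dependence $\sum_s\delta_s s\in\tHz$ among elements of $\gen$ does \emph{not} contradict $\gen$ being an s-base: Example~\ref{rmk:cbase1} exhibits an s-base of three vectors in which every pair is already tropically dependent. Minimality of an s-base is about spanning, not independence, and Remark~\ref{rmk:span}(v) shows exactly why dependence cannot be upgraded to spanning in general. The saturated-dependence machinery of Theorem~\ref{sat1} will not rescue this step either, since saturation still only yields a $\hsim$-relation, not a $\lmodh$-relation. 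The correct route to your stronger statement is the uniqueness of s-bases up to scalars (Theorem~\ref{cor:uniquebase}, proved later via the critical-element characterization): once that is available, any s-base $\gen$ has the same cardinality as the d,s-base $\tB$, and you are done.
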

\begin{proof} $|\gen| \ge \rank(V)$ by Proposition~\ref{sd}. But
we get equality, since by definition $\gen$ is itself a d-base.
\end{proof}

\begin{example} Suppose $S$ is a tropically
independent subset of $V$. Then $S$ is a d,s-base of the subspace
of $V$ tropically spanned by $S$. These are the subspaces of
greatest interest to us, and will be studied further, following
Definition~\ref{sdbase0}.
\end{example}

\begin{example} There are four possible
sorts of nonzero subspaces of $\Fz^{(2)}$ tropically spanned by a
set~$S$ of tangible elements over a supertropical semifield $F$,
writing $\{\varepsilon_1 = (\rone, \rzero),\ \varepsilon_2 =
(\rzero, \rone)\}$ for   the standard base: \pSkip
\begin{enumerate} \eroman

\item   The  plane $
\Fz^{(2)}$ itself.\pSkip
\item A half-plane -- of tangible rank 2, having tangible s-base
containing $\varepsilon_1$ or $\varepsilon_2$, as well as  one
tangible element $ \a _1 \varepsilon_1 + \a_2 \varepsilon_2$ for
$\a_1, \a_2 \in \tT$; \pSkip
\item  A planar strip  -- of tangible rank 2, having tangible
s-base $\{ \a _1 \varepsilon_1 + \a_2 \varepsilon_2, \bt _1
\varepsilon_1 + \bt_2 \varepsilon_2\},$ where  $\a_1, \a_2, \bt
_1, \bt _2 \in \tT$; \pSkip
\item A  subspace of tangible rank 1, each pair of whose elements
are tropically dependent. The  tangible vectors are all multiples
of a single vector.  \end{enumerate}

One also has  examples of non-tangibly generated subspaces of
$\Fz^{(2)}$, such as $W = \{ (\a, \a^\nu): \a \in F\}$.
\end{example}

\subsection{Critical elements versus s-bases}

 Since s-bases are
involved in the actual generation of the space, they are more in
tune with the classical theory of convexity, and can be studied
combinatorially. Here is another way to view the s-base, which is
inspired by the  literature on convex spaces. For convenience, we
take $R$ to be a supertropical semifield. We say that two elements
$v, w$ in a supertropical vector space $ V$ are
\textbf{projectively equivalent}, written $ v \sim w$, iff $ v =
\al w$ for some tangible element $\al \in R$. Accordingly, we
define the equivalence class of $v$ as
$$ [v]_\sim := \{ w \in V \ | \ w \sim v\}.$$

\begin{defn}\label{def:critical}
A vector $v$ in a supertropical vector space $V$ is
\textbf{critical} if we cannot write $v \lmodg v_1 + v _2$ for
$v_1,v_2 \in V \setminus [v]_\sim.$ Taking one representative for
each class $[v]_\sim$, a \textbf{tropical critical set} of $V$ is
defined as a set of representatives of all the critical elements
of $V$.
\end{defn}

Critical elements correspond to ``extreme points'' over the
max-plus algebra in \cite{GK}, who show that every point in
$R^{(n)}$ is a linear combination of at most $n+1$ extreme points.
There is a basic connection between criticality and \atangible.

\begin{lem}\label{gsprop} Suppose $v \lmodh \a v +w$ for $\a \in \tT$, $v,w \in V,$ and $v\notin \tHz.$
Then $\a \le_\nu  e$. Furthermore:

\begin{enumerate}
\item If  $\a <_\nu e$, then $v \lmodh w.$ \pSkip

\item Suppose $\a \in \tT_e$; i.e., $\a \nucong  e$. If $w \in
\tHz,$ then $v = \a   v$. For any $w \in V,$   $$v = \a ^2 v +ew'=
\a ^2 v ,$$ where $w' \lmodh w.$ \end{enumerate}
\end{lem}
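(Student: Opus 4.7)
My plan is to unpack the hypothesis $v \lmodh \a v + w$ as $v = \a v + w + h$ with $h \in \tHz$, set $u := w + h$, and then exploit the three possible values of $\rone + \a$ in the supertropical semifield $F$---namely $\a$, $\rone$, or $e$, according as $\a >_\nu \rone$, $\a <_\nu \rone$, or $\a \nucong \rone$---together with Lemma~\ref{MG1} to absorb the residual ghost terms. For the main inequality $\a \le_\nu e$ I argue by contradiction: if $\a >_\nu e$, bipotence in $F$ gives $\rone + \a = \a$, hence by distributivity
\[
v + v \;=\; (\rone + \a) v + u \;=\; \a v + u \;=\; v,
\]
so $v = v^\nu = ev \in eV \subseteq \tHz$, contradicting $v \notin \tHz$.

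Part~(1) is the mirror argument. Since $\a <_\nu e$, bipotence gives $\rone + \a = \rone$, hence $v = (\rone + \a) v = v + \a v$; substituting the hypothesis into the first $v$ on the right and using $\a v + \a v = (\a v)^\nu$ yields $v = (\a v)^\nu + w + h$. Because $(\a v)^\nu \in eV$ and $h \in \tHz$, the sum $(\a v)^\nu + h$ lies in $\tHz$, which exhibits $v \lmodh w$.

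Part~(2) uses $\a \nucong \rone$, which via the supertropicality axiom gives $\rone + \a = e$. Multiplying the hypothesis through by $\a$ and substituting back produces
\[
v \;=\; \a^2 v + (\rone + \a)\, u \;=\; \a^2 v + e w',
\]
where $w' := w + h \lmodh w$. Lemma~\ref{MG1} applied to this identity gives $v + ew' = v$; then applying $\a^2$ and using $(\a^2)^\nu = e$, so that $\a^2(ew') = ew'$, yields $\a^2 v + ew' = \a^2 v$, which is the second displayed equality. For the subcase $w \in \tHz$ one has $u \in \tHz$ and $ew' = u^\nu$; I would chain the three identities $v + u^\nu = v$ (from Lemma~\ref{MG1}), $\a v + u^\nu = \a v$ (by applying $\a$ and noting $\a u^\nu = \a^\nu u = u^\nu$), and $v + u^\nu = \a v + u^\nu$ (from adding $u^\nu$ to the hypothesis and invoking $\rone + e = e$ so that $u + u^\nu = u^\nu$) to conclude $v = \a v$.

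The principal difficulty is purely organizational: matching each of the three cases for $\rone + \a$ with the correct algebraic maneuver (adding $v$ to both sides, multiplying through by $\a$, etc.) so that the residual ghost terms always land in $eV$, where Lemma~\ref{MG1} can absorb them. No combinatorics, matrix theory, or componentwise reduction to $F^{(n)}$ is required---the entire argument lives inside the module-with-ghosts axioms together with bipotence in $F$.
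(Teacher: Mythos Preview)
Your proof is correct and follows essentially the same route as the paper: unpack the hypothesis as $v=\alpha v+w'$ with $w'\lmodh w$, then split into the three cases $\alpha>_\nu e$, $\alpha<_\nu e$, $\alpha\nucong e$ according to the value of $\rone+\alpha$, using the same substitutions. Your explicit invocation of Lemma~\ref{MG1} in part~(2) is a bit cleaner than the paper's chain, and in fact your argument for the $w\in\tHz$ subcase never actually uses that $u\in\tHz$---the identity $u+eu=(\rone+e)u=eu$ holds for arbitrary $u$---so you have really shown $v=\alpha v$ for all $w$, which is slightly stronger than stated (and immediately yields $v=\alpha^2 v$ as well).
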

\begin{proof} Write $v = \a v +w',$ where $w' \lmodh w.$

(1):  If $\a >_\nu  e,$ then $$
\begin{array}{lll}
v  & =  \a v +w'  =  (\a+\fone )v +w' & \\ &  = v + \a v +w'
 = v+v = ev\in \tHz,  \end{array}$$ a contradiction. Hence, $\a \le_\nu  e$.

  If $ \a  <_\nu e$, then $ \a = \a+\fone$, implying $$v = (
\a+\fone )v = \a v +v =  \a v+ \a v+w' = e \a v +w' \lmodh w,$$
proving (1).

(2): Thus, we assume that $ \a \in \tT_e$. If   $w = ew,$  then
$$
\begin{array}{lll}
 v  & = \a v + w' = \a (\a v + w') +w'  &  \\
   & = \a ^2 v + (\a  + \fone ) w' = \a
^2 v + e w' \\ & = \a (\a v + ew') = \a v. &  \\
\end{array}
 $$ For any $w,$ if $\a \in
\tT_e$, then  $$v = \a v + w' = \a (\a v + w') +w' = \a ^2 v + (\a
+ \fone ) w' = \a ^2 v + e w'.$$ Hence, $v = \a ^2 v$ by the
previous assertion.
\end{proof}

\begin{prop} Any critical element $v\in V$ is \atangible.
\end{prop}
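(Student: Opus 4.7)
The plan is to prove the contrapositive: assume $v$ is not almost tangible, so there exists $w \in V$ with $v \lmodh w$ and $w \notin \tT_e v$; write $v = w + h$ with $h \in \tHz$. I aim to show that $(v_1, v_2) = (w, h)$ is itself a decomposition witnessing failure of criticality, i.e., that both $w$ and $h$ lie in $V \setminus [v]_\sim$.

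First I would dispose of the nonzero ghost case separately. If $v \in \tHz \setminus \{\vzero\}$, then $v = \vzero + \vzero$ is already the desired decomposition: $v \in \tHz$ gives $v \lmodh \vzero$, and $\vzero \notin [v]_\sim$ because no tangible scalar can annihilate a nonzero $v$ (the set $\tT$ being a group in the semifield $F$). Henceforth assume $v \notin \tHz$ (the case $v = \vzero$ being trivially almost tangible).

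For $w \notin [v]_\sim$: were $w = \alpha v$ with $\alpha \in \tT$, then $v = \alpha v + h$ combined with $v \notin \tHz$ would force $\alpha \le_\nu e$ by Lemma~\ref{gsprop}. The strict case $\alpha <_\nu e$ yields, via part~(1) of that lemma, $v \lmodh h \in \tHz$, hence $v \in \tHz$, absurd; so $\alpha \nucong e$, i.e.\ $\alpha \in \tT_e$, giving $w = \alpha v \in \tT_e v$ and contradicting the standing hypothesis. For $h \notin [v]_\sim$: were $h = \beta v$ with $\beta \in \tT$, then since $\tHz$ is an $F$-submodule of $V$ and $\beta^{-1}$ exists in the semifield $F$, we would get $v = \beta^{-1}(\beta v) = \beta^{-1}h \in \tHz$, again absurd. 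Thus $v \lmodh w + h$ with both summands outside $[v]_\sim$, violating criticality.

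The main obstacle is noticing that the surpass decomposition $v = w + h$ handed to us by the very hypothesis is itself the right decomposition to test against criticality. Once that is seen, the $w$-sub-claim reduces to a direct application of Lemma~\ref{gsprop}, and the $h$-sub-claim reduces to the observation that the ghost submodule $\tHz$, being an $F$-submodule, is stable under multiplication by invertible tangible scalars — a step that essentially uses that $F$ is a semifield rather than merely a domain.
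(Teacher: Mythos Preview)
Your proof is correct and follows essentially the same route as the paper's: both exploit the decomposition $v = w + h$ (with $h \in \tHz$) coming from failure of almost tangibility, and both invoke Lemma~\ref{gsprop} to rule out $w \in [v]_\sim$ in the non-ghost case. Your contrapositive organization and your handling of the ghost case via the trivial decomposition $v \lmodh \vzero + \vzero$ are minor variations---arguably cleaner than the paper's treatment of the case $v \in \tHz$.
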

\begin{proof} Otherwise $v = w + w'$ for suitable $w  \in V$,
$w' \in \tHz,$ for which $w \ne v,$ but by criticality, $w = \a v$
for $\a \in \tT.$ First assume that $v\notin \tHz.$ Then, by
Lemma~\ref{gsprop}, $\a \le_\nu  e,$ and furthermore $\a \in
\tT_e,$ since otherwise $v \lmodg ew'$ contrary to $v\notin \tHz.$
But now, by  Lemma~\ref{gsprop}, $v = \a  v = w,$ a contradiction.

Hence we may assume that $v \in \tHz,$ and thus $$w = \a v = (\a
e) v = e v = v,$$ again a contradiction.
\end{proof}

\begin{lem}\label{enoughspan}
 An \atangible\ element $v\in V$ is  critical iff it is not tropically
 spanned by $V \setminus [ v ]_\sim $, i.e. $v
\nlmodh \sum \al_i w_i$ for any $\al_i \in \tT$, $w_i \in V
\setminus  v $.
\end{lem}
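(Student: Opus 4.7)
The plan is to handle the two directions of the iff separately. The reverse direction is immediate: if $v$ is not critical, then by definition $v \lmodh v_1 + v_2$ for some $v_1, v_2 \in V \setminus [v]_\sim$, and this already exhibits $v$ as tropically spanned by $V \setminus [v]_\sim$ (take $\a_1 = \a_2 = \rone$ and $w_i = v_i$).

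For the forward direction, I would argue contrapositively: starting from a tropical spanning relation
\[
v \lmodh \sum_{i=1}^{\ell} \a_i w_i, \qquad \a_i \in \tT,\ w_i \in V \setminus [v]_\sim,
\]
I aim to construct a decomposition $v \lmodh v_1' + v_2'$ with $v_1', v_2' \in V \setminus [v]_\sim$, which will show $v$ is not critical. The strategy is induction on $\ell$, noting at the outset that $v \notin \tHz$ since $v$ is almost tangible (Remark~\ref{notghost}).

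The base cases are short. For $\ell = 1$: the relation $v \lmodh \a_1 w_1$ combined with almost tangibility of $v$ forces $\a_1 w_1 \in \tT_e v$, and invertibility of $\a_1$ (as $F$ is a supertropical semifield) then gives $w_1 \in [v]_\sim$, contradicting our hypothesis; hence $\ell = 1$ actually cannot occur. For $\ell = 2$: simply set $v_1' := \a_1 w_1$ and $v_2' := \a_2 w_2$; both lie outside $[v]_\sim$, because $[v]_\sim$ is closed under tangible multiples while $w_1, w_2 \notin [v]_\sim$.

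The main obstacle is the inductive step $\ell \ge 3$. The natural candidate split $v_1' := \a_1 w_1$, $v_2' := \sum_{i=2}^{\ell} \a_i w_i$ has $v_1' \notin [v]_\sim$ automatically, but $v_2'$ might accidentally land in $[v]_\sim$. The key observation for that degenerate case is that if $v_2' = \gamma v$ for some $\gamma \in \tT$, then $v = \sum_{i=2}^{\ell} (\gamma^{-1}\a_i)\, w_i$ is a genuine equality — not merely a $\lmodh$-relation — giving a tropical spanning of $v$ by the strictly fewer elements $w_2,\dots,w_\ell$ of $V \setminus [v]_\sim$, so the inductive hypothesis supplies the desired decomposition. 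This reduction-by-one-term maneuver, which converts the obstruction $v_2' \in [v]_\sim$ into a shorter spanning relation, is the crux of the argument.
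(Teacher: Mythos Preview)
Your proof is correct and follows essentially the same approach as the paper's: both argue by induction on the number of terms in the spanning relation, split off the first term $v_1 = \a_1 w_1$, and use the observation that if the remainder $v_2 = \sum_{i\ge 2}\a_i w_i$ happens to lie in $[v]_\sim$ then one obtains a strictly shorter spanning relation for $v$. The only difference is cosmetic: the paper assumes $v$ is critical and derives a contradiction (so $v_2 \in [v]_\sim$ is forced at every stage), whereas you prove the contrapositive directly and treat the case $v_2'\notin [v]_\sim$ as an immediate witness that $v$ is not critical.
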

\begin{proof} $(\Rightarrow)$ Suppose on the contrary that $v \lmodh \sum _{i=1}^t \al_i
w_i$; by definition of criticality, $t>1$. Then taking $v_1 = \a_1
w_1$ and $v_2 = \sum _{i=2}^t \al_i w_i$, we must have $v_2 \in
[v]_\sim,$ and conclude by induction on $t.$
\end{proof}


 Clearly a tropical critical set of a vector space $V$ is  projectively  unique, but could be empty.

\begin{example}\label{onemoreex} $ $
\begin{enumerate} \eroman
    \item The standard base $ \{ \varepsilon_1, \dots, \varepsilon_n\}$ of $\Rz  ^{(n)}$ is also its
tropical critical set. \pSkip

    \item The  tropical critical set of the subspace $W = \Rz^{(2)} \setminus ( [\varepsilon_1]_\sim \cup
    [\varepsilon_2]_\sim)$ is empty. \pSkip

 \item  $W= \Rz^{(2)}\setminus [\varepsilon_1]_\sim$ has the  tropical critical set $ [\varepsilon_2]_\sim$,
but has no s-base.

\end{enumerate}

\end{example}

Despite the last two examples, some positive information is
available.
\begin{lem}\label{enoughspan1}
 Any tropical spanning  set $\gen$ contains a  tropical  critical set of $ V$.
\end{lem}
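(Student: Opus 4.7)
The plan is to show that for each projective equivalence class $[v]_\sim$ of critical elements, the spanning set $\gen$ must meet $[v]_\sim$, and then pick one representative from $\gen$ for each such class. Fix a critical $v\in V$. By the proposition preceding Lemma~\ref{enoughspan}, $v$ is \atangible, so Lemma~\ref{enoughspan} applies. Since $\gen$ tropically spans $V$, there exist $\alpha_1,\dots,\alpha_k\in\tT$ and $s_1,\dots,s_k\in\gen$ with
\[
v\lmodh \sum_{i=1}^k \alpha_i s_i.
\]
If every $s_i$ belonged to $V\setminus[v]_\sim$, this would directly contradict Lemma~\ref{enoughspan}. Hence some $s_{i_0}\in\gen\cap[v]_\sim$, i.e.\ $s_{i_0}=\alpha v$ for a tangible $\alpha$.

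Next I would verify that criticality is a projective invariant, so that the chosen representative $s_{i_0}$ in $\gen$ is itself critical (and not merely a tangible multiple of some critical element living outside $\gen$). Suppose $s=\alpha v$ with $v$ critical and $\alpha\in\tT$, and assume for contradiction $s\lmodg v_1+v_2$ with $v_1,v_2\in V\setminus[s]_\sim=V\setminus[v]_\sim$. Since $F$ is a supertropical semifield and $\tHz$ is an $R$-submodule, multiplying the witness $s=v_1+v_2+h$ (with $h\in\tHz$) by $\alpha^{-1}$ gives $v\lmodg \alpha^{-1}v_1+\alpha^{-1}v_2$; and $\alpha^{-1}v_j\in[v_j]_\sim\subseteq V\setminus[v]_\sim$, contradicting criticality of $v$. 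Thus $s_{i_0}$ is critical.

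Finally, for each projective class $C$ of critical elements, choose (using the Axiom of Choice if $V$ is not finitely generated) one element $s_C\in\gen\cap C$, which exists by the first paragraph. The collection $\{s_C\}$ is a set of representatives of all critical elements of $V$, hence a tropical critical set in the sense of Definition~\ref{def:critical}, and by construction it is contained in $\gen$. There is no real obstacle here: the whole argument reduces to the characterization of criticality in Lemma~\ref{enoughspan}, with the only subtlety being the routine verification that criticality descends to projective equivalence classes.
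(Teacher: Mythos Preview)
Your proof is correct and follows essentially the same line as the paper's: show that every critical $v$ is tropically spanned by $\gen$, then invoke Lemma~\ref{enoughspan} to force some element of $\gen$ into $[v]_\sim$. The paper's argument is compressed into two sentences; you have simply filled in the details (noting that critical elements are \atangible, checking that criticality is a projective invariant, and making the choice of representatives explicit).
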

\begin{proof}  Suppose $v \in V$ is critical.   By
hypothesis on $\gen$, $v$ is tropically spanned by $\gen$ but, by
Lemma~\ref{enoughspan}, it must be an element of $\gen$ (up to
projective equivalence).
\end{proof}

\begin{thm}\label{thm:uniqueGenSet} Suppose $V$ has an s-base $\gen$.
Then $\gen$ is precisely the  tropical  critical set of $ V$.
\end{thm}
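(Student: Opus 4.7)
The theorem asserts a set equality, which I would establish by two inclusions. The easier direction---that each critical element of $V$ is projectively equivalent to an element of $\gen$---is exactly Lemma~\ref{enoughspan1} applied to the spanning set $\gen$. The substantive direction is that every $v\in\gen$ is itself critical, which I would prove by contradiction: supposing $v\in\gen$ is not critical, I aim to show that $\gen\setminus\{v\}$ already tropically spans $V$, violating the minimality of $\gen$. A preliminary consequence of minimality is that $[v]_\sim\cap\gen=\{v\}$ (otherwise some $\alpha v\in\gen$ with $\alpha\in\tT$ could be traded for $v$), so $\gen\setminus\{v\}\subseteq V\setminus[v]_\sim$.

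I would next reduce to the case that $v$ is almost tangible. If $v\in\tHz$, any term $\alpha v$ in a spanning expression lies in $\tHz$ and is absorbed, so $\gen\setminus\{v\}$ still spans. If $v\notin\tHz$ fails to be almost tangible, the definition yields $w\in V$ with $v\lmodh w$ and $w\notin\tT_e v$; a brief argument via Lemma~\ref{gsprop} rules out $w\in[v]_\sim$, placing us in the situation below. Thus, assume $v$ is almost tangible and not critical. By Lemma~\ref{enoughspan},
\begin{equation*}
v\lmodh\sum_i\alpha_i w_i,\qquad\alpha_i\in\tT,\ w_i\in V\setminus[v]_\sim.
\end{equation*}
Since $\gen$ tropically spans $V$, each $w_i\lmodh\sum_j\beta_{i,j}s_{i,j}$ with $s_{i,j}\in\gen$. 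Substituting and using transitivity (Remark~\ref{transit}), then regrouping by elements of $\gen$, yields $v\lmodh\gamma_v v+y$, where $y$ lies in the tropical span of $\gen\setminus\{v\}$ and $\gamma_v\in R$ is the aggregate coefficient of $v$.

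The conclusion follows from a case analysis on $\gamma_v$ via Lemma~\ref{gsprop} (applicable because $v\notin\tHz$). If $\gamma_v\in\tGz$, then $\gamma_v v\in\tHz$ and $v\lmodh y$; if $\gamma_v\in\tT$ with $\gamma_v\nul e$, Lemma~\ref{gsprop}(1) again gives $v\lmodh y$. Both cases express $v$ in the tropical span of $\gen\setminus\{v\}$, contradicting minimality. The delicate remaining subcase is $\gamma_v\in\tT_e$; here, componentwise analysis at any tangible nonzero component of $v$ (which exists by Remark~\ref{notghost}) forces $\gamma_v=\rone$ via the bipotence of $F$, so the substitution collapses into the tautology $v\lmodh v$. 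To circumvent this, I would rechoose the decompositions $w_i\lmodh\sum_j\beta_{i,j}s_{i,j}$ so as to minimize the $\nu$-value of the resulting $\gamma_v$, via a saturation argument in the spirit of Theorem~\ref{sat1} together with Lemma~\ref{sumofsat}; such a minimization should force $\gamma_v$ out of $\tT_e$ into one of the two previously resolved cases.

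I expect the final subcase ($\gamma_v\in\tT_e$) to be the principal obstacle: the substitution can degenerate to a trivial relation, so one must select the decompositions of the $w_i$ carefully so as to preserve the combinatorial content encoded in ``$w_i\notin[v]_\sim$.'' The key technical ingredient is a saturation-style minimization of the substituted coefficient of $v$, rather than any direct structural invariant.
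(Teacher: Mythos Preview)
Your overall architecture matches the paper's: one inclusion is Lemma~\ref{enoughspan1}, and for the other you assume some $v\in\gen$ is not critical and aim to contradict minimality. Where your argument diverges, and where the gap lies, is in the handling of the case $\gamma_v\in\tT_e$.

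The paper does \emph{not} try to push the aggregate coefficient of $v$ out of $\tT_e$ by rechoosing decompositions. Instead it keeps the original two-term equality $v=v_1+v_2$ with $v_1,v_2\notin[v]_\sim$ (coming straight from the definition of ``not critical''), expands each $v_j$ through $\gen$ as $v_j\lmodh\alpha_j v+x_j$, and then---in the case $\alpha_1\in\tT_e$---iterates the relation $v=\alpha_1 v+x_1+v_2$ back into itself to deduce $v_1=\alpha_1 v$ (or, when both $\alpha_j\in\tT_e$, that $v_1=v$). This directly contradicts $v_1\notin[v]_\sim$. The point is that the contradiction is obtained at the level of $v_1$, not at the level of $v$; the two-term structure is essential for this bookkeeping.

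Your route through Lemma~\ref{enoughspan} discards that two-term structure, and the proposed ``saturation-style minimization of $\gamma_v$'' is not a proof: there is no mechanism offered that would actually force $\gamma_v\notin\tT_e$, and in general nothing prevents each $w_i$ from having $v$ appear with coefficient $\nucong\rone$ in every spanning expression. Your own remark that the substitution may collapse to the tautology $v\lmodh v$ is exactly the obstruction, and minimizing over decompositions does not evade it. (As a side issue, your ``componentwise analysis'' presupposes $V\subseteq F^{(n)}$, which the theorem does not assume.) The fix is to abandon the minimization idea and instead imitate the paper's iterated-substitution computation on the two summands $v_1,v_2$ directly.
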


\begin{proof}
In view of Lemma~\ref{enoughspan1}, it remains to show that each
element of  $\gen$ is critical. Suppose $v \in \gen$ is not
critical. Then $v = v_1 + v_2$ where $v_1, v_2 \notin \tT v.$
Thus, when we write $$v_1 = \sum \a _{1,i} s_{1,i} + w_1 \quad
\text{ and } \quad v_2 = \sum \a _{2,i} s_{2,i} + w_2$$ for $\a
_{1,i},\a _{2,i}\in \tT$ and $w_1,w_2 \in \tHz,$ we must have $v$
appearing in one of the sums (for otherwise $v=  v_1 + v_2$ is
tropically spanned by the other elements of $\gen$, contrary to
hypothesis).

Thus, we may assume $s_{1,1} = v,$ and  we have
 $$v_1 \lmodh \a_1 v + \sum_{i\ne 1} \a _{1,i} s_{1,i}  $$ and
 similarly  $v_2\lmodh \a_2 v + \sum_{i\ne 1} \a _{2,i} s_{2,i}  $.
 (Formally, we permit $\a_2 = \fzero.$)  We also write $v_j = \a_j + x_j$ where
   $x_j\lmodh \sum_{i\ne 1} \a_{j,i} s_{j,i}.$

 Now $$v = v_1+v_2 =
\beta v + x,$$ where $\beta = \a _{1,1} + \a _{2,1}$ and $x =
x_1+x_2.$ But then $\beta \le_\nu  e,$ by Lemma~\ref{gsprop},
which also says that if $\beta <_\nu e,$ then $v \lmodg x,$
contrary to $S$ being an s-base. Thus, we may conclude that $\beta
\nucong  e.$ By symmetry, we assume that $\a_1 \nucong e.$ If
$\a_2 <_\nu e,$ then $v_2 \lmodh x_2,$ and
$$
\begin{array}{lll}
v & = v_1 + v_2 = \a_1v + x_1 +v_2 \\ & = \a_1(\a_1v + x_1 +v_2) +
x_1 +v_2 \\ & = \a_1^2 v + (\a_1+\fone ) (x_1+v_2) \\ &  = \a_1^2
v + e( x_1 +v_2),\end{array}
$$
and thus $$
\begin{array}{lll}
v _1 & = \a_1^2 v + e( x_1 +v_2) \\ & = \a_1(\a_1^2 v + e( x_1
+v_2))+  x_1 +v_2  \\ & = \a_1^3v +e( x_1 +v_2) \\ & = \a_1(\a_1^2
v +e( x_1 +v_2))= \a_1 v.\end{array}  $$ Thus, we are done for
$\a_2 <_\nu e,$ and may assume that $\a _2 \in \tT_e.$ Then $$v =
(\a_1 + \a_2) v + x_1 +x_2 = ev +x,$$ implying $v = ex \in \tH$
and thus $\a_j v = \a_j e v = ev$ for $j = 1,2.$ Hence
$$v = v_1 + v_2 = \a_1v + x_1 +\a_2v + x_2 = (\a_1 +  \a_2)v +x =
ev +x,$$ and thus $v = ev + ex $ by Lemma~\ref{gsprop}, implying
$$
\begin{array}{lll}
v_1  & = ev + x_1 = ev+ex + x_1 \\ & = ev + ex_1 + ex_2 + x_1 \\
&  = ev + ex_1 + ex_2 = ev + ex =v.\end{array}
$$
\end{proof}

This theorem is generalized in
\cite{IzhakianKnebuschRowen2010Modules}.

\begin{cor}\label{cor:uniqueGenSet}
The s-base  (if it exists)  of a supertropical vector space is
  unique  up to multiplication by
tangible elements of $R$, and is
 comprised of \atangible\ elements.
\end{cor}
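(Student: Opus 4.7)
The plan is to deduce the corollary directly from Theorem \ref{thm:uniqueGenSet}, using results already established in the excerpt. First I would invoke Theorem \ref{thm:uniqueGenSet} to identify any s-base $\gen$ with the tropical critical set of $V$, which, by Definition \ref{def:critical}, is a choice of one representative from each projective equivalence class $[v]_\sim$ of critical elements. Since the collection of critical elements itself is an intrinsic invariant of $V$ (not depending on $\gen$), any two s-bases pick representatives from the same family of $\sim$-classes, and thus differ only by scalar multiplication by tangible elements of $R$; this gives uniqueness up to tangible scalar multiples.

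Next, for the ``almost tangible'' assertion, I would appeal to the Proposition immediately preceding Lemma \ref{enoughspan}, which says that every critical element of $V$ is \atangible. Combined with the identification $\gen = $ (tropical critical set) from Theorem \ref{thm:uniqueGenSet}, this immediately yields that every element of the s-base $\gen$ is \atangible.

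There is essentially no obstacle here since the heavy lifting was done in Theorem \ref{thm:uniqueGenSet} and the \atangible\ property of critical elements; the corollary is really just a repackaging of these two facts together with Definition \ref{def:critical}. The only minor point to be careful about is to note explicitly that \textbf{projectively equivalent} means $v = \a w$ for some tangible $\a \in R$, so that ``unique up to multiplication by tangible elements'' is literally the statement that any two s-bases are obtained from each other by replacing each element by a tangible scalar multiple. No calculation is required beyond citing the relevant earlier results.
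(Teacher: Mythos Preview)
Your proposal is correct and matches the paper's intended argument: the corollary is stated without proof in the paper, as it follows immediately from Theorem~\ref{thm:uniqueGenSet} (identifying any s-base with the tropical critical set, which is intrinsic to $V$ up to choice of projective representatives) together with the Proposition that critical elements are \atangible. Your observation that ``projectively equivalent'' means precisely ``differ by a tangible scalar'' is the only point worth making explicit, and you have done so.
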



By Corollary \ref{cor:uniqueGenSet}, we have the following
striking result:
\begin{thm}\label{cor:uniquebase} The s-base (if it exists) of a supertropical vector  space is
 unique up to multiplication by scalars.
\end{thm}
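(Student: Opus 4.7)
The plan is to read the statement off directly from the two results that immediately precede it. The heavy lifting has already been done in Theorem~\ref{thm:uniqueGenSet}, which identifies any s-base $\gen$ of $V$ with \emph{the} tropical critical set of $V$, namely a system of representatives for the projective equivalence classes $[v]_\sim$ of critical elements.

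First I would observe that both the notion of critical element (Definition~\ref{def:critical}) and the relation $v \sim w$ (meaning $v = \a w$ for some $\a \in \tT$) are defined purely in terms of $(V,\tHz)$ and involve no choice of s-base. Hence any two s-bases $\gen$ and $\gen'$ of $V$, being tropical critical sets, must each select exactly one representative from each projective-equivalence class of critical vectors. This produces a canonical bijection $\gen \to \gen'$ sending $v \in \gen$ to the unique $v' \in \gen'$ with $v' \sim v$, i.e.\ $v' = \a v$ for some tangible $\a \in \tT$, which is exactly the statement of the theorem.

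The main technical obstacle --- the converse direction that every element of a minimal tropical spanning set is automatically critical --- was already surmounted inside Theorem~\ref{thm:uniqueGenSet} via the case analysis on whether the decomposition coefficients satisfy $\a_1 \nucong e$ or $\a_2 <_\nu e$, together with the application of Lemma~\ref{gsprop}. Once that identification is in place, the present theorem amounts to a cosmetic rephrasing of Corollary~\ref{cor:uniqueGenSet}. It is worth noting that the ``scalars'' in the statement can in fact be taken to be tangible, since Corollary~\ref{cor:uniqueGenSet} also guarantees that every element of an s-base is \atangible, so the proportionality constants necessarily lie in $\tT$ (otherwise $\a v$ would be a nonzero ghost, contradicting Remark~\ref{notghost}).
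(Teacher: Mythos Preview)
Your proposal is correct and matches the paper's approach exactly: the paper presents this theorem as an immediate consequence of Corollary~\ref{cor:uniqueGenSet} (which in turn rests on Theorem~\ref{thm:uniqueGenSet}), with no additional argument. Your write-up simply makes explicit the bijection between two choices of representatives for the critical classes, which is precisely the content of that corollary.
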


\begin{example}\label{rmk:cbase} The only s-bases of the supertropical vector space
 $V = \Rz  ^{(n)}$ are its
classical bases $S = \{ \a_1\varepsilon_1, \dots, \a_n
\varepsilon_n\}$, where $\al_1,\dots,\al_n \in \tT$.
\end{example}

One also has the following tie between critical sets and s-bases.

\begin{prop}\label{thm:uniqueGenSet1} 
Any critical set  $\crit$ of  a supertropical vector space $V$ is
an s-base  of the subspace $W$ tropically spanned by $\crit$.
\end{prop}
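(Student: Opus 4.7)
The plan is to verify directly the two conditions making up the definition of s-base: that $\crit$ tropically spans $W$, and that no proper subset of $\crit$ does. The first is immediate from the definition of $W$, so the real content is minimality.

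For minimality, I would argue by contradiction. Suppose some proper subset $\crit' \subsetneq \crit$ still tropically spans $W$, and pick any $v \in \crit \setminus \crit'$. Since $v \in W$ and $W$ is tropically spanned by $\crit'$, there exist tangible scalars $\a_1, \dots, \a_t \in \tT$ and elements $w_1, \dots, w_t \in \crit'$ with
\[
v \lmodh \sum_{i=1}^t \a_i w_i.
\]
Because $\crit$ is a tropical critical set, its elements are representatives of \emph{distinct} projective equivalence classes, so $[w_i]_\sim \ne [v]_\sim$ for each $i$; that is, every $w_i$ lies in $V \setminus [v]_\sim$.

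Now I would invoke the preceding proposition to conclude that $v$ (being critical) is almost tangible, so that Lemma~\ref{enoughspan} applies to $v$. That lemma tells us precisely that a critical almost-tangible element cannot be tropically spanned by elements of $V \setminus [v]_\sim$. The displayed relation above contradicts this, completing the argument.

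I do not expect any serious obstacle here: the result is essentially a repackaging of Lemma~\ref{enoughspan} together with the bookkeeping convention built into the definition of ``tropical critical set'' (one representative per class). The only small care needed is to notice that although $W \subseteq V$, the hypothesis of Lemma~\ref{enoughspan} involves $V \setminus [v]_\sim$, and since $W \setminus [v]_\sim \subseteq V \setminus [v]_\sim$ the lemma still rules out the spanning relation that $\crit'$ would force.
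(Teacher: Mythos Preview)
Your proof is correct and follows the same approach as the paper. The paper's argument is simply a compressed version of yours: it notes that $\crit$ spans $W$ by construction, and then asserts that for any $v\in\crit$, ``by definition'' $\crit\setminus\{v\}$ cannot tropically span $v$---which is exactly your contradiction step, with the invocation of Lemma~\ref{enoughspan} (and the almost-tangible proposition) left implicit.
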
\begin{proof} By hypothesis, $\crit$ tropically  spans $W$,
so we need only check minimality. But for any $v \in \crit,$ by
definition, $\crit \setminus \{v\}$ does not tropically span $v$.
\end{proof}

$ $


\subsection{Thick subspaces}
\begin{defn}\label{def:thick}
A subspace $W$ of a supertropical vector space $(V, \tHz)$ is
\textbf{thick} if $\rank(W)= \rank(V).$
\end{defn}
For example,  the subspace $  \a V\subseteq V$ is  thick, for any
$\a \in \tT.$ Likewise, any subspace of $\Rz  ^{(n)}$ containing
$n$~tropically independent vectors is thick.

\begin{rem}\label{def:thick0} By definition, any thick subspace of
a thick subspace of $V$ is thick in $V$. \end{rem}

\begin{rem}\label{def:thick1}
Any thick subspace $W$ of a supertropical vector space  $(V,
\tHz)$ contains a d-base of~ $V$. Indeed, by definition, for $n =
\rank(V),$ $W$ contains a set of $n$ tropically independent
elements, which must be a maximal tropically independent set in
$V$, by definition of rank.

Thus, $V$ is tropically dependent on any thick subspace.
\end{rem}

\begin{example}
There exists an infinite chain of thick subspaces  $W_1
 \subset W_2 \subset \cdots$ of  $V= D(\Real)^{(2)}$,  where $W_k$ is the strip tropically spanned by $\{ (k,0), (0,k) \}$, $k \in
 \Net^+$. Thus, $\{ (k,0), (0,k) \}$ is
 not an s-base of
$D(\Real)^{(2)}$. (One
 could expand this to an uncountable chain by taking $k \in
 \Real^+$.)
\end{example}

\subsection {Change of base matrices}

 We write $P_\pi$ for the permutation matrix whose entry in
the $(i, \pi (i))$ position is $\rone$ (for each $1\le i \le n$)
and $\rzero$ elsewhere. Likewise, we write $\diag\{a_1, \dots,
a_n\}$ for the diagonal matrix whose entry in the $(i,i)$ position
is $a_i$ and $\rzero$ elsewhere, and denote it as $D$.  We call
the product $P_\pi D$ of a permutation matrix and a tangible
(nonsingular) diagonal matrix, with each diagonal entry $\neq
\rzero$, a \textbf{\dper}\ matrix,  and denote it as
$\perm_{\pi;D}$.

Recall from \cite[Proposition 3.9]{IzhakianRowen2008Matrices} that
over a supertropical semifield, a matrix is invertible iff it is a
\dper\ matrix $\perm_{\pi;D}$ with $D$ nonsingular. In particular,
the set of all \dper \ matrices form a group whose unit element is
$I$.

\begin{defn} Given an s-base  $\base = \{v_1, \dots, v_n\}$ and
another s-base $\base' = \{v_1', \dots, v_n'\}$ of $V \subseteq
\Fz  ^{(n)}$, whose respective row matrices   are denoted $A$ and
$A'$, a \textbf{change of base matrix} is a matrix $P$ such that
\begin{equation}\label{eq:changeBase} A' = P A ;
\end{equation}
\end{defn}

\begin{prop}\label{prop:changebase1} The generalized permutation
matrices are the only change of base matrices of s-bases (and thus
classical bases).
\end{prop}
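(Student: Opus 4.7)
The plan is to derive the matrix identity $A' = \perm_{\pi;D} A$ directly from the set-level uniqueness of s-bases, and then argue that no other matrix can serve as a change-of-base matrix. The pivotal input is Corollary~\ref{cor:uniqueGenSet} (equivalently Theorem~\ref{cor:uniquebase}), which guarantees that the s-base of a supertropical vector space is unique up to a permutation of its elements together with multiplication by tangible scalars.

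First I would apply this uniqueness to extract a permutation $\pi \in S_n$ and tangible nonzero scalars $\alpha_1, \dots, \alpha_n \in \tT$ with $v_i' = \alpha_i v_{\pi(i)}$ for each $1 \le i \le n$. Packaging these equalities row by row into the matrices $A$ and $A'$ and setting $D := \diag\{\alpha_1, \dots, \alpha_n\}$ yields $A' = P_\pi D A = \perm_{\pi;D} A$. Since $\perm_{\pi;D}$ is by definition a generalized permutation matrix with nonsingular diagonal $D$, this already exhibits a gpm as a change-of-base matrix from $\base$ to~$\base'$.

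For the converse, given any matrix $P$ with $PA = \perm_{\pi;D} A$, I would invoke \cite[Proposition~3.9]{IzhakianRowen2008Matrices} to note that $\perm_{\pi;D}$ is invertible, reducing to the equation $QA = A$ with $Q = (\perm_{\pi;D})^{-1} P$. The rows $v_i$ of $A$ are critical elements of the s-base $\base$ by Theorem~\ref{thm:uniqueGenSet}, hence \atangible; writing the $i$-th row of $QA = A$ as $\sum_j q_{ij} v_j = v_i$ and applying Lemma~\ref{gsprop} together with Lemma~\ref{enoughspan} (which characterizes a critical element as not being tropically spanned by the remaining s-base elements) forces $q_{ij} = \rzero$ for $j \neq i$ and then $q_{ii} = \rone$. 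Thus $Q = I$ and $P = \perm_{\pi;D}$.

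The main obstacle is the uniqueness step: tropical representations in terms of a tropically independent set are generically non-unique (as illustrated in Example~\ref{rmk:bases2}), so ruling out nontrivial off-diagonal entries of $Q$ cannot be done by mere tropical independence of $\base$ and must exploit the stronger criticality / almost-tangibility of each $v_i$ provided by its being an s-base element.
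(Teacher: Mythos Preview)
Your first paragraph is exactly the paper's (one-line) proof: the uniqueness of s-bases up to permutation and tangible scaling (Theorem~\ref{cor:uniquebase}) immediately produces a generalized permutation matrix $\perm_{\pi;D}$ with $A'=\perm_{\pi;D}A$, and the paper stops there.

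Your second paragraph, however, asserts more than can be established and contains a real gap. From $\sum_j q_{ij}v_j = v_i$ you cannot deduce $q_{ij}=\rzero$ for $j\neq i$, even exploiting criticality. If $q_{ii}=\rone$ and the off-diagonal $q_{ij}$ are $\nu$-small enough that each $q_{ij}v_j$ is dominated componentwise by $v_i$, the equality still holds. Concretely, over $D(\Real)$ take the s-base $v_1=(0,0)$, $v_2=(0,1)$ of the planar strip it tropically spans in $F^{(2)}$; with $A=\left(\begin{smallmatrix}0&0\\0&1\end{smallmatrix}\right)$ the matrix
\[
Q=\begin{pmatrix}0&-2\\-\infty&0\end{pmatrix}
\]
satisfies $QA=A$ yet is not a generalized permutation matrix. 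Lemma~\ref{gsprop} yields nothing here (the diagonal coefficient already equals~$\rone$, so neither case~(1) nor the nontrivial part of case~(2) applies), and Lemma~\ref{enoughspan} only forbids expressing $v_i$ via the \emph{other} $v_j$ alone---it is silent once $v_i$ itself appears in the sum with coefficient~$\rone$. Thus the literal ``only'' in the proposition fails for proper subspaces $V\subsetneq F^{(n)}$; it does hold when $A$ is invertible (for instance $V=F^{(n)}$, the case actually used in Remark~\ref{changebase2} and thereafter), since then $QA=A$ can be cancelled to $Q=I$ directly.
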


\begin{proof}
Immediate by Theorem \ref{cor:uniquebase}.
\end{proof}

\begin{rem}\label{changebase2} It follows from Proposition
\ref{prop:changebase1}, applied to the standard base, that the
matrix $A$ is the matrix of a classical base iff $A$ is a \dper\
matrix.
\end{rem}

\begin{example} Any classical base of $\Rz  ^{(n)}$ (after reordering indices) must
be of the form
 $$
 b_1 = ( r_1,\rzero, \dots, \rzero), \quad b_2 =
(\rzero, r_2 ,\rzero, \dots, \rzero), \quad \dots, \quad b_n =
(\rzero,\rzero, \dots, r_n),  $$ where $r_i \in \tT$ are invertible
and tangible.
\end{example}

\section{Linear transformations of supertropical vector spaces, and the dual space}

Our main goal in this section is to introduce supertropical linear
transformations, and use these to define the dual space with
respect to a d,s-base $\tB$, and to show that it has the canonical
dual s-base given in Theorem~\ref{dualbasethm}; this enables us to
identify the double dual space with $V_\tB$. (A version of a dual
space for idempotent semimodules, in the sense of dual pairs,
leading to a Hahn-Banach type-theorem is given in \cite{CGQ}.)

\subsection{Supertropical maps}

Recall that a \textbf{module homomorphism} $\varphi:V\to V' $ of
modules over a semiring~$R$ satisfies
$$\varphi(v+w) = \varphi(v) + \varphi(w),\qquad \varphi(av) = a\varphi(v), \qquad \forall a \in R, \ v,w \in V.$$

We weaken this a bit in the supertropical theory.

\begin{defn} Given supertropical vector spaces $(V, \tHz)$ and
$(V', \tHz ')$ over a supertropical semifield~$F$, a
\textbf{supertropical map}
$$\tmap : (V,  \tHz)  \ \to \ (V', \tHz ')$$ is a function satisfying
\begin{equation}\label{tamp0}\tmap (v+w)
\lmodh \tmap (v) + \tmap (w),\qquad \tmap (\a v) = \a\tmap (v),
\qquad \forall \a \in F, \ v,w \in V,\end{equation} as well as
\begin{equation}\label{tamp1} \tmap(\tHz) \subseteq
\tHz'.\end{equation}

We write $ \Hom (V, V' )$ for the set of supertropical maps from
$V$ to $V',$ which is viewed as a vector space over $F$ in the
usual way. A supertropical map is \textbf{strict} if it is a
module homomorphism.
 \end{defn}

 The modules over a given semiring with ghosts   form a
category, whose morphisms are the supertropical maps of  modules
with ghosts.

\begin{rem}  The second  condition of \eqref{tamp0} implies
$$\tmap(v^{\nu}) = \tmap(e v) = e \tmap(v) = \tmap(v)^{\nu
'};$$ i.e., $ \tmap  \circ \nu = \nu' \circ \tmap.$

 In case $\tHz =e V,$ the standard ghost submodule,  \eqref{tamp1} follows formally from  \eqref{tamp0}. \end{rem}

\begin{rem} One may wonder why we have required  $\tmap (\a v) = \a\tmap
(v)$ and not just $\tmap (\a v) \lmodh \a \tmap (v)$. In fact,
these are equivalent when $\a \in \tT$, since $F$ is a
supertropical semifield. Indeed, assume that $\tmap (\a v) \lmodh
\a \tmap (v)$ for any $\a  \in \tT$ and $v \in V$. Then also $\a
^{-1} \in \tT$. By hypothesis,
$$\a ^{-1}\tmap (\a v) \lmodh \a ^{-1} \a \tmap (v)= \tmap(v)$$ and
$$\tmap (v) = \tmap (\a ^{-1} \a  v) \lmodh \a ^{-1}\tmap (\a  v),$$ so by
antisymmetry, $\a ^{-1}\tmap (\a v) = \tmap(v),$ implying $\tmap
(\a v) = \a \tmap(v).$\end{rem}

\begin{lem}\label{gsur} If $v \lmodg w$ then $\tmap(v)\lmodg
\tmap(w).$   \end{lem}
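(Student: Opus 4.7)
The plan is a short direct chase through the definitions of $\lmodh$ and the axioms \eqref{tamp0}, \eqref{tamp1}. By the definition of the ghost-surpass relation on vectors, $v \lmodh w$ means there is some $u \in \tHz$ with $v = w + u$. Applying $\tmap$ and using the first condition of \eqref{tamp0}, I obtain
\[
\tmap(v) \;=\; \tmap(w+u) \;\lmodh\; \tmap(w) + \tmap(u).
\]
By \eqref{tamp1}, $\tmap(u) \in \tHz'$, so writing $h := \tmap(u)$ we have $\tmap(w) + \tmap(u) = \tmap(w) + h \lmodh \tmap(w)$ directly from the definition of $\lmodh$ in $V'$.

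To finish, I would invoke transitivity of $\lmodh$: if $a = b + h_1$ and $b = c + h_2$ with $h_1, h_2 \in \tHz'$, then $a = c + (h_1 + h_2)$ and $h_1 + h_2 \in \tHz'$ since $\tHz'$ is a submodule. Applying transitivity to $\tmap(v) \lmodh \tmap(w) + \tmap(u) \lmodh \tmap(w)$ gives $\tmap(v) \lmodh \tmap(w)$, which is the desired conclusion.

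There is really no obstacle here; the only subtle point is making sure one is using the $\lmodh$ relation on $V$ (not $\lmodg$ on $R$, despite the label in the statement), and that one explicitly uses \eqref{tamp1} to keep the ``ghost correction'' $\tmap(u)$ inside $\tHz'$. Essentially the lemma says $\tmap$ is monotone with respect to $\lmodh$, and this is a one-line consequence of the two axioms defining supertropical maps.
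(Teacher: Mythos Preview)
Your proof is correct and follows essentially the same argument as the paper: write $v = w + u$ with $u \in \tHz$, apply $\tmap$, use \eqref{tamp0} to get $\tmap(v) \lmodh \tmap(w)+\tmap(u)$, use \eqref{tamp1} to put $\tmap(u)\in\tHz'$, and conclude by transitivity. The paper compresses this into a single displayed line, but the content is identical.
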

\begin{proof} Write $v = w + w'$ where $w' \in \tHz.$ Then  $$\tmap(v) \lmodg \tmap(w) + \tmap(w') \lmodg \tmap(w).$$
\end{proof}

\begin{lem}\label{gsur1} If $v \ge_\nu w$, then $\tmap(v)\ge_\nu
\tmap(w).$   \end{lem}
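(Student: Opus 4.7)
The plan is to reduce the statement to Lemma \ref{gsur}, which was just established. First, I would reformulate the hypothesis $v \ge_\nu w$ in terms of the ghost surpassing relation: by definition, $v^\nu = w^\nu + x^\nu$ for some $x \in V$ with $x^\nu \in \tHz$, which says exactly that $v^\nu \lmodh w^\nu$ (with the $\tHz$-element being $x^\nu$).

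Next, I would apply Lemma \ref{gsur} to this relation to obtain $\tmap(v^\nu) \lmodh \tmap(w^\nu)$. Using the earlier observation in the Remark that $\tmap \circ \nu = \nu' \circ \tmap$ (which follows from $\tmap(ev) = e\tmap(v)$), this becomes $\tmap(v)^{\nu'} \lmodh \tmap(w)^{\nu'}$. Unpacking the definition of $\lmodh$, we have $\tmap(v)^{\nu'} = \tmap(w)^{\nu'} + u$ for some $u \in \tHz'$.

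The final step converts this into an $\ge_{\nu'}$ statement. The issue is that $u$ need not be of the form $y^{\nu'}$ as required by the definition of $\nuge$, so I would multiply both sides by $e$. Since $\tmap(v)^{\nu'}$ and $\tmap(w)^{\nu'}$ are already ghosts, they are fixed by multiplication by $e$ (as $e^2 = e$), so we obtain
\[
\tmap(v)^{\nu'} \;=\; \tmap(w)^{\nu'} + eu \;=\; \tmap(w)^{\nu'} + u^{\nu'},
\]
which is precisely $\tmap(v) \ge_{\nu'} \tmap(w)$.

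The only mild obstacle is this last normalization: one must notice that the literal definition of $\nuge$ requires the excess term to sit in the image of $\nu'$, not merely in $\tHz'$. This is handled cleanly by the $e$-multiplication trick, which works because in any module with ghosts, every element of the ghost submodule becomes a genuine $\nu$-image once multiplied by $e$, while ghost elements on the left remain unchanged.
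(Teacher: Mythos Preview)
Your proposal is correct and follows essentially the same route as the paper: pass to $\nu$-images, apply Lemma~\ref{gsur} to obtain $\tmap(v^\nu)\lmodh\tmap(w^\nu)$, use $\tmap\circ\nu=\nu'\circ\tmap$, and then convert the ghost-surpass relation between ghosts back into $\ge_{\nu'}$. The paper's proof simply asserts that for ghost elements the relations $\lmodh$ and $\ge_\nu$ coincide, whereas you justify that step explicitly via the $e$-multiplication trick; this is a harmless elaboration rather than a different argument.
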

\begin{proof} By definition,  $v ^\nu \ge w^\nu ,$
implying $\tmap(v)^\nu = \tmap(v^\nu)\ge_\nu \tmap(w^\nu)=
\tmap(w)^\nu.$ (Since they are ghosts, $\tmap(v^\nu)\lmodg
\tmap(w^\nu)$ is the same as $\tmap(v^\nu)\ge_\nu \tmap(w^\nu)$.)
\end{proof}

\begin{prop} If $V=F$, then any supertropical map  $\tmap : V \to V'$ is strict.
\end{prop}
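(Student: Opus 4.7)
The plan is to reduce the verification of additivity to a one-step application of module distributivity, exploiting that $V = F$ is singly generated as an $F$-module by its unit element $\fone$. The key observation I would record first is that the multiplicativity part of~\eqref{tamp0}, applied with $v = \fone$, gives $\tmap(\a) = \tmap(\a \cdot \fone) = \a\,\tmap(\fone)$ for every $\a \in F$. Thus $\tmap$ is completely determined by the single vector $w_0 := \tmap(\fone) \in V'$, via the explicit formula $\tmap(\a) = \a w_0$.

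With this formula in hand, additivity follows immediately from the module axiom $(r_1 + r_2)v = r_1 v + r_2 v$ of Definition~\ref{def:module0}, applied in $V'$ to the vector $w_0$: for any $a,b \in F$ one obtains
$$\tmap(a+b) \;=\; (a+b)\,w_0 \;=\; a w_0 + b w_0 \;=\; \tmap(a) + \tmap(b),$$
so the $\lmodh$-relation in~\eqref{tamp0} is automatically upgraded to genuine equality. Combined with the assumed scalar-multiplication compatibility, this shows $\tmap$ is a module homomorphism, hence strict. Condition~\eqref{tamp1} need not be checked separately, since it follows from the explicit formula together with $\a \in \tGz$ implying $\a w_0 \in \tHz'$ (using that $\tHz'$ is an $F$-submodule containing $e V'$).

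I do not anticipate any real obstacle here: the point is that a rank-one free module over $F$ simply leaves no room for the surpassing slack permitted by the general definition of a supertropical map. The only item requiring a moment of care is to identify scalar multiplication on $V = F$ with the internal product of $F$ used when writing $\a \cdot \fone = \a$, which is part of the standard way $F$ is regarded as a module over itself.
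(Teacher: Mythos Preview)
Your proof is correct and takes a genuinely different route from the paper. The paper argues by a case analysis on the $\nu$-order: it separates the cases $a >_\nu b$ and $a \nucong b$, and in the former case further subdivides according to whether $\tmap(a) >_\nu \tmap(b)$ or $\tmap(a) \nucong \tmap(b)$, invoking Lemma~\ref{gsur1} (preservation of $\ge_\nu$) together with the supertropical addition law to conclude $\tmap(a+b) = \tmap(a) + \tmap(b)$ in each case. Your argument bypasses all of this by observing that $V = F$ is free of rank one over itself, so scalar-multiplicativity alone forces $\tmap(a) = a\,\tmap(\fone)$, after which additivity is a single application of the distributive law in $V'$.

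Your approach is shorter and pinpoints the structural reason for the result: a cyclic module leaves no room for the ghost-surpassing slack allowed by the general definition. The paper's approach, by contrast, works directly with the order structure and does not rely on identifying the $F$-action on $F$ with its internal product; it is closer in spirit to the arguments used elsewhere in the section. One small note: your closing remark about condition~\eqref{tamp1} is unnecessary, since that condition is already part of the hypothesis that $\tmap$ is a supertropical map and plays no role in the definition of strictness.
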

\begin{proof}  We need to show that $\tmap(a+b) = \tmap(a)+\tmap(b)$ for all $a,b \in V.$
First assume that $a >_\nu b$. Then $\tmap(a)\ge _\nu \tmap(b)$.
But $\tmap(a+b)= \tmap(a)$. If $\tmap(a)> _\nu \tmap(b)$, then
$\tmap(a+b)= \tmap(a)+\tmap(b)$. If $\tmap(a)\nucong \tmap(b)$,
then $$\tmap(a) = \tmap(a+b)\lmodg \tmap(a)^\nu,$$ implying
$\tmap(a) \in \tHz'$ and $ \tmap(a+b)= \tmap(a)^\nu
=\tmap(a)+\tmap(b).$

Thus, we may assume that  $a \nucong b$. But then $$\tmap(a+b)=
\tmap(a)^\nu = \tmap(a)+ \tmap(b)$$ since $\tmap(a) \nucong
\tmap(b)$ by Lemma~\ref{gsur1}.
\end{proof}
\begin{rem} There are two advantages that strict supertropical
maps have over supertropical maps. First,
  $(\tmap(V), \tmap(\tHz))$ is a
  submodule of $(V',  \tHz ')$, for any strict supertropical
 map $\tmap :
 V \to V'$, whereas this may not be so for other supertropical
 maps.

 Secondly,
any strict supertropical  map from $\Fz  ^{(n)} \to \Fz  ^{(n)}$
is defined up to ghost surpassing by its action on the standard
base. In particular, the strict supertropical map $\tmap : \Fz
^{(n)}\to \Fz ^{(n)}$ can be described  in terms of $n \times n$
matrices over $F$.  (Proposition \ref{prop:changebase1} shows that
when these maps are onto, the corresponding matrices are
generalized permutation matrices.) Any supertropical map agreeing
with $\tmap$ on the standard base must  ghost surpass  $\tmap$, so
in this sense the  strict supertropical maps are the ``minimal''
supertropical maps with respect to ghost surpassing.
\end{rem}

\begin{defn}\label{def:ghostKer}
Given a supertropical map   $\tmap : V \to V'$ of modules with
ghosts, we define the \textbf{ghost  kernel} $$\Gker (\tmap) :=
\tmap^{\invr}(\tHz') = \{ v\in V: \tmap(v) \in \tHz'\},$$ an
$R$-submodule   of $V$. We say that $\tmap$ is \textbf{ghost
monic} if $\tmap^{\invr}(\tHz') = \tHz.$
\end{defn}

\begin{defn}\label{def:onto}
A  supertropical  map $\tmap  : V \to W$ of vector spaces of rank
$n$ is called
 \textbf{tropically onto} if $\tmap(V)$ contains a d-base of $W$ of rank $n$.
 An \textbf{iso} is a supertropical map that is both ghost monic and
  tropically onto. (Note this need not be an isomorphism in the
  usual sense, since $\tmap$ need not be onto.)
\end{defn}

\begin{rem} The composition of isos is an iso, in view of Remark~\ref{def:thick0}.
\end{rem}



\subsubsection{Linear functionals}

\begin{defn} Suppose $V= (V,\tHz)$ is a vector space
over a supertropical semifield $F$. The set of supertropical maps
$$V^* := \Hom (V, \Fz ),$$
  is called the
\textbf{dual $F$-module} of $V$, and  its elements are called
\textbf{linear functionals};   i.e., any linear functional  $\lfun \in V^*$ satisfies
$$\lfun(v_1 + v_2)  \lmodh \lfun(v_1)  + \lfun(v_2) , \qquad  \lfun(a v_1)
= a \lfun(v_1), \qquad \ell(\tHz) \subseteq \tGz$$ for any $v_1,
v_2 \in V$ and $a \in \Fz $.
\end{defn}

A linear functional $\lfun: V \to \Fz $ is called a \textbf{ghost
functional}  if $\lfun(V) \in \tGz$ for all $v \in V$. We write
$\tHz^* \subset  V^*$ for the subset    of all the ghost linear
functionals; this is the ghost submodule of $V^*$. $(V^*, \tHz^*,
\nu^*)$ is a supertropical module  over $F$, under the natural
operations
$$(\ell_1 + \ell_2)(v) = \ell_1(v) + \ell_2(v), \qquad (a\ell)(v) = a\ell(v), \qquad \nu^*\ell(v) = \ell(v)^\nu,$$
for $ a \in F$, $v \in V$.
\subsection{Linear functionals on subspaces of $F^{(n)}$}

The idea here is to develop a theory of linear functionals for
$n$-dimensional subspaces $V\subseteq \Fz  ^{(n)}$
(Often $V=\Fz ^{(n)}$).

Towards this end, we want a definition of linear functionals that
respects a given d-base $\tB = \{b_1, \dots, b_n\}$  of $V$. We
define the matrix $$A^\Bnb := A^\nb A A^\nb,$$ cf. \cite[Remark
2.14]{IzhakianRowen2009Equations}, and recall that $I_A = A A^\nb$
and $I'_A = A^\nb A$ as defined in  Equations \eqref{eq:Anb} and
\eqref{eq:IA}.
 Since the elements of $\tB$ are tropically
independent, the matrix $A= A(\tB)$ is nonsingular, and so are the
matrices $$I_A = A A^\nb, \quad A^\Bnb = A^\nb A A^\nb = A^\nb
I_A, \quad  \text{and } \  I'_A = A^\Bnb A,$$  as well as $I_A A$
(since $I_A A A^\nb = I_A^2 = I_A$ is nonsingular).

\begin{defn} A d-base $\tB$ is \textbf{closed} if $I_A \tB =
\tB.$\end{defn}

There is an easy way to get a closed d-base from an arbitrary
d-base $\tB.$ From now on we set the matrix $$ A : = A(\tB).$$

\begin{defn}\label{sdbase0} Write $\oA = I_A A$, and let $\otB$ denote the rows of $\oA.$
 Let $$V_\tB : = \{ \oA v : v \in V
\},$$   the thick subspace of $V$ spanned  by $ \otB.$ \end{defn}

$V_\tB$ is the subspace of interest for us, since it is invariant
under the action of the matrix $A$.

\begin{rem} $\otB$ is obviously spanned by $\tB,$ but since $I_A A$ is
nonsingular, $\otB$ also is a d-base of $V$, and clearly $\otB$ is
closed since $I_A^2 =I_A$. Thus, $\tB$ is a d,s-base of $V_\tB.$
\end{rem}

The d-base $\otB$ easier to compute with, since now we have
$$I_{\oA} \oA = \oA.$$ From now on, replacing $\tB$ by $\otB$ if
necessary, we assume that the d-base $\tB$ of $V$ is closed.

 Rather than
dualizing all of $V$, we turn to the space
$$V^*_{\tB} : = \Hom (V_\tB, \Fz ).$$

 Define $L_A \in \Hom (V,V)$ by
$$L_A(v) := A^\Bnb v.$$
We also define the map $\tilL_A: V \to V $ by
$$\tilL_A(v) :=  I_A v.$$

\begin{rem}
$(\tilL_A)^2 = \tilL_A$, and $ \tilL_A$ is the identity on
$V_{\tB}$ since $$I_A (I_A Av) = I_A ^2 A v = I_A  A v . $$

Likewise, $L_A(v) = A^\nb v$ for all $v \in V_\tB.$
\end{rem}

\begin{lem}If $\lfun \in V^*_{\tB}$, then  $\lfun = (\lfun \circ
\tilL_A )\big|_{V_{\tB}}$ on $V_{\tB}$. In other words,
$$V^*_{\tB} = \{ {(\ell \circ \tilL_A)} \big|_{\tB} :
 \ell \in V^*\}.$$  \end{lem}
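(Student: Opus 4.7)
The plan is to exploit the identity $\tilL_A|_{V_\tB} = \id_{V_\tB}$ from the preceding remark, which is really the entire engine behind the lemma. From this, for any $\lfun \in V^*_\tB$ and $v \in V_\tB$, the computation $(\lfun \circ \tilL_A)(v) = \lfun(\tilL_A(v)) = \lfun(v)$ is one line and settles the first sentence of the lemma.

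For the equality $V^*_\tB = \{(\ell \circ \tilL_A)|_{V_\tB} : \ell \in V^*\}$, I would handle the two inclusions separately. The inclusion $\supseteq$ is routine: multiplication by the matrix $I_A$ defines a strict (in particular, supertropical) map $\tilL_A : V \to V$, so for any $\ell \in V^*$ the composition $\ell \circ \tilL_A : V \to F$ is a supertropical map, and its restriction to $V_\tB$ is an element of $V^*_\tB$.

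For the reverse inclusion $\subseteq$, I would, given $\lfun \in V^*_\tB$, build an extension by setting $\ell := \lfun \circ \tilL_A$ as a map defined on all of $V$. The essential point that makes this work is that $\tilL_A$ sends $V$ into $V_\tB$, so that $\lfun$ can legitimately be applied to $\tilL_A(v)$ for every $v \in V$. This follows from the computation $I_A v = I_A^2 v = (I_A A)(A^\nb v) = \oA (A^\nb v) \in \oA V = V_\tB$, using the standing closure of $V$ under the relevant matrix actions (which is built into the remark $\tilL_A^2 = \tilL_A$ on $V$, and in any case holds trivially when $V = F^{(n)}$). Once this is verified, $\ell$ is a supertropical map, and the first part of the lemma delivers $(\ell \circ \tilL_A)|_{V_\tB} = \ell|_{V_\tB} = \lfun$.

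The main potential obstacle is really just confirming $\tilL_A(V) \subseteq V_\tB$, i.e.\ ensuring that $A^\nb v$ lies in $V$ whenever $v$ does; once that closure is in hand, the rest of the argument is formal. Everything else reduces to the identity $\tilL_A|_{V_\tB} = \id_{V_\tB}$ and the elementary fact that compositions of supertropical maps are supertropical.
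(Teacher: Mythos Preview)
Your proposal is correct and follows essentially the same route as the paper: the paper's proof is the single line ``Follows at once from the remark,'' invoking precisely the identity $\tilL_A|_{V_\tB} = \id_{V_\tB}$ that you identify as the engine. You simply spell out in more detail the two inclusions for the displayed set equality, including the point that $\tilL_A(V) \subseteq V_\tB$ so that $\lfun \circ \tilL_A$ is defined on all of $V$; the paper leaves all of this implicit.
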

\begin{proof}  Follows at once from the remark.\end{proof}

\begin{lem} $V^*_{\tB}$ is a supertropical vector space, whose ghost submodule
$\tHz(V^*_{\tB})$ is $\{  f |_ {V_{\tB}} : f \in  \tHz^*\}$.
\end{lem}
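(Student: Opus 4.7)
The plan is to verify the three defining conditions of a supertropical vector space, per Definition~\ref{def:module}: that $V^*_{\tB}$ is an $F$-module under the pointwise operations inherited from $V^*$, that the set $\{f|_{V_{\tB}} : f \in \tHz^*\}$ is an $F$-submodule containing $eV^*_{\tB}$, and that the supertropical axiom $\ell^\nu = \ell'^\nu \Rightarrow \ell + \ell' = \ell^\nu$ holds.

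First I would check that the pointwise sum and scalar multiple of supertropical functionals on $V_{\tB}$ again lie in $V^*_{\tB}$. For $\ell_1,\ell_2 \in V^*_{\tB}$ and $v,w \in V_{\tB}$, one has
$$(\ell_1+\ell_2)(v+w) \;=\; \ell_1(v+w)+\ell_2(v+w)\;\lmodh\; (\ell_1+\ell_2)(v)+(\ell_1+\ell_2)(w),$$
using Remark~\ref{tang2}(ii) to combine the two $\lmodh$ relations. Scalar homogeneity and the preservation of ghosts transfer pointwise, and the $F$-module axioms follow from those of $F$ applied componentwise.

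Next I would identify the ghost submodule. One inclusion is immediate: if $f \in \tHz^*$, then $f(V)\subseteq \tGz$, so $f|_{V_{\tB}}(V_{\tB}) \subseteq \tGz$, making $f|_{V_{\tB}}$ a ghost functional on $V_{\tB}$. For the converse, given a ghost functional $g$ on $V_{\tB}$, I define $\widetilde g := g\circ \tilL_A : V \to \Fz$. Since $\tilL_A$ acts by matrix multiplication and so is strict, sends $V$ into $V_{\tB}$, and restricts to the identity on $V_{\tB}$, the composition $\widetilde g$ is a supertropical map with image in $\tGz$; hence $\widetilde g \in \tHz^*$ and $\widetilde g|_{V_{\tB}} = g$. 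For any $\ell \in V^*_{\tB}$, one has $(e\ell)(v) = \ell(v)^\nu \in \tGz$ for each $v \in V_{\tB}$, so $e\ell$ is itself a ghost functional; this gives $eV^*_{\tB} \subseteq \tHz(V^*_{\tB})$, and closure of $\tHz(V^*_{\tB})$ under the $F$-module operations is inherited from $\tHz^*$.

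Finally, I verify the supertropical axiom pointwise: if $\ell^\nu = \ell'^\nu$ in $V^*_{\tB}$, then $\ell(v)^\nu = \ell'(v)^\nu$ for every $v \in V_{\tB}$, so by the supertropical axiom in $F$ we have $\ell(v)+\ell'(v) = \ell(v)^\nu$, yielding $\ell+\ell' = \ell^\nu$ in $V^*_{\tB}$. The main subtlety lies in the explicit identification of the ghost submodule, which requires extending an arbitrary ghost functional from $V_{\tB}$ to all of $V$; the idempotent projection $\tilL_A$ supplies this extension canonically, and everything else is routine.
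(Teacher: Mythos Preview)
Your proof is correct and follows essentially the same approach as the paper. The paper's argument is a one-liner: given $f' \in \tHz(V^*_{\tB})$, set $f := f' \circ \tilL_A \in \tHz^*$ and note $f|_{V_{\tB}} = f'$, with the reverse inclusion declared obvious; you carry out the identical extension via $\tilL_A$ and additionally spell out the routine module axioms and the supertropical axiom, which the paper leaves implicit.
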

\begin{proof} Suppose $f' \in \tHz(V^*_{\tB}).$ Let $f = f'\circ \tilL_A  \in \tHz^*.$ Then
$f' = f |_ {V_{\tB}}.$ The other inclusion is obvious.
\end{proof}

 \begin{defn}
Given a (closed) d-base $\tB= \{ b_1, \dots, b_n\}$ of $V,$ define
 $ \ep_i: V_{\tB} \to \Fz $ by
 $$ \ep_i(v)    = {b_i} ^\trn  L_A( v),$$
the scalar product of $b_i$ and $A^{\nb}v.$ Also, define $\tB^* =
\{\ep_i : 1 \le i \le n\} $. \end{defn}

When $v$ is tangible, we saw in   Remark~ \ref{sat10} that $$v \
\hsim \ \sum _{i=1}^n  b_i \Inu{\ep_i(v)}$$ is a saturated
tropical dependence relation of $v$ on the $b_i$'s; this is the
motivation behind our definition.

\begin{rem}\label{rmk:linearFun}  $ $
\begin{enumerate} \eroman
    \item

  $ \ep_i$ is a linear functional. Also, by definition,
  $\ep_i(b_j)$ is the $i,j$ position of $A A^\nb = I_A,$ a
  quasi-identity, which implies
  $$\ep_i(b_i) = \rone; \qquad \ep_i(b_j)\in \tG ,\ \forall i\ne j.$$
  Hence, $$\sum _{i=1}^n \a _i \ep_i (b_j) \lmodg \a _j \ep_j (b_j)
  = \a _j.$$

 \item $\sum  b_i \ep_i (v) = A\left( \begin{matrix} \ep_1 (v) \\ \vdots \\  \ep_n (v)
\end{matrix}\right) = A A^\nb  v =
  v,$ for $v \in V_{\tB}$.

\end{enumerate}
  \end{rem}

\begin{thm}\label{dualbasethm} If $R$ is a supertropical semifield and $\tB$ is a closed s-base of $V$,
then $\{\ep_i : 1 \le i \le n\}$ is a closed s-base of
$V^*_{\tB}$.
\end{thm}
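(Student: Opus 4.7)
The plan is to establish that $\tB^* = \{\ep_1,\dots,\ep_n\}$ is tropically independent, tropically spans $V^*_\tB$ with tangible coefficients, and is closed; together these give that $\tB^*$ is a closed d,s-base.

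First I would verify tropical independence. Suppose $\sum_i \a_i \ep_i \in \tHz(V^*_\tB)$ with $\a_i \in \tT$ not all $\rzero$, and evaluate at each $b_j$ to get $\sum_i \a_i \ep_i(b_j) \in \tGz$. By Remark~\ref{rmk:linearFun}(i), the matrix $(\ep_i(b_j))_{i,j}$ is the quasi-identity $I_A$, so each equation has the unique tangible summand $\a_j\cdot\rone$ and otherwise only ghost summands. For the $j$-th sum to be ghost, $\a_j$ must be $\nu$-dominated by some off-diagonal $\a_{i(j)}\ep_{i(j)}(b_j)$ with $i(j)\ne j$. By the pigeonhole principle the digraph $j\mapsto i(j)$ contains a non-loop cycle $j_1\to\cdots\to j_\ell\to j_1$; multiplying the $\nu$-inequalities around this cycle and cancelling $\prod_t \a_{j_t}$ (invertible in the semifield $F$) yields $\rone \le_\nu \prod_t \ep_{j_{t+1}}(b_{j_t})^\nu$, contradicting the cycle-weight estimate in the proof of Lemma~\ref{diag}.

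Next I would verify tropical spanning. For $\lfun \in V^*_\tB$, Remark~\ref{rmk:linearFun}(ii) gives $v = \sum_i \ep_i(v)\,b_i$ for every $v\in V_\tB$; applying the ghost-surpassing additivity and scalar homogeneity of $\lfun$ yields $\lfun(v) \lmodh \sum_i \lfun(b_i)\,\ep_i(v)$, whence $\lfun \lmodh \sum_i \lfun(b_i)\,\ep_i$ in $V^*_\tB$. To obtain a relation with tangible coefficients I would split the sum by whether $\lfun(b_i)$ lies in $\tT$ or in $\tGz$: any term with $\lfun(b_i) = \rzero$ vanishes, and for each nonzero ghost coefficient $\lfun(b_i) = e\,\hnu(\lfun(b_i))$ the summand $\lfun(b_i)\,\ep_i$ lies in $eV^*_\tB \subseteq \tHz^*$ and can be absorbed into the ghost surpass slack. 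The remaining sum has all coefficients in $\tT$ and exhibits $\lfun$ as tropically spanned. Minimality then follows from independence: if $\ep_j \lmodh \sum_{i\ne j} \beta_i\,\ep_i$, evaluation at $b_j$ would force the tangible $\rone$ to ghost-surpass a sum of ghost terms, which is impossible.

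Finally, for closedness, the evaluation embedding $\lfun \mapsto (\lfun(b_1),\dots,\lfun(b_n))$ identifies $A(\tB^*)$ with $I_A$, so closedness reduces to the matrix identity $I_A\,I_A^\nb\,I_A = I_A$; since $|I_A|=\rone$ this amounts to $\adj(I_A) = I_A$, which follows from a minor-by-minor analysis of the quasi-identity using the same cycle estimate of Lemma~\ref{diag}. The main obstacle I anticipate is the bookkeeping in the spanning step: a naive application of linearity produces coefficients in all of $\Fz$ rather than in $\tT$, so extracting a genuine tropical spanning requires careful absorption of the ghost-valued coefficients into $\tHz^*$ while preserving the functional $\lmodh$-relation; the $\adj(I_A)=I_A$ computation for closedness is a smaller but technically similar side task.
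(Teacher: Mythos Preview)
Your proposal is sound and follows the same two-part outline as the paper (spanning via $\a_i=\lfun(b_i)$, then tropical independence), but your independence argument takes a genuinely different route. You evaluate a putative ghost dependence at each $b_j$, observe that the tangible diagonal term $\a_j$ must be $\nu$-dominated by an off-diagonal ghost term $\a_{i(j)}\ep_{i(j)}(b_j)$, and chase the map $j\mapsto i(j)$ around a cycle to contradict the cycle-weight estimate of Lemma~\ref{diag}. The paper instead packages the same evaluation data as the row vector $(\beta_1,\dots,\beta_n)I_A$ being ghost, reads this as a tropical dependence among the $k$ nonzero rows of $D I_A$ (with $D=\diag(\beta_i)$), and derives a contradiction by comparing the resulting rank bound $\le k-1$ against the tangible $k\times k$ diagonal minor $\prod_{i\in\mathcal I}\beta_i$, invoking \cite[Theorem~3.4]{IzhakianRowen2009TropicalRank}. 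Your argument is more elementary and self-contained once Lemma~\ref{diag} is available; the paper's is shorter but imports the rank theorem. You also go beyond the paper's proof by treating tangibility of the spanning coefficients and closedness of $\tB^*$, neither of which the paper addresses explicitly; the obstacle you flag---lifting the pointwise relation $\lfun(v)\lmodg\sum_i\a_i\ep_i(v)$ to a relation $\lfun\lmodh\sum_i\a_i\ep_i$ in $(V^*_\tB,\tHz^*)$---is real and is equally glossed over in the paper.
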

\begin{proof} For any $\lfun \in V_\tB^*$, we write $\a _ i = \lfun(b_i) $,   and then see from Remark
\ref{rmk:linearFun} that $\sum \a_i \ep_i  \lmodg \lfun $ on
$V_{\tB}$.

It remains to show that the $\{\ep_i : i = 1, \dots, n\}$ are
tropically independent.
 If $\sum_{i=1}^n \beta_i \ep_i$ were ghost for some $\beta_i \in \tTz$, we would have $\sum \beta_i  {b_i}^\trn A^\nb$ ghost.
Let $D$ denote the diagonal matrix  $\{\beta_1, \dots, \beta_n\},$
  and let  $\mathcal I = \{ i: \beta_i \ne \rzero \},$ and assume
  there are $k$ such tangible coefficients $ \beta_i$. Then  for any
  $i\notin \mathcal I$ we have $\beta_i = 0$, implying the $i$ row
 of the matrix  $D I_A$ is zero. But
  the sum of the rows of the matrix~$D I_A$ corresponding to indices from
  $\mathcal I$
 would be $\sum \beta_i  {b_i}^\trn   A^\nb$, which is ghost, implying that these $k$ rows of  $D I_A$
are dependent; hence  $D I_A$ has rank $\le k-1$. On the other
hand, the $k$ rows of  $D I_A$ corresponding to indices from
$\mathcal I$ yield a $k \times k$ submatrix of determinant $\prod
_{i \in \mathcal I} \beta_i \in \tT,$ implying its rank $\ge k$ by
\cite[Theorem 3.4]{IzhakianRowen2009TropicalRank}, a
contradiction.
   \end{proof}

  In the view of the theorem, we denote $\tB^* = \{\ep_i : 1 \le i \le n\}$, and call it the
  (tropical)
\textbf{dual s-base} of $\base$.




 Write $V^{**}_\tB$ for $(V^*_\tB)^*$. Define a map $$\Phi: V_\tB \to
V^{**}_\tB,$$  given by  $v \mapsto f_v$, where
$$f_v(\lfun) \ = \ \lfun(v).$$
\begin{rem} Let $v_j$ denote the $j$-th row of $A^\Bnb$, i.e., $v_j = b'_j$.
Since $A A^\nb = I_A$ is a quasi-identity matrix, we see that
$$f_{b_j}(\ep_i) = \ep_i(b_j) = {b_i}^\trn A^\Bnb \, b_j =
\left\{%
\begin{array}{ll}
    \frac{|A|}{|A|}b_i =b_i, & i =j ; \\
    \ghost, & i \neq j . \\
\end{array}%
\right.    $$
\end{rem}
\begin{lem}\label{fundep} Suppose $v = \sum \a_i b_i$,
for $\a_i \in \tT.$ Then  $f_v(\eps_i)\notin  \tHz$ for some $  i
.$\end{lem}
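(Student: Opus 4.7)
The plan is to exploit the fact that each $\ep_i$ is defined by $\ep_i(w) = b_i^\trn A^\Bnb w$, i.e.\ through matrix multiplication alone. Matrix multiplication over a semiring distributes exactly over vector addition, so each $\ep_i$ is in fact a \emph{strict} linear functional: the relation $\ep_i(x+y) \lmodh \ep_i(x) + \ep_i(y)$ is actually an equality. Granting this, Remark~\ref{rmk:linearFun}(i) gives
$$\ep_i(v) \ = \ \sum_{j=1}^n \a_j\,\ep_i(b_j) \ = \ \a_i \ + \ \sum_{j\ne i} \a_j (I_A)_{i,j},$$
where the distinguished summand $\a_i$ is tangible (and nonzero, since $\a_i\in\tT$), while every other summand $\a_j(I_A)_{i,j}$ lies in $\tGz$ because $I_A$ is a quasi-identity.

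Next I argue by contradiction, mimicking the cycle argument in Lemma~\ref{diag}. Suppose $\ep_i(v)\in \tHz$ for every $i$. Since $\a_i$ is tangible, bipotence forces some ghost summand $\a_j(I_A)_{i,j}$ with $j\ne i$ to $\nu$-dominate $\a_i$; in particular $(I_A)_{i,j}\ne\rzero$. Pick such a $j$ for each $i$ and call it $\sigma(i)$. The self-map $\sigma$ of $\{1,\dots,n\}$ has no fixed points, so its functional graph contains a cycle $i_1\to i_2\to\cdots\to i_k\to i_1$ of length $k\ge 2$, along which (reading indices mod~$k$)
$$\a_{i_{t+1}}\,(I_A)_{i_t,\,i_{t+1}} \ \ge_\nu\ \a_{i_t}, \qquad t=1,\dots,k.$$
Multiplying these $k$ inequalities and canceling the common factor $\prod_t \a_{i_t}$, which is legitimate because $\tT$ is a group in the semifield $F$, yields $\prod_{t=1}^k (I_A)_{i_t,\,i_{t+1}} \ge_\nu \rone$.

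This directly contradicts the quasi-identity cycle bound established in the proof of Lemma~\ref{diag}: the weight of any non-loop cycle in the digraph of a quasi-identity has $\nu$-value strictly less than $\rone$. The only conceptual step I expect any friction on is the first --- confirming that $\ep_i$ is honestly additive rather than merely $\lmodh$-additive --- because the supertropical map axioms by themselves guarantee only ghost surpassing; one has to invoke the explicit matrix-product description of $\ep_i$. Once that observation is in hand, the remainder is a short fixed-point-free cycle argument that recycles the key feature of quasi-identities already exploited in Lemma~\ref{diag}.
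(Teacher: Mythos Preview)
Your proof is correct, but it takes a considerably longer route than the paper's. The paper argues in essentially one line via Remark~\ref{rmk:linearFun}(ii): since $v = \sum_i b_i\,\eps_i(v)$ for $v \in V_\tB$, if every $\eps_i(v)$ were ghost then $v$ itself would lie in $\tHz$; but then $v = \sum_i \a_i b_i$ with all $\a_i \in \tT$ would exhibit a tropical dependence among the~$b_i$, contradicting their independence.

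Your approach instead expands $\eps_i(v)$ explicitly through the entries of the quasi-identity $I_A$ (correctly noting that $\eps_i$ is strictly additive, being given by matrix multiplication), and then runs a fixed-point-free cycle argument to produce a non-loop cycle in $I_A$ of weight $\ge_\nu \rone$, contradicting the quasi-identity property isolated in the proof of Lemma~\ref{diag}. This is valid and entirely self-contained, but it effectively reproves from scratch what the reconstruction identity $v = \sum_i b_i\,\eps_i(v)$ already packages. The paper's route is shorter because it invokes the tropical independence of the $b_i$ directly; yours unwinds that independence all the way down to the combinatorics of $I_A$. What your argument buys is independence from Remark~\ref{rmk:linearFun}(ii) and a more explicit picture of \emph{why} some $\eps_i(v)$ must be tangible.
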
 \begin{proof}$(\Leftarrow)$ The assertion is obvious.
$(\Rightarrow)$ Suppose $\eps_i(v) = f_v(\eps_i)\in  \tHz$ for
each $i$. Then $\sum \a_i b_i \in \tHz,$ contrary to the $b_i$
being tropically independent.
\end{proof}

\begin{example}\label{6.23} Suppose $V= \Fz  ^{(n)}$, a supertropical vector space.  The map $\Phi: V \to V^{**}$ is a
vector space isomorphism when $\base$ is the standard base.
\end{example}

\begin{prop} For any $v\in V,$ define $v^{**}\in V^{**}$ by $v^{**}(\ell) = \ell(v).$ The map $\Phi: V_\tB \to V^{**}_\tB $ given by $v \mapsto v^{**}$ is an iso of
supertropical vector spaces.
\end{prop}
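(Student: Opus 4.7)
The plan is to establish the three properties that make $\Phi$ an iso in the sense of Definition~\ref{def:onto}: first that it is a supertropical map, then that it is ghost monic, and finally that it is tropically onto. For the first, given $\ell \in V^*_\tB$, compute $\Phi(v+w)(\ell) = \ell(v+w) \lmodh \ell(v) + \ell(w) = \Phi(v)(\ell) + \Phi(w)(\ell)$, which gives $\Phi(v+w) \lmodh \Phi(v) + \Phi(w)$; the scalar multiplication property $\Phi(\a v) = \a \Phi(v)$ is a strict equality because each $\ell$ satisfies $\ell(\a v) = \a \ell(v)$; and if $v \in \tHz$, then $\ell(v) \in \tGz$ for every $\ell \in V^*_\tB$, so $\Phi(v) \in \tHz(V^{**}_\tB)$.

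For ghost monic, I would exploit the reconstruction formula from Remark~\ref{rmk:linearFun}(ii): for $v \in V_\tB$ one has $v = \sum_i b_i \ep_i(v)$. If $\Phi(v) \in \tHz(V^{**}_\tB)$, then in particular $\ep_i(v) = \Phi(v)(\ep_i) \in \tGz$ for every $i$. Since each scalar $\ep_i(v)$ lies in the ghost ideal $\tGz = eF$, the product $b_i\,\ep_i(v) = b_i^\nu\, \hat{\ep_i(v)}$ lies in $\tHz$ (because $\tHz \supseteq eV$ is an $F$-submodule), and summing, $v = \sum b_i \ep_i(v) \in \tHz$, as needed.

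For tropically onto, the key step is to show that $\{\Phi(b_1), \dots, \Phi(b_n)\}$ is tropically independent in $V^{**}_\tB$; since $V^{**}_\tB$ has rank $n$ by Theorem~\ref{dualbasethm} applied to the closed s-base $\tB^*$, any tropically independent set of cardinality $n$ is automatically a d-base, establishing tropical ontoness. Suppose $\sum_j \a_j \Phi(b_j) \in \tHz(V^{**}_\tB)$ with each $\a_j \in \tT$. Evaluating on $\ep_i$ yields $\sum_j \a_j \ep_i(b_j) \in \tGz$ for each $i$. But $\ep_i(b_j) = (I_A)_{i,j}$ by Remark~\ref{rmk:linearFun}(i), so the column vector $\a = (\a_1, \dots, \a_n)^\trn$ g-annihilates the quasi-identity matrix $I_A$. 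This contradicts the fact that a nonzero tangible vector cannot g-annihilate a nonsingular matrix (Remark after Definition~\ref{def:ghostKer}), since $I_A$ is nonsingular. The main obstacle in executing this plan is checking tropical independence of the $\Phi(b_j)$; phrasing it as g-annihilation of $I_A$ is what makes this clean, and it is essential here that $\tB$ was assumed closed so that $I_A\tB = \tB$ and the reconstruction $v = \sum b_i \ep_i(v)$ holds on all of $V_\tB$.
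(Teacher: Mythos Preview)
Your proposal is correct and follows essentially the same strategy as the paper's proof: both verify ghost monic by evaluating at the dual base $\{\ep_i\}$ and verify tropically onto by showing $\Phi(\tB)$ is a tropically independent set of $n$ elements. Your ghost monic argument via the reconstruction formula $v=\sum_i b_i\,\ep_i(v)$ from Remark~\ref{rmk:linearFun}(ii) is a cleaner variant of the paper's appeal to Lemma~\ref{fundep}, and recasting the independence of $\Phi(\tB)$ as the impossibility of a tangible g-annihilator of the nonsingular matrix $I_A$ is a nice packaging (that fact is stated in the Remark of \S3.1 rather than after Definition~\ref{def:ghostKer}).
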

\begin{proof} $\Phi(\tB)$ is a d-base of $n$ elements, which is ghost injective, since  any non-ghost vector $v = \sum \a_i b_i$ of $\Phi(\tB)$ has some tangible coefficient   $\a _i$, and then $v^{**}(\eps_i) = \a_i \in \tT$. But
by Example~\ref{6.23}, taking the standard classical base, we see
that $V^{**}$ has  rank $n$. Hence any supertropical subspace
having $n$ tropically independent elements is thick.
\end{proof}


 \section{Supertropical bilinear forms}\label{bilform}

The classical way to study orthogonality in vector spaces is by
means of bilinear forms. In this section, we introduce the
supertropical analog, providing some of the basic properties.
Although the tropical literature deals with orthogonality in terms
of the inner product, as described in \cite[\S~25.6]{ABG}, the
supertropical theory leads to a more axiomatic approach.

 The notion of supertropical bilinear form follows the
classical algebraic theory, although, as to be expected, there are
a few surprises, mostly because of the characteristic 2 nature of
the theory \cite{IzhakianKnebuschRowen2010bilinear}. In this
section, we assume that $V$ is a vector space over a supertropical
semifield $F$.

 \subsection{Supertropical bilinear forms}

\begin{defn} \label{12} A (supertropical) \textbf{bilinear form} on supertropical vector spaces $V = (V, \tHz)$
and $V' = (V', \tHz')$ is a function $B : V\times V' \to \Fz$ that
is a linear functional in each variable; i.e., writing $(v,w)
\mapsto B(v,w)$ for $v\in V$ and $w\in V'$, any given $u$ in $V$
and $u' \in V'$, we have linear functionals
$$B( u,\underline{\phantom{w}}\,)\: w \mapsto
B(u,w),\qquad B(\underline{\phantom{w}  }\, ,u')\: v \mapsto
B(v,u'),$$ satisfying $B(V,\tHz') \subseteq \tGz$ and  $B(\tHz,V')
\subseteq \tGz$.  Thus, $$ B(v_1 +   v_2, w_1 +  w_2) \lmodg
B(v_1,w_1) + B(v_1,w_2) + B(v_2,w_1) + B(v_2,w_2) ,$$

$$
B(\a v, w) = \a  B(v ,w )=   B(v ,\a w), $$ for all $\a \in F$ and
$v_i\in V,$ and $w_j\in V'.$

 When $V' = V$, we say that
$B$ is a \textbf{(supertropical) bilinear form} on the vector
space $V.$ We say that a bilinear form $B$ is \textbf{strict} if
$$ \begin{array}{lll}
B(\a_1v_1 + \a_2 v_2, \bt_1w_1 + \bt_2 w_2)   = \\ \qquad
\a_1\bt_1 B(v_1,w_1) + \a_1\bt_2 B(v_1,w_2)  + \a_2\bt_1
B(v_2,w_1) +\a_2\bt_2 B(v_2,w_2) ,\end{array} $$ for all $v_i \in
V$ and $w_i \in V'$.
\end{defn}

We often suppress $B$ in the notation, writing $\langle v,w
\rangle$ for $B(v,w)$. Perhaps surprisingly, one can lift many of
the classical theorems about bilinear forms to the supertropical
setting, without requiring strictness.

\begin{example} There is a natural bilinear form $B:V\times V^* \to
\Fz$, given by $B(v,f) = f(v)$, for $v\in V$ and $f\in
V^*$.\end{example}

 \begin{rem}\label{natmap} $ $
\begin{enumerate} \eroman
    \item There is a natural map $\Phi: V' \to V^*, $ given by $w\mapsto
\bil {\underline{\phantom{w}}}w$. Likewise, there is a natural map
$\Phi: V \to (V')^*, $ given by $v\mapsto \bil
{\underline{\phantom{w}}}v$.

 \pSkip

    \item  For any bilinear form $B$, if $v \lmodh \sum \a _i v_i$ and $w
    \lmodh
\sum \beta _j w_j$, for $\a_i, \beta_j \in \tT,$ then
\begin{equation}\label{0.4} \bil vw \lmodg  \sum _{i,j }  \alpha_i \beta_j\bil {v_i}{w_j} .\end{equation}
\end{enumerate}
\end{rem}

 For the remainder of this
section, we take $V' = V \subseteq  \Fz^{(n)}$, a vector space
over the supertropical semifield $\Fz $, and consider a
(supertropical) bilinear form B on $V$.

\begin{defn} The \textbf{Gram matrix} of vectors $v_1, \dots,
v_k \in V = \Fz^{(n)}$ is defined as the $k \times k$  matrix
\begin{equation}\label{eq:GramMatrix}
\tilG(v_1, \dots, v_k ) = \left( \begin{array}{cccc}
                      \bil {v_1}{v_1} &  \bil {v_1}{v_2} & \cdots & \bil {v_1}{v_k} \\
                      \bil {v_2}{v_1} &  \bil {v_2}{v_2} & \cdots & \bil {v_2}{v_k} \\
                         \vdots & \vdots &  \ddots & \vdots \\
                      \bil {v_k}{v_1} &  \bil {v_k}{v_2} & \cdots & \bil {v_k}{v_k} \\
                        \end{array} \right).
\end{equation}
The set $\{v_1, \dots, v_k\}$ is \textbf{nonsingular} (with
respect to $B$) iff its Gram matrix is nonsingular (see
\S\ref{ses:matrices}).

 The \textbf{Gram
matrix} of $V$ is the Gram matrix of an s-base of $V$.
\end{defn}

\begin{example}\label{ex1} The quasi-identity  \begin{equation}\label{eq:GramMatrix1}
\tilG(v_1 , v_2 ) = \left( \begin{array}{cc}
                    0 & 1^\nu \\
                     -\infty & 0 \\
                        \end{array} \right)\end{equation} (in logarithmic
                        notation) is the Gram matrix of a
                       bilinear form.
                        Note that $$\bil{v_1}{v_2} = 1^\nu > 0^\nu
                        = \bil{v_1}{v_1}+ \bil{v_2}{v_2}.$$
\end{example}

In particular, we have the matrix $\widetilde G = \tilG(b_1 \dots,
b_k ) $, which can be written as $(g_{i,j})$ where $g _{i,j} =
\bil {b_i}{b_j};$  \eqref{0.4}  written in
 matrix
notation becomes \begin{equation}\label{0.5} \bil vw \lmodg v^\trn
\, \tilG w .\end{equation}
 Of course, the
matrix $\tilG$ depends on the choice of tangible  s-base $\tB$ of
$V$, but this is unique up to multiplication by scalars and
permutation, so $\tilG$ is unique up to $P \tilG P^t$ where $P$ is
a generalized permutation matrix. In particular, whether or not
$\tilG$ is nonsingular does not depend on the choice of  s-base.

\subsection{Ghost orthogonality}

Bilinear forms play a key role in geometry since they permit us to
define orthogonality of supertropical vectors. However, as we
shall see, orthogonality is   rather delicate   in this setup.

\begin{defn} We write
 $v \gperp w$
when $\bil{v}{w} \in \tGz$,  that is $\langle w_1,w_2\rangle\lmodg
\fzero$ (cf. Remark \ref{rem:ghostSrp}), and say that $v$ and $w$
are   \textbf{ghost orthogonal}, or   \textbf{g-orthogonal} for
short. Likewise, subspaces $W_1,$ $W_2$ of $V$ are
\textbf{g-orthogonal} if $\langle w_1,w_2\rangle\in \tGz$ for all
$w_i\in W_i.$

A subset $S$ of $V$ is \textbf{g-orthogonal} (with respect to a
given bilinear form) if any pair of distinct vectors from $S$ is
g-orthogonal. The \textbf{(left) orthogonal ghost complement} of
$S$ is defined as
$$\Sp = \{ v\in V: \bil vS \in \tGz \}.$$
\end{defn}

The orthogonal ghost complement $ \Sp$ of any set $S \subset V$ is
a subspace of $V$, and $\tHz \subseteq \Sp $ for any $S \subset
V.$ Note that g-orthogonality is not necessarily a symmetric
relation.

\begin{defn} A subspace $W$ of $V$ is called \textbf{nondegenerate} (with
respect to $B$), if  $W^\gperp \cap W  \subseteq \tHz$. The
bilinear form $B$ is \textbf{nondegenerate} if the space $V$ is
 nondegenerate.

 The \textbf{radical}, $\rad( V)$, with respect to a given bilinear form $B$,
  is defined as $V^\gperp.$ Vectors $w_i$ are \textbf{radically
dependent} if $\sum \a _i w_i \in \rad (V)$ for suitable $\a_i \in
\tTz,$ not all $\fzero.$
\end{defn}

Clearly, $\tHz \subseteq \rad(V)$.

\begin{rem} \label{15} $ $
\begin{enumerate}\eroman
    \item $\rad (V) = \tHz$ when
$V$ is nondegenerate, in which case radical dependence is the same
as tropical dependence. \pSkip
    \item

Any ghost complement $V'$ of $\rad (V)$ is obviously tropically
g-orthogonal to $\rad (V),$ and nondegenerate since $$\rad (V') \
\subseteq \ V'  \cap  \rad (V) \ \subseteq \tHz.$$  This
observation enables us to reduce many proofs to nondegenerate
subspaces, especially when a  Gram-Schmidt procedure  is
applicable (to be described in
\cite{IzhakianKnebuschRowen2010bilinear}).
\end{enumerate}
\end{rem}

\begin{lem}\label{Gramrek} Suppose $\{w_1, \dots, w_m \}$ tropically span
a subspace $W$ of $V$. If $ \sum \bt_i \bil v{w_i} \in \tGz$  for
each $v\in V,$ then
 $\sum_{i=1}^m \bt_i w_i \in W^\gperp.$
\end{lem}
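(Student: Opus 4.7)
The plan is that this is essentially a one-step calculation: apply supertropical bilinearity in the appropriate argument of $B$, then use the fact that ghost-ness is absorbed by the relation $\lmodh$. The tropical spanning hypothesis for $W$ is not needed in the calculation itself; it only frames the conclusion (since, as a subspace, $W$ is automatically contained in $V$).

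First I would fix any $v \in V$ and apply the supertropical linearity of $B$ in the argument corresponding to $\sum_i \bt_i w_i$. Because $B(\underline{\phantom w},v)$ (equivalently $B(v,\underline{\phantom w})$, depending on which slot the hypothesis refers to) is a linear functional, Definition~\ref{12} together with the basic property of supertropical maps yields
\[
\bigl\langle \textstyle\sum_{i=1}^m \bt_i w_i,\, v\bigr\rangle \;\lmodh\; \sum_{i=1}^m \bt_i \bil{w_i}{v},
\]
where I have used only that tangible scalars factor through $B$ exactly (Definition~\ref{12}) and that sums transform up to $\lmodh$ (rather than equality). By hypothesis the right-hand side lies in $\tGz$, and Remark~\ref{surpass1}(ii) tells us that $a \lmodg b$ with $b \in \tGz$ forces $a \in \tGz$; therefore
\[
\bigl\langle \textstyle\sum_{i=1}^m \bt_i w_i,\, v\bigr\rangle \in \tGz \qquad \text{for every } v \in V.
\]

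Second, I would specialize $v$ to range over $W$. Since $W \subseteq V$, the conclusion above holds in particular for every $v \in W$, and by the definition of the orthogonal ghost complement $W^\gperp = \{ u \in V : \bil{u}{W} \in \tGz\}$ this gives $\sum_i \bt_i w_i \in W^\gperp$, as required.

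I do not anticipate any real obstacle. The only point that requires care is that bilinearity in the supertropical setting delivers only $\lmodh$ rather than genuine equality, so one must explicitly invoke Remark~\ref{surpass1}(ii) to push ghostness back through the $\lmodh$ relation; this is the sole non-bookkeeping ingredient in the argument.
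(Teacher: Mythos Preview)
Your argument is correct and is essentially the paper's one-line proof: expand the pairing via supertropical bilinearity, observe that the resulting sum is ghost by hypothesis, and conclude membership in $W^\gperp$. One small alignment point: the hypothesis places the $w_i$ in the \emph{second} slot, so the paper expands $\bil{v}{\sum_i\beta_i w_i} \lmodg \sum_i \beta_i\bil{v}{w_i}$ rather than $\bil{\sum_i\beta_i w_i}{v}$; your parenthetical caveat about ``which slot'' covers this, but your displayed formula as written does not literally match the hypothesis without symmetry.
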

\begin{proof}  $\bil v {
\sum_i \bt_i  w_i} \lmodg \sum_i \bil v {\bt_i w_i} =\sum_i \bt_i
\bil v { w_i}\in \tGz$ for all $v\in W$. Thus, $\sum_i \bt_i w_i
\in W^\gperp$.
\end{proof}

\begin{thm}\label{thm:GramMat} Assume that vectors
 $w_1 \dots, w_k \in V$  span  a nondegenerate subspace $W$ of $V$.
If $ | \tilG(w_1 \dots, w_k ) | \in \tGz$, then $w_1 \dots, w_k $
are tropically dependent.
\end{thm}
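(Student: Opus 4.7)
My plan is to convert the singularity of the Gram matrix $\tilG := \tilG(w_1,\dots,w_k)$ into a tangible ghost relation among the $w_i$'s, and then collapse that relation into a genuine tropical dependence by invoking nondegeneracy of $W$. The argument breaks into three steps: extract tangible coefficients from a row-dependence of $\tilG$; push these coefficients through $B$ to place the corresponding vector $u \in W$ into $W^\gperp$; and finally apply $W \cap W^\gperp \subseteq \tHz$.

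For the first step, since $|\tilG| \in \tGz$ the matrix $\tilG$ is singular, so by \cite[Theorem~3.4]{IzhakianRowen2009TropicalRank} its rows are tropically dependent: there exist $\alpha_1,\dots,\alpha_k \in \tTz$, not all $\rzero$, with $\sum_{i=1}^k \alpha_i \bil{w_i}{w_j} \in \tGz$ for every $j \in \{1,\dots,k\}$. Set $u := \sum_i \alpha_i w_i \in W$. Given any $v \in W$, tropical spanning of $W$ by the $w_j$'s furnishes $\gamma_j \in \tTz$ with $v \lmodh \sum_j \gamma_j w_j$, so by Remark~\ref{natmap}(ii),
$$\bil{u}{v} \lmodg \sum_{i,j} \alpha_i \gamma_j \bil{w_i}{w_j} \;=\; \sum_{j} \gamma_j \Bigl(\sum_{i} \alpha_i \bil{w_i}{w_j}\Bigr) \;\in\; \tGz,$$
whence $\bil{u}{v} \in \tGz$ by Remark~\ref{surpass1}(ii). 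Thus $u \in W \cap W^\gperp$, and nondegeneracy of $W$ forces $u \in \tHz$; since the $\alpha_i$ are tangible and not all $\rzero$, this is the desired tropical dependence of $w_1,\dots,w_k$.

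The main obstacle is really the very first step: one has to know that singularity of a supertropical matrix yields a ghost relation among its rows with \emph{tangible} (not merely ghost) coefficients, since otherwise the resulting relation would be vacuous for the definition of tropical dependence. This is precisely the strength of the cited tropical rank theorem (an $n \times n$ matrix with $n$ tropically independent rows is nonsingular, so the contrapositive delivers a tangible dependence). The remainder is routine bilinearity in one slot of $B$ combined with the definition of nondegeneracy, and could equivalently be packaged through Lemma~\ref{Gramrek}; the only small point to keep straight is the direction of g-orthogonality, which dictates that we work row-wise in $\tilG$ so that $u$ lands in the left-hand slot of $B$.
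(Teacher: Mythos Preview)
Your proof is correct and follows essentially the same route as the paper: extract a tangible row-dependence of $\tilG$ from its singularity (the paper cites \cite[Theorem~6.6]{IzhakianRowen2008Matrices} rather than \cite[Theorem~3.4]{IzhakianRowen2009TropicalRank}, but these give the same conclusion for square matrices), then push the coefficients through $B$ to land $u=\sum_i \alpha_i w_i$ in $W^\gperp$, and invoke nondegeneracy. Your argument is in fact the paper's Lemma~\ref{Gramrek} unpacked inline, as you note yourself; the only cosmetic difference is that you spell out the use of Remark~\ref{natmap}(ii) and Remark~\ref{surpass1}(ii) explicitly, whereas the paper's proof is a three-line appeal to the lemma.
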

\begin{proof}  Write $\tilG = \tilG(v_1, \dots, v_k ).$ By ~\cite[Theorem 6.6]{IzhakianRowen2008Matrices},
 $|\tilG| \in \tGz$ iff the rows of $\tilG$
are tropically dependent. By the lemma, if $|\tilG| \in \tGz$,
then some linear combination of the $v_i$ is in $W^\gperp$. When
$W$ is nondegenerate, this latter assertion is the same as saying
that the $v_i$ are tropically dependent.
\end{proof}

\begin{cor}\label{nondeg1}
If the bilinear form $B$ is nondegenerate on a vector space $V$,
then   the Gram matrix   (with respect to any given supertropical
d,s-base of $V$) is nonsingular.
\end{cor}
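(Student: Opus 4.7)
The plan is to derive this corollary almost immediately as the contrapositive of Theorem~\ref{thm:GramMat}. Let $\tB = \{b_1, \dots, b_n\}$ be the given d,s-base of $V$, and let $\tilG = \tilG(b_1,\dots,b_n)$ be the corresponding Gram matrix.

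First, I would verify that the hypotheses of Theorem~\ref{thm:GramMat} apply with $W = V$ and $\{w_1,\dots,w_k\} = \tB$. Since $\tB$ is an s-base, it tropically spans $V$; and by assumption $B$ is nondegenerate on $V$, so $V$ is a nondegenerate subspace of itself (with $V^\gperp \subseteq \tHz$).

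Next, I would argue by contradiction: suppose $|\tilG| \in \tGz$, i.e., $\tilG$ is singular. Then Theorem~\ref{thm:GramMat} yields that $b_1,\dots,b_n$ are tropically dependent. But $\tB$ is a d-base, hence by definition a tropically independent set, which is the desired contradiction. Therefore $|\tilG|$ is tangible, and $\tilG$ is nonsingular.

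The only nontrivial point is that the statement ``the Gram matrix with respect to any given d,s-base'' is well-posed; this was already noted in the discussion following Example~\ref{ex1}, where it is observed that passing between s-bases only changes $\tilG$ by a conjugation $P\tilG P^t$ with $P$ a generalized permutation matrix, and such a conjugation preserves nonsingularity. So no real obstacle arises: the main work was already done in Theorem~\ref{thm:GramMat}, and the corollary is just a one-line application of its contrapositive to a d,s-base.
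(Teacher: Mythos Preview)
Your proposal is correct and is exactly the argument the paper intends: the corollary is stated without a separate proof precisely because it is the contrapositive of Theorem~\ref{thm:GramMat} applied with $W=V$ and the d,s-base $\tB$ as the spanning set, using that a d-base is by definition tropically independent. Your additional remark about well-posedness of the Gram matrix is accurate but not strictly needed, since the statement concerns ``any given'' d,s-base rather than a canonical one.
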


\begin{rem} In case the bilinear form $B$ is strict, we can strengthen Lemma~\ref{Gramrek} to
obtain:
$$\sum_{i=1}^m \bt_i w_i \in W^\gperp \qquad \text { iff } \qquad  \sum
\bt_i \bil v{w_i} \in \tGz$$ for each $v\in V.$ (Indeed, if
$\sum_{i} \bt_i w_i \in  W^\gperp$, then  $\sum_i \bt_i \bil v
{w_i}  = \bil v { \sum_i \beta_i w_i}  \in \tGz$ for all $i$.)
\end{rem}

 In this case, we can also strengthen Corollary~\ref{nondeg1} to read:

\begin{cor}\label{nondeg2} A strict bilinear form $B$ is nondegenerate on a supertropical vector space $V$ iff
the Gram matrix   (with respect to any given supertropical
d,s-base of $V$) is nonsingular.
\end{cor}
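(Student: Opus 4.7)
The plan is to combine the previous corollary with the strengthened version of Lemma~\ref{Gramrek}, reducing nondegeneracy to a statement about vectors g-annihilating the Gram matrix.

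The forward direction ($\Rightarrow$) is immediate from Corollary~\ref{nondeg1}, which already gives one implication without using strictness. So the real work is the converse. Assume the Gram matrix $\tilG = (g_{i,j})$ with $g_{i,j} = \bil{b_i}{b_j}$ is nonsingular, with respect to a fixed d,s-base $\tB = \{b_1,\dots,b_n\}$, and take any $v \in V^\gperp \cap V$; the goal is to show $v \in \tHz$. Since $\tB$ tropically spans $V$, one can write $v \lmodh \sum_{i=1}^n \bt_i b_i$ with $\bt_i \in \tTz$. Using strictness of $B$ together with the strengthened remark preceding the corollary (the ``iff'' version of Lemma~\ref{Gramrek}), the condition $v \in V^\gperp$ translates, after pairing with each $b_j$, into
\[
  \sum_{i=1}^n \bt_i g_{j,i} \in \tGz, \qquad j = 1,\dots,n.
\]
In matrix notation this says that the tangible vector $(\bt_1,\dots,\bt_n)^{\trn}$ g-annihilates $\tilG$.

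At this point I would invoke the key observation recorded after the definition of $\Ann$ in Section~\ref{ses:matrices}: a nonzero tangible vector cannot g-annihilate a nonsingular matrix, because the columns of a nonsingular matrix are tropically independent (cf.\ \cite[Corollary 6.6]{IzhakianRowen2008Matrices}). Since $\tilG$ is nonsingular, we are forced to conclude $\bt_i = \fzero$ for every $i$, so that $v \lmodh \vzero$, i.e., $v \in \tHz$. This yields $V^\gperp \cap V \subseteq \tHz$, proving that $B$ is nondegenerate.

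The main obstacle is bookkeeping the gap between $v$ and the combination $\sum \bt_i b_i$: tropical spanning gives only $v = \sum \bt_i b_i + h$ for some $h \in \tHz$, and one needs to ensure that the ghost adjustment $h$ does not spoil the passage from $\bil{v}{b_j} \in \tGz$ to $\sum \bt_i g_{j,i} \in \tGz$. This is where strictness enters in an essential way: by strictness, $\bil{v}{b_j} = \sum \bt_i g_{j,i} + \bil{h}{b_j}$, and since $\bil{\tHz}{V} \subseteq \tGz$ the residual term $\bil{h}{b_j}$ is a ghost, so the required g-annihilation of $\tilG$ follows. Alternatively, one could bypass this point by first passing to the tangible vector $\sum \bt_i b_i$, which differs from $v$ by an element of $\tHz \subseteq V^\gperp$, and then applying the strengthened Lemma~\ref{Gramrek} directly. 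Either way, the nonsingularity of $\tilG$ does the remaining combinatorial work via the rigidity of nonsingular matrices under g-annihilation.
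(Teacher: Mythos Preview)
Your forward direction is fine; the problem is the converse. You write $v=\sum_i\bt_i b_i+h$ with $h\in\tHz$ and then claim that from $\bil{v}{b_j}\in\tGz$ and $\bil{h}{b_j}\in\tGz$ one gets $\sum_i\bt_i g_{i,j}\in\tGz$. That inference is invalid supertropically: from $a+c\in\tGz$ with $c\in\tGz$ you \emph{cannot} deduce $a\in\tGz$, since a dominating ghost absorbs a smaller tangible summand (e.g.\ $1+2^\nu=2^\nu$ in logarithmic notation while $1\in\tT$). Your alternative---``pass to $\sum_i\bt_i b_i$, which differs from $v$ by an element of $\tHz\subseteq V^\gperp$''---fails for the same reason: there is no subtraction, so $v=w+h$ with $v,h\in V^\gperp$ does not force $w\in V^\gperp$. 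Thus the tangible coefficient vector $(\bt_1,\dots,\bt_n)$ need not g-annihilate $\tilG$ at all, and your appeal to nonsingularity of $\tilG$ never gets off the ground.

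The gap is not merely cosmetic. Take $V=D(\Real)^{(2)}$ with the standard base and the strict form determined by the Gram matrix $\tilG=\left(\begin{smallmatrix}0&0\\ -\infty&0\end{smallmatrix}\right)$ (logarithmic notation). Then $|\tilG|=0\in\tT$, so $\tilG$ is nonsingular; yet $v=(0^\nu,0)$ satisfies $\bil{v}{\varepsilon_1}=0^\nu$ and $\bil{v}{\varepsilon_2}=0^\nu+0=0^\nu$, hence $v\in V^\gperp\setminus\tHz$ and $B$ is degenerate. Writing $v=0\cdot\varepsilon_2+(0^\nu,-\infty)$ gives $\bt=(-\infty,0)$, and indeed $\tilG\bt^\trn=(0,0)^\trn$ is \emph{not} ghost---exactly the failure of your step. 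The paper itself gives no proof of the converse beyond pointing to the strict ``iff'' form of Lemma~\ref{Gramrek}, but that remark only governs elements of $V^\gperp$ of the special shape $\sum_i\bt_i b_i$ with tangible $\bt_i$; it says nothing about mixed tangible/ghost vectors like the one above. So neither your argument nor the paper's implicit one closes the converse direction in this generality.
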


\subsection{Symmetry of g-orthogonality}

In this subsection, we prove the supertropical version of a
 classical theorem of Artin, that any bilinear form
in which g-orthogonality is symmetric must be either an alternate
or symmetric bilinear form.
 In characteristic 2, any alternate
 form is symmetric, so we would expect our supertropical forms to
 be symmetric in some sense.

 \begin{defn} The (supertropical) bilinear form $B$ is \textbf{orthogonal-symmetric} if it satisfies the property for all $v_i,w
\in V$:

\begin{equation}\label{orthsym} \qquad \sum _i \bil{v_i}{w } \in \tGz \quad  \text{iff}\quad
\sum _i \bil{w }{v_i} \in \tGz, \end{equation}
 \noindent for any finite sum taken over $v_i \in V$.

 $B$ is \textbf{supertropically symmetric}  if $B$ is
orthogonal-symmetric and satisfies the additional property that
$\bil vw \nucong \bil wv $ for all $v,w \in V$ satisfying $\bil vw
\in \tT.$

A vector $v \in V$ is \textbf{ isotropic}  if $\bil v v \in \tGz$;
the vector $v$ is \textbf{strictly isotropic}  if $\bil v v =
\fzero$.
 \begin{rem}\label{vertriv} If every $v\in V$ is strictly isotropic,  then the (supertropical) bilinear form $B$ is
 trivial. (Indeed, $$ \fzero = \bil {v+w}{v+w} = \bil {v}{w}+\bil {w}{v}$$ for all
$v,w \in V,$ implying $\bil {v}{w}=\bil {w}{v} = \fzero.$)
\end{rem}
\end{defn}

 \begin{rem}\label{verify0} When the bilinear form $B$ is strict,
Condition~\eqref{orthsym} reduces to the condition

 $$ \bil{v}{w } \in \tGz \quad  \text{iff}\quad \bil{w
}{v} \in \tGz$$ since, taking $v = \sum _i v_i,$ we have
$$ \sum _i \bil{v_i}{w } =  \bil{v}{w }; \qquad  \bil{w}{v } = \sum _i \bil{w }{v_i}.$$

\end{rem}


In general, we need Condition~\eqref{orthsym} to carry through the
proof of Theorem~\ref{orthogsym} below.

 \begin{lem}\label{verify1}
An orthogonal-symmetric bilinear form  $B$ is supertropically
symmetric if it satisfies the condition that $\bil vw + \bil wv\in
\tGz$ for all vectors $v,w \in V$.
\end{lem}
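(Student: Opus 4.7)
The plan is to verify the remaining axiom of supertropical symmetry directly from the hypothesis, via a short case analysis based on bipotence. Fix $v,w \in V$ with $\bil vw \in \tT$, and set $\a := \bil vw$ and $\bt := \bil wv$. We must show $\a \nucong \bt$. By hypothesis, $\a + \bt \in \tGz$, so it suffices to rule out the two alternatives $\a \nug \bt$ and $\a \nul \bt$ given by the trichotomy $\nug, \nul, \nucong$ of the supertropical semifield $F$.

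First, suppose $\a \nug \bt$. By supertropical bipotence, $\a + \bt = \a$; but then $\a \in \tGz$, contradicting $\a \in \tT$. Second, suppose $\a \nul \bt$. Again by bipotence, $\a + \bt = \bt$, so $\bt \in \tGz$; i.e., $\bil wv \in \tGz$. Now apply orthogonal-symmetry \eqref{orthsym} to the singleton family $\{v\}$: since $\bil wv \in \tGz$, we conclude $\bil vw = \a \in \tGz$, again contradicting $\a \in \tT$. The remaining case $\a \nucong \bt$ is exactly what we wanted.

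The argument is essentially a one-line case analysis, so the only subtlety is keeping the orthogonal-symmetry hypothesis available at the right level of generality: we invoke \eqref{orthsym} applied to a single vector $v_1 = v$, which is legitimate since the quantifier in \eqref{orthsym} allows arbitrary finite sums including singletons. No strictness of $B$ is needed (compare Remark~\ref{verify0}), and the bipotence step is the only place where the supertropical axiom $a+b \in \{a,b\}$ whenever $a^\nu \ne b^\nu$ enters. I do not anticipate any obstacle beyond checking these cases; there is no hidden step.
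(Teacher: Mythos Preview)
Your proof is correct and follows essentially the same approach as the paper's. The paper first invokes orthogonal-symmetry to deduce $\bil wv \in \tT$ from $\bil vw \in \tT$, and then observes that two tangible elements whose sum is ghost must be $\nu$-matched; your version unpacks this last step into an explicit bipotence case analysis, deferring the appeal to orthogonal-symmetry until the case $\a \nul \bt$.
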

 \begin{proof}
 If $\bil vw\in\tGz$, then $ \bil wv~\in~\tGz $ by orthogonal-symmetry. Thus, we may assume
  that $\bil vw \in
 \tT$. But then $\bil wv \in
 \tT$ by orthogonal-symmetry; by hypothesis, $\bil
vw + \bil wv\in \tGz$, implying $\bil vw \nucong \bil wv $, as
desired.
\end{proof}

Also, the symmetry condition extends to sums.

 \begin{lem}\label{verify2} If $B$ is supertropically symmetric,
 then $$\sum _i \bil{v_i}{w } \in \tT \quad\text{ iff } \quad \sum _i \bil{w
}{v_i} \in \tT$$ with $\sum _i \bil{v_i}{w }= \sum _i \bil{w
}{v_i} \in \tT$.
 \end{lem}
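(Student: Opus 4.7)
The plan is to decouple the two assertions: the ``iff'' and the equality. The first --- $\sum_i \bil{v_i}{w} \in \tT$ iff $\sum_i \bil{w}{v_i} \in \tT$ --- is immediate from orthogonal-symmetry~\eqref{orthsym} by taking complements in $\Fz = \tT \sqcup \tGz$.

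For the equality, write $a_i := \bil{v_i}{w}$ and $b_i := \bil{w}{v_i}$, and exploit the bipotence of $F$: since $F$ is a supertropical semifield whose restriction $\nu|_\tT$ is injective, a finite sum in $F$ is tangible iff it has a unique, strictly $\nu$-dominant, tangible summand. So from $\sum a_i \in \tT$, I can extract a distinguished index $i_0$ with $a_{i_0} \in \tT$, $a_{i_0} \nug a_j$ for every $j \neq i_0$, and $\sum_i a_i = a_{i_0}$.

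Next I would apply the supertropical symmetry hypothesis to the single pair $(v_{i_0}, w)$ to obtain $b_{i_0} \nucong a_{i_0}$ in $\tT$, hence $b_{i_0} = a_{i_0}$ by injectivity of $\nu|_\tT$. It then remains to verify $b_j \nul b_{i_0}$ for every $j \neq i_0$, for then $\sum_i b_i = b_{i_0} = a_{i_0} = \sum_i a_i$. If $a_j \in \tT$, the single-pair supertropical symmetry again gives $b_j = a_j \nul a_{i_0} = b_{i_0}$, as required.

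The main obstacle is the case $a_j \in \tGz$, where the symmetry axiom on $\bil{\cdot}{\cdot} \in \tT$ gives no direct $\nu$-control on $b_j$. I would resolve this by invoking orthogonal-symmetry~\eqref{orthsym} on the two-element family $\{v_{i_0}, v_j\}$: since $a_{i_0} + a_j = a_{i_0} \in \tT$ (the tangible term strictly dominates the ghost), the sum $b_{i_0} + b_j$ lies outside $\tGz$; combined with $b_{i_0} \in \tT$ and with $b_j \in \tGz$ (obtained from the single-element instance of~\eqref{orthsym}), the only way for $b_{i_0} + b_j$ to be tangible is $b_{i_0} \nug b_j$ strictly. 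This closes the ghost case and completes the argument.
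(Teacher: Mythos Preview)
Your proof is correct. The overall strategy matches the paper's --- reduce via orthogonal-symmetry to the case where both sums are tangible, isolate a dominant summand, and invoke the single-pair symmetry $\bil{v}{w}\nucong\bil{w}{v}$ --- but the execution differs. The paper picks dominant indices on \emph{both} sides, say $i_1$ for the $a_i=\bil{v_i}{w}$ and $i_2$ for the $b_i=\bil{w}{v_i}$, and runs a short sandwich
\[
a_{i_1}\;\ge_\nu\;a_{i_2}\;=\;b_{i_2}\;\ge_\nu\;b_{i_1}\;=\;a_{i_1},
\]
forcing all four to be $\nu$-matched tangibles and hence equal. You instead fix a single dominant index $i_0$ on the $a$-side and argue term by term that every $b_j$ with $j\ne i_0$ is strictly $\nu$-below $b_{i_0}$, splitting into the cases $a_j\in\tT$ and $a_j\in\tGz$ and, in the ghost case, invoking orthogonal-symmetry on the two-element family $\{v_{i_0},v_j\}$. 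The paper's two-index sandwich is slicker (no case split), while your route makes the use of orthogonal-symmetry on subfamilies more explicit; both rely implicitly on $\nu|_\tT$ being injective to pass from $\nu$-matching of tangibles to actual equality.
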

 \begin{proof} We may assume that $\sum _i \bil{v_i}{w }, \sum _i \bil{w
}{v_i} \in
 \tT$, since there is nothing to check if one (and thus the other) is ghost.
 Take $i_1$ such that $\bil{v_{i_1}}{w }$ is the dominant summand of $\sum _i\bil{v_i}{w }$,
 and thus is tangible. Likewise, take $i_2$ such that $\bil{w }{v_{i_2}}$
 is the dominant summand of $\sum _i \bil{w }{v_i}$,
 and thus is tangible. By hypothesis $\bil{v_{i_1}}{w }
 = \bil{w}{v_{i_1}}$ and $\bil{w}{v_{i_2}} = \bil{v_{i_2}}{w }
 $.  Since these dominate their respective sums,
we get $\sum _i \bil{v_i}{w }= \sum _i \bil{w }{v_i} \in \tT$.
  \end{proof}

We aim to prove that an orthogonal-symmetric (supertropical)
bilinear form is supertropically
 symmetric.

Another important property to check is when $\bil{v}{w}+\bil{w}{v}
\in \tGz.$ This  condition means that $v$ is orthogonal to $w$
with respect to the new bilinear form given by $\bil{v}{w}' : =
\bil{v}{w}+\bil{w}{v} ,$ and arises here in several assertions.

\begin{lem}\label{zero1} Suppose that $B$ is an
orthogonal-symmetric  bilinear form and $v,w \in V$. Then either
$\bil{v}{w}+\bil{w}{v} \in \tGz$, or $v$ and $w$ are strictly
isotropic.
\end{lem}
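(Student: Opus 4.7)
The plan is to argue by contradiction, assuming $\bil v w + \bil w v \notin \tGz$ and deducing that both $\bil v v = \fzero$ and $\bil w w = \fzero$.

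First I would apply orthogonal symmetry with the single vector $v_1 = v$ and right argument $w$ to get $\bil v w \in \tGz$ iff $\bil w v \in \tGz$. Since $\tGz$ is closed under addition, both cannot be ghost (else their sum would be), so both are tangible; and since they are tangible with a tangible sum, they cannot be $\nu$-matched. By the symmetry of the conclusion in $v$ and $w$, I may assume $\bil v w \nug \bil w v$, so that $\bil v w + \bil w v = \bil v w \in \tT$.

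Next I would derive the key identity
\[ \bil u v \bil v w + \bil w v \bil v u \in \tGz \qquad \text{for all } u \in V, \]
by applying orthogonal symmetry to the pair $x_1 = \bil u v \cdot w$, $x_2 = \bil w v \cdot u$ with right argument $v$: the ``forward'' sum $\bil{x_1}{v} + \bil{x_2}{v} = (\bil u v \bil w v)^\nu$ is automatically ghost, so the ``swapped'' sum $\bil v{x_1} + \bil v{x_2} = \bil u v \bil v w + \bil w v \bil v u$ lies in $\tGz$. Setting $u=v$ yields $\bil v v(\bil v w + \bil w v) \in \tGz$, and since $\bil v w + \bil w v \in \tT$ (which is a group), this forces $\bil v v \in \tGz$. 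The analogous identity with the roles of $v$ and $w$ interchanged gives $\bil w w \in \tGz$.

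The main obstacle is to upgrade ``isotropic'' to ``strictly isotropic.'' To show $\bil v v = \fzero$, suppose toward a contradiction that $\bil v v$ is a proper ghost; write $\bil v v = \hat g^\nu$ with $\hat g \in \tT$. I would apply orthogonal symmetry to $x_1 = \bil w v \cdot v$, $x_2 = \hat g \cdot w$ with right argument $v$. The forward sum
\[ \bil{x_1}{v} + \bil{x_2}{v} = \bil w v \hat g^\nu + \hat g \bil w v \]
has both summands of common $\nu$-value $\hat g \bil w v$, so the ghost summand dominates and the sum lies in $\tGz$. Orthogonal symmetry then requires
\[ \bil v{x_1} + \bil v{x_2} = \bil w v \hat g^\nu + \hat g \bil v w \in \tGz, \]
but now the strict domination $\bil v w \nug \bil w v$ makes the tangible term $\hat g \bil v w$ strictly dominate the ghost term, giving a tangible sum---contradiction. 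Hence $\bil v v = \fzero$. For $\bil w w = \fzero$ I would run the mirrored argument with the pair $x_1 = \bil w v \cdot w$, $x_2 = \hat h \cdot v$ (writing $\bil w w = \hat h^\nu$) and right argument $w$: this time the forward sum is tangible (since $\bil v w \nug \bil w v$ makes the tangible summand dominate) while the swapped sum is ghost (matched $\nu$-values make the ghost dominate), again contradicting orthogonal symmetry.
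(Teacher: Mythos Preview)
Your proof is correct and takes a genuinely different route from the paper's.

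Both arguments start the same way (reduce to $\bil vw,\bil wv\in\tT$ with $\bil vw \nug \bil wv$), but then diverge. The paper introduces a free tangible parameter $\gamma$ and applies orthogonal-symmetry to the pair $\gamma v,\,w$ with right argument $w$, splitting into cases according to whether $\bil ww$ is tangible, a nonzero ghost, or $\fzero$ (and then ``likewise'' for $\bil vv$). Your argument avoids both the parameter and the case split: the choice $x_1=\bil uv\cdot w$, $x_2=\bil wv\cdot u$ makes the forward sum a doubled element and hence automatically ghost, so orthogonal-symmetry yields $\bil uv\,\bil vw+\bil wv\,\bil vu\in\tGz$ for \emph{all} $u$; specializing $u=v$ (and the mirror identity with $u=w$) immediately gives $\bil vv,\bil ww\in\tGz$. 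For the upgrade to strict isotropy you again pick a single explicit pair rather than a range of $\gamma$'s, so the contradiction is direct. Your approach is more self-contained and sidesteps any implicit density assumption on $\tG$ that a ``choose $\gamma$ strictly between two given values'' argument would need; the paper's approach, on the other hand, is perhaps more intuitive, since varying $\gamma$ makes visible exactly where the two sides $\bil vw$ and $\bil wv$ are forced to match.
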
\begin{proof} One may assume that $\bil vw \in \tT$; hence  $\bil wv \in \tT$.
If $ \bil vw \nucong  \bil wv$ then $\bil{v}{w}+\bil{w}{v} \in
\tGz$, so we may assume by symmetry that $ \bil vw
>_\nu  \bil wv$.

First assume that $w$ is nonisotropic. Then   $\gamma \bil vw
+\bil ww$ is ghost for $\gamma= \frac {\bil ww} {\bil vw}$ and
tangible for  any other tangible $\gamma$ in $F$. But $\gamma \bil
wv +\bil ww $  is ghost for $\gamma = \frac {\bil ww}{\bil wv}$,
contradicting orthogonal-symmetry unless $\bil vw \nucong \bil
wv,$ implying $\bil vw + \bil wv \in \tGz.$

Next assume that $w$ is isotropic but $\bil ww = \a^\nu \ne
\fzero$ for $\a \in \tT$. Then for tangible $\gamma >_\nu \frac
{\bil ww} {\bil vw}$ we see that $\bil {\gamma v}w + \bil w{ w}$
is tangible, so $\bil w{\gamma v}+ \bil w{ w}$ must also be
tangible, which is false if $\gamma <_\nu \frac {\bil ww} {\bil
wv}$. This yields a contradiction if ${\bil wv}<_\nu {\bil vw}$,
and similarly we have a contradiction if ${\bil wv}>_\nu {\bil
vw}$; hence ${\bil wv} \cong_\nu {\bil vw},$ implying $\bil vw +
\bil wv \in \tGz.$

Thus, we may assume that  $\bil ww = \fzero$. Likewise, $\bil vv =
\fzero$, since otherwise we would conclude by interchanging $v$
and $w$.
\end{proof}

We conclude with our supertropical version of Artin's theorem.

\begin{thm}\label{orthogsym} Every orthogonal-symmetric bilinear form~$B$ on a supertropical vector space
$V$  is  supertropically symmetric.
\end{thm}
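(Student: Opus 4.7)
The plan is to reduce the theorem, via Lemma~\ref{verify1}, to the assertion that $\bil vw + \bil wv \in \tGz$ for all $v,w\in V$. By Lemma~\ref{zero1}, this holds except possibly in the case where both $v$ and $w$ are strictly isotropic, and this residual case is where the entire content of the theorem lives.

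In the residual case, I would first dispose of the easy subcases. Applying orthogonal-symmetry in its single-term form shows that $\bil vw \in \tGz$ iff $\bil wv \in \tGz$; if both are ghost, the desired conclusion is immediate. So I may assume $a := \bil vw$ and $b := \bil wv$ are both tangible. If $a \nucong b$, then $a+b = a^\nu \in \tGz$ and we are done. The only subcase requiring work is then $a,b\in \tT$ with (say) $|a|_\nu > |b|_\nu$, from which I aim to derive a contradiction.

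The key pivot is to replace $w$ by $v+w$ and reapply Lemma~\ref{zero1} to the pair $(v, v+w)$. By the $\lmodg$-bilinearity of $B$,
$$\bil{v+w}{v+w} \lmodg \bil vv + \bil vw + \bil wv + \bil ww \;=\; a+b \;=\; a \;\in\;\tT,$$
so $\bil{v+w}{v+w}$ is either $a$ itself or a ghost $\nu$-dominating $a$; in either case it is nonzero, whence $v+w$ is not strictly isotropic. Lemma~\ref{zero1} therefore forces $\bil v{v+w} + \bil{v+w}v \in \tGz$. Since $\bil v{v+w} \lmodg \bil vv + \bil vw = a$ and $\bil{v+w}v \lmodg \bil vv + \bil wv = b$, each of these two $B$-values is either tangible (equal to $a$, respectively $b$) or a ghost above. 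In the ``tangible'' subcase the required sum equals $a+b = a \in \tT$, not a ghost, contradicting the conclusion of Lemma~\ref{zero1}.

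The main obstacle is the remaining subcase in which $\bil v{v+w}$ (equivalently, by single-term orthogonal-symmetry, $\bil{v+w}v$) carries a genuine ghost perturbation making it a ghost element $\ge_\nu a$ (resp.\ $\ge_\nu b$). Here the sum is automatically ghost and Lemma~\ref{zero1} no longer supplies the contradiction. To close this case I would invoke orthogonal-symmetry in its sum form applied to carefully tuned sequences: for a tangible scalar $\la\in F^\times$, consider $v_1 = v$, $v_2 = \la(v+w)$ against $w' = w$, so that
$$\textstyle\sum_i \bil{v_i}{w} \;=\; a + \la\,\bil{v+w}{w}, \qquad \sum_i \bil{w}{v_i} \;=\; b + \la\,\bil{w}{v+w}.$$
The gap $|a|_\nu > |b|_\nu$ together with the lower bounds $\bil{v+w}w \lmodg a$, $\bil w{v+w} \lmodg b$ should allow $|\la|_\nu$ to be tuned so that the first sum remains tangible of $\nu$-value $|a|_\nu$ while the second is ghost, violating orthogonal-symmetry. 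The delicate technical point, and what I expect to be the hard part, is that one must rule out the possibility that the ``extra ghosts'' in $\bil{v+w}w$ and $\bil w{v+w}$ have $\nu$-values paired in exactly the right ratio to frustrate every choice of $\la$; addressing this cleanly will likely require combining several multi-term orthogonal-symmetry relations simultaneously, exploiting that $F$ is a semifield so that all nonzero tangibles are invertible.
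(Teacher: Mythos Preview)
Your reduction via Lemma~\ref{verify1} and Lemma~\ref{zero1} to the situation $\bil vv=\bil ww=\fzero$ with $a:=\bil vw\in\tT$, $b:=\bil wv\in\tT$, $a\not\nucong b$ is exactly how the paper begins.  From that point on your route diverges, and as written it does not close.

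The gap is precisely the one you flag.  In your ``obstacle'' subcase, single-term orthogonal-symmetry forces $\bil{v+w}{w}$ and $\bil w{v+w}$ to be ghost \emph{together}; write them as $d^\nu$ and $f^\nu$ with $d\ge_\nu a$, $f\ge_\nu b$.  Your proposed tuning needs the window $b/f\le_\nu\la<_\nu a/d$, which exists iff $bd<_\nu af$; running the argument with the roles of ``tangible'' and ``ghost'' reversed handles $bd>_\nu af$.  But when $bd\nucong af$ (equivalently $d/f\nucong a/b$) no choice of $\la$ separates the two sums, and nothing in your outline rules this out.  The axioms of a non-strict supertropical bilinear form impose no a~priori relation between the ghost excesses $d$ and~$f$, so this case is real.  (For strict $B$ the obstacle never arises, since then $\bil v{v+w}=a$ on the nose; your argument is complete in that setting, but the theorem is stated for arbitrary supertropical~$B$.)

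The paper avoids this local balancing problem by a global propagation.  The multi-term form of orthogonal-symmetry first yields the observation: whenever $\bil{v'}w\nucong\a$ then also $\bil w{v'}\nucong\beta$ (apply \eqref{orthsym} to $v_1=v$, $v_2=v'$).  Now take an \emph{arbitrary} $v'\in V$; since $\bil{v+v'}{w}\lmodg\bil vw+\bil{v'}w\ne\fzero$, rescale to $v'':=\tfrac{\a}{\gamma}(v+v')$ with $\bil{v''}w\nucong\a$, hence $\bil w{v''}\nucong\beta$, hence $\bil{v''}w+\bil w{v''}\notin\tGz$.  Lemma~\ref{zero1} applied to $(v'',w)$ then forces $\bil{v''}{v''}=\fzero$, i.e.\ $\bil{v+v'}{v+v'}=\fzero$.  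Expanding via $\lmodg$ gives $\bil vv+\bil v{v'}+\bil{v'}v+\bil{v'}{v'}=\fzero$, so in particular $\bil{v'}{v'}=\fzero$.  Thus every vector of $V$ is strictly isotropic, whence $B$ is identically zero by Remark~\ref{vertriv}, contradicting $\a\ne\fzero$.  The key idea you are missing is this passage from the single bad pair $(v,w)$ to the triviality of $B$ on all of~$V$; it replaces your attempted local contradiction by a structural one.
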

\begin{proof}  We are done by Lemma~\ref{zero1} unless there are  vectors $v,w \in V$ for
which $\bil vv = \bil ww = \fzero$ and $\bil vw +\bil wv\in \tT$.

$\a := \bil vw \in \tT$; then $\beta :=\bil wv \in \tT,$ and  $\a
+ \beta \in \tT.$ Observe that, if $v'\in V$ such that $\bil{v'}w
\nucong \a,$ then $\bil w{v'} \nucong \beta$. Indeed, $\bil{v}w
+\bil{v'}w  = \a^\nu,$ implying $\bil w{v}+\bil w{v'} \in \tG.$
But $\bil w{v'}\in \tT,$ so we conclude that $\bil w{v'} \nucong
\beta$.

Now let vector $v'$ be any vector of $ V$. Then  $$\bil {v+v'}w
\lmodg \bil vw + \bil {v'}w \ne \fzero.$$ Thus, $ \bil{v+v'}w
\nucong \gamma$,  for some $\gamma\in \tT$.  Let $v'' := \frac
{\a}{\gamma} (v+v')$. Then $$\bil{v''}w = \frac {\a}{\gamma}\bil{
v+v'}w \nucong \a,$$ and thus $\bil w{v''} \nucong \beta,$ as just
observed. Hence, $\bil{v''}w + \bil w{v''}\notin \tG.$ Now
Lemma~\ref{zero1} yields $\bil{v''}{v''} = \fzero.$ From
$$\fzero = \bil {\gamma v''}{\gamma v''} \lmodg \bil {v}{v}+ \bil
{v}{v'}+ \bil {v'}{v}+\bil {v'}{v'},$$ we conclude that $\bil
{v'}{v'}= \fzero$ for all $v'\in V$; i.e., $B$ is trivial, by
Remark~\ref{vertriv}, which is absurd since $\a = \bil vw \ne
\fzero$. Thus, $B$ must be supertropically symmetric.
\end{proof}


\bibliographystyle{abbrv}


\end{document}